\begin{document}

\newtheorem{theorem}{Theorem}
\newtheorem{lemma}{Lemma}
\newtheorem{proposition}{Proposition}
\newtheorem{rmk}{Remark}
\newtheorem{example}{Example}
\newtheorem{exercise}{Exercise}
\newtheorem{definition}{Definition}
\newtheorem{corollary}{Corollary}
\newtheorem{notation}{Notation}
\newtheorem{claim}{Claim}

\newtheorem{dif}{Definition}

 \newtheorem{thm}{Theorem}[section]
 \newtheorem{cor}[thm]{Corollary}
 \newtheorem{lem}[thm]{Lemma}
 \newtheorem{prop}[thm]{Proposition}
 \theoremstyle{definition}
 \newtheorem{defn}[thm]{Definition}
 \theoremstyle{remark}
 \newtheorem{rem}[thm]{Remark}
 \newtheorem*{ex}{Example}
 \numberwithin{equation}{section}

\newcommand{\vertiii}[1]{{\left\vert\kern-0.25ex\left\vert\kern-0.25ex\left\vert #1
    \right\vert\kern-0.25ex\right\vert\kern-0.25ex\right\vert}}

\newcommand{\R}{{\mathbb R}}
\newcommand{\C}{{\mathbb C}}
\newcommand{\U}{{\mathcal U}}
\newcommand{\cG}{W}
\newcommand{\tcG}{\tilde{\cG}}

\newcommand{\norm}[1]{\left\|#1\right\|}
\renewcommand{\(}{\left(}
\renewcommand{\)}{\right)}
\renewcommand{\[}{\left[}
\renewcommand{\]}{\right]}
\newcommand{\f}[2]{\frac{#1}{#2}}
\newcommand{\im}{i}
\newcommand{\cl}{{\mathcal L}}
\newcommand{\ck}{{\mathcal K}}

\newcommand{\al}{\alpha}
\newcommand{\be}{\beta}
\newcommand{\wh}[1]{\widehat{#1}}
\newcommand{\ga}{\gamma}
\newcommand{\Ga}{\Gamma}
\newcommand{\de}{\delta}
\newcommand{\ben}{\beta_n}
\newcommand{\De}{\Delta}
\newcommand{\ve}{\varepsilon}
\newcommand{\ze}{\zeta}
\newcommand{\Th}{\Theta}
\newcommand{\ka}{\kappa}
\newcommand{\la}{\lambda}
\newcommand{\laj}{\lambda_j}
\newcommand{\lak}{\lambda_k}
\newcommand{\La}{\Lambda}
\newcommand{\si}{\sigma}
\newcommand{\Si}{\Sigma}
\newcommand{\vp}{\varphi}
\newcommand{\om}{\omega}
\newcommand{\Om}{\Omega}
\newcommand{\ra}{\rightarrow}

\newcommand{\ro}{{\mathbf R}}
\newcommand{\rn}{{\mathbf R}^n}
\newcommand{\rd}{{\mathbf R}^d}
\newcommand{\rmm}{{\mathbf R}^m}
\newcommand{\rone}{\mathbb R}
\newcommand{\rtwo}{\mathbf R^2}
\newcommand{\rthree}{\mathbf R^3}
\newcommand{\rfour}{\mathbf R^4}
\newcommand{\ronen}{{\mathbf R}^{n+1}}
\newcommand{\ku}{\mathbf u}
\newcommand{\kw}{\mathbf w}
\newcommand{\kf}{\mathbf f}
\newcommand{\kz}{\mathbf z}

\newcommand{\N}{\mathbf N}

\newcommand{\tn}{\mathbf T^n}
\newcommand{\tone}{\mathbf T^1}
\newcommand{\ttwo}{\mathbf T^2}
\newcommand{\tthree}{\mathbf T^3}
\newcommand{\tfour}{\mathbf T^4}

\newcommand{\zn}{\mathbf Z^n}
\newcommand{\zp}{\mathbf Z^+}
\newcommand{\zone}{\mathbf Z^1}
\newcommand{\zz}{\mathbf Z}
\newcommand{\ztwo}{\mathbf Z^2}
\newcommand{\zthree}{\mathbf Z^3}
\newcommand{\zfour}{\mathbf Z^4}

\newcommand{\hn}{\mathbf H^n}
\newcommand{\hone}{\mathbf H^1}
\newcommand{\htwo}{\mathbf H^2}
\newcommand{\hthree}{\mathbf H^3}
\newcommand{\hfour}{\mathbf H^4}

\newcommand{\cone}{\mathbf C^1}
\newcommand{\ctwo}{\mathbf C^2}
\newcommand{\cthree}{\mathbf C^3}
\newcommand{\cfour}{\mathbf C^4}
\newcommand{\dpr}[2]{\langle #1,#2 \rangle}

\newcommand{\sn}{\mathbf S^{n-1}}
\newcommand{\sone}{\mathbf S^1}
\newcommand{\stwo}{\mathbf S^2}
\newcommand{\sthree}{\mathbf S^3}
\newcommand{\sfour}{\mathbf S^4}

\newcommand{\lp}{L^{p}}
\newcommand{\lppr}{L^{p'}}
\newcommand{\lqq}{L^{q}}
\newcommand{\lr}{L^{r}}
\newcommand{\echi}{(1-\chi(x/M))}
\newcommand{\chip}{\chi'(x/M)}

\newcommand{\wlp}{L^{p,\infty}}
\newcommand{\wlq}{L^{q,\infty}}
\newcommand{\wlr}{L^{r,\infty}}
\newcommand{\wlo}{L^{1,\infty}}

\newcommand{\lprn}{L^{p}(\rn)}
\newcommand{\lptn}{L^{p}(\tn)}
\newcommand{\lpzn}{L^{p}(\zn)}
\newcommand{\lpcn}{L^{p}(\cn)}
\newcommand{\lphn}{L^{p}(\cn)}

\newcommand{\lprone}{L^{p}(\rone)}
\newcommand{\lptone}{L^{p}(\tone)}
\newcommand{\lpzone}{L^{p}(\zone)}
\newcommand{\lpcone}{L^{p}(\cone)}
\newcommand{\lphone}{L^{p}(\hone)}

\newcommand{\lqrn}{L^{q}(\rn)}
\newcommand{\lqtn}{L^{q}(\tn)}
\newcommand{\lqzn}{L^{q}(\zn)}
\newcommand{\lqcn}{L^{q}(\cn)}
\newcommand{\lqhn}{L^{q}(\hn)}

\newcommand{\lo}{L^{1}}
\newcommand{\lt}{L^{2}}
\newcommand{\li}{L^{\infty}}
\newcommand{\beqn}{\begin{eqnarray*}}
\newcommand{\eeqn}{\end{eqnarray*}}
\newcommand{\pplus}{P_{Ker[\cl_+]^\perp}}

\newcommand{\co}{C^{1}}
\newcommand{\ci}{C^{\infty}}
\newcommand{\coi}{C_0^{\infty}}

\newcommand{\ca}{\mathcal A}
\newcommand{\cs}{\mathcal S}
\newcommand{\cm}{\mathcal M}
\newcommand{\cf}{\mathcal F}
\newcommand{\cb}{\mathcal B}
\newcommand{\ce}{\mathcal E}
\newcommand{\cd}{\mathcal D}
\newcommand{\cn}{\mathcal N}
\newcommand{\cz}{\mathcal Z}
\newcommand{\crr}{\mathbf R}
\newcommand{\cc}{\mathbf C}
\newcommand{\ch}{\mathcal H}
\newcommand{\cq}{\mathcal Q}
\newcommand{\cp}{\mathcal P}
\newcommand{\cx}{\mathcal X}
\newcommand{\eps}{\epsilon}

\newcommand{\pv}{\textup{p.v.}\,}
\newcommand{\loc}{\textup{loc}}
\newcommand{\intl}{\int\limits}
\newcommand{\iintl}{\iint\limits}
\newcommand{\dint}{\displaystyle\int}
\newcommand{\diint}{\displaystyle\iint}
\newcommand{\dintl}{\displaystyle\intl}
\newcommand{\diintl}{\displaystyle\iintl}
\newcommand{\liml}{\lim\limits}
\newcommand{\suml}{\sum\limits}
\newcommand{\ltwo}{L^{2}}
\newcommand{\supl}{\sup\limits}
\newcommand{\df}{\displaystyle\frac}
\newcommand{\p}{\partial}
\newcommand{\Ar}{\textup{Arg}}
\newcommand{\abssigk}{\widehat{|\si_k|}}
\newcommand{\ed}{(1-\p_x^2)^{-1}}
\newcommand{\tT}{\tilde{T}}
\newcommand{\tV}{\tilde{V}}
\newcommand{\wt}{\widetilde}
\newcommand{\Qvi}{Q_{\nu,i}}
\newcommand{\sjv}{a_{j,\nu}}
\newcommand{\sj}{a_j}
\newcommand{\pvs}{P_\nu^s}
\newcommand{\pva}{P_1^s}
\newcommand{\cjk}{c_{j,k}^{m,s}}
\newcommand{\Bjsnu}{B_{j-s,\nu}}
\newcommand{\Bjs}{B_{j-s}}
\newcommand{\Ly}{L_i^y}
\newcommand{\dd}[1]{\f{\partial}{\partial #1}}
\newcommand{\czz}{Calder\'on-Zygmund}
\newcommand{\chh}{\mathcal H}

\newcommand{\lbl}{\label}
\newcommand{\beq}{\begin{equation}}
\newcommand{\eeq}{\end{equation}}
\newcommand{\beqna}{\begin{eqnarray*}}
\newcommand{\eeqna}{\end{eqnarray*}}
\newcommand{\bp}{\begin{proof}}
\newcommand{\ep}{\end{proof}}
\newcommand{\bprop}{\begin{proposition}}
\newcommand{\eprop}{\end{proposition}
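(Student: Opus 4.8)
The material reproduced above is entirely the document preamble: package imports, the \texttt{\textbackslash newtheorem} declarations, \texttt{\textbackslash numberwithin}, and a long list of \texttt{\textbackslash newcommand} abbreviations. It breaks off almost immediately after \texttt{\textbackslash begin\{document\}} and well before any title, abstract, section heading, or displayed mathematics. In particular, no theorem, lemma, proposition, or claim has been stated in the excerpt, so there is no mathematical assertion here whose proof could be sketched; writing a ``proof proposal'' at this point would mean inventing a statement rather than engaging with the authors' intended one, which I will not do.

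\textbf{Context, for what it is worth.} The abbreviations do indicate the subject area --- \czz{} operators, Littlewood--Paley pieces such as $P_\nu$ and $P_\nu^s$, the one-dimensional resolvent $(1-\partial_x^2)^{-1}$, the spaces $L^p$, $L^{p,\infty}$ and $L^p(\rn)$, and a linearized operator $\cl_+$ around a solitary wave --- so the eventual result is presumably either an operator-boundedness estimate on $L^p$ or a spectral/coercivity bound for $\cl_+$. But which of these it is, and under what hypotheses and in which norms, cannot be determined from the preamble alone. Accordingly, a faithful and committed plan of proof --- identifying the core quantitative estimate, choosing between a Littlewood--Paley/\czz{} decomposition argument and a spectral-diagonalization argument, and pinpointing the anticipated obstacle (an $L^{1,\infty}$ endpoint in the first case, nonnegativity of a quadratic form on a codimension-finite subspace in the second) --- can only be written once the statement itself is supplied.
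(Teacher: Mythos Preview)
Your assessment is correct: the extracted ``statement'' is not a mathematical proposition at all, but a fragment of the preamble --- specifically the macro definitions \texttt{\textbackslash newcommand\{\textbackslash bprop\}\{\textbackslash begin\{proposition\}\}} and \texttt{\textbackslash newcommand\{\textbackslash eprop\}\{\textbackslash end\{proposition\}\}}. There is no theorem, lemma, or proposition here, and hence nothing to prove; your refusal to fabricate a statement is the only sensible response.
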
}
\newcommand{\bt}{\begin{theorem}}
\newcommand{\et}{\end{theorem}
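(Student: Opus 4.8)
The plan is to prove the claimed estimate by decomposing in frequency, treating the compact ("resonant") frequencies separately from the high frequencies, and using the projection $\pplus$ to remove the non-decaying contribution of $\mathrm{Ker}\,\cl_+$. Concretely, I would split the identity into Littlewood--Paley pieces $1=\sum_{j\ge 0}\vp_j$ and write the operator (resp. the solution) as a sum over $j$ built from the blocks $\Bjs$, $\sj$ already introduced above; the objective is a uniform-in-$j$ bound on each piece together with summability in $j$. It is convenient to first peel off the smoothing factor $\ed$, which regularizes the kernel at each scale and reduces the high-frequency analysis to that of a classical singular integral.

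For the high-frequency pieces I expect the standard toolkit to apply: after isolating the \czz\ part of the kernel adapted to scale $2^{-j}$, a Schur test / $T(1)$-type argument gives the single-scale bound, while integration by parts in the phase on frequency-separated blocks yields almost-orthogonality of the form $\norm{P_jP_k^*}+\norm{P_j^*P_k}\lesssim 2^{-\delta|j-k|}$, so that a Cotlar--Stein argument sums the high-frequency part. The commutators with $\chh$ produced by the cutoffs are lower order at these scales and are absorbed into the error.

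The heart of the matter is the low-frequency block, where the unprojected operator genuinely fails the estimate because of the zero modes of $\cl_+$: there $\pplus$ annihilates exactly those directions, so on the range of $\pplus$ the operator $\cl_+$ has a spectral gap, its resolvent (or the semigroup it generates) decays at the claimed rate, and the finitely many neutral/negative directions having been removed, the low-frequency contribution is bounded. The main obstacle I anticipate is the gluing at unit frequency --- ensuring that the single-scale harmonic-analysis estimates and the spectral estimate for $\cl_+$ are compatible, with constants uniform across the transition region --- together with checking that $\pplus$ commutes, up to an acceptable error, with the Littlewood--Paley truncations (so that projecting and localizing can be interchanged). Once these are in place, interpolation with the trivial $\lt$ bound upgrades the result to the full range of exponents in the statement.
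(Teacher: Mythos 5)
The statement you were given to prove is not actually a theorem of the paper: it is a corrupted fragment of the preamble (the macro definitions for \verb|\bt| and \verb|\et|), and your proposal does not attach to any result that the paper states or proves. The objects your argument is built around --- an operator $\cl_+$, the projection onto $\mathrm{Ker}[\cl_+]^\perp$, the blocks $\Bjs$ and $\sj$, the cutoffs $\vp_j$ --- are names that occur only in the paper's unused macro list; they never appear in the body of the paper, and no estimate of Calder\'on--Zygmund / Cotlar--Stein type is needed anywhere in it. So as a proof of ``the statement'' there is nothing to verify, and as a proof of anything in the paper it is aimed at the wrong target.

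The closest genuine result to what you describe is the decay bound \eqref{351}, $\|e^{-t\cl_0}g\|_{H^s}\le e^{-t\ka}\|g\|_{H^s}$ for $\cl_0=P_{\{\phi'\}^\perp}\cl P_{\{\phi'\}^\perp}$, which the paper obtains in two lines: $\cl_0\ge \ka>0$ by the spectral gap \eqref{340}, the spectral theorem gives the $L^2$ decay \eqref{350}, and the norm equivalence \eqref{500} (powers of $\cl_0$ versus Sobolev norms) transfers it to $H^s$. If that is what you were after, your frequency-decomposition route has a concrete gap beyond being over-engineered: the spectral gap of $\cl_0$ is not a frequency-localized statement. The removed direction $\phi'$ is a fixed $L^2$ function with tails on all frequency scales, so $P_{\{\phi'\}^\perp}$ does not commute with Littlewood--Paley truncations ``up to an acceptable error'' uniformly in the scale, and there is no sense in which the zero mode lives only in the low-frequency block --- the positivity of $\cl_0$ is a global spectral fact about the full operator, not something you can recover scale by scale and then glue. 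The functional-calculus argument the paper uses sidesteps all of this, which is why no almost-orthogonality or $T(1)$ input is required.
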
}
\newcommand{\bex}{\begin{Example}}
\newcommand{\eex}{\end{Example}}
\newcommand{\bc}{\begin{corollary}}
\newcommand{\ec}{\end{corollary}}
\newcommand{\bcl}{\begin{claim}}
\newcommand{\ecl}{\end{claim}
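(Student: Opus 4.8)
The text supplied above consists entirely of a LaTeX preamble: the \texttt{documentclass} declaration, package imports, theorem-environment declarations, and a long list of \texttt{newcommand} macro definitions. It terminates at the line \texttt{newcommand\{ecl\}\{end\{claim\}}, which is itself an (incomplete) macro definition rather than a mathematical assertion. Consequently there is no theorem, lemma, proposition, or claim present in the excerpt whose proof could be sketched.

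Because no hypotheses, no conclusion, and no mathematical objects of interest have been introduced---only notational shorthands such as \(\R\), \(\C\), \(\lp\), \(\czz\), and various differential-operator abbreviations---it is not possible to propose a proof strategy. A proof proposal requires at minimum a statement identifying what is to be shown; here the document has not yet reached its first displayed result.

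If the intent was to include the first substantive statement of the paper, that statement appears to have been omitted from the excerpt (the cut evidently occurred one or more lines too early, in the middle of the macro block, before any \texttt{begin\{theorem\}} or \texttt{begin\{lemma\}} environment was opened). I would need the actual statement---its hypotheses on the operators, kernels, or function spaces involved, and the asserted conclusion---before I could describe an approach, list the key steps, and identify the principal obstacle. As matters stand, there is nothing to prove.
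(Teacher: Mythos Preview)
Your assessment is correct: the extracted ``statement'' is not a mathematical assertion at all but a fragment of the paper's macro block (the shortcuts \texttt{\textbackslash bcl} and \texttt{\textbackslash ecl} for opening and closing a \texttt{claim} environment), so there is nothing to prove and no corresponding proof in the paper to compare against. Your response appropriately identifies this and declines to invent content.
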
}
\newcommand{\bl}{\begin{lemma}}
\newcommand{\el}{\end{lemma}}
\newcommand{\dea}{(-\De)^\be}
\newcommand{\naa}{|\nabla|^\be}
\newcommand{\cj}{{\mathcal J}}

\title[Kinks of fractional $\phi^4$ models]
{Kinks of  fractional $\phi^4$ models: existence,  uniqueness, monotonicity, stability,  and sharp asymptotics}

\author[Atanas G. Stefanov]{\sc Atanas G. Stefanov}
\address{  
Department of Mathematics, 	University of Alabama at Birmingham,
University Hall 4049, Birmingham, AL 35294, USA}
\email{stefanov@uab.edu}

\author[P.G. Kevrekidis]{\sc P.G. Kevrekidis }
\address{Department of Mathematics and Statistics, University of Massachusetts,Amherst, MA 01003-4515, USA}
\address{Department of Physics, University of Massachusetts,Amherst, MA 01003-4515, USA}
\email{kevrekid@umass.edu}

\thanks{This material is based upon work supported by the U.S. National Science Foundation under the award \# 2204788 (A.G.S) and PHY-2110030,
PHY-2408988 and DMS-2204702 (P.G.K.). }

\subjclass[2010]{Primary  }

\keywords{Fractional elliptic equations, kink solutions, monotonicity, stability}

\date{\today}
 
\begin{abstract}
 In the present work we construct kink solutions for different (parabolic and wave) variants of the fractional $\phi^4$ model, in both the sub-Laplacian and   super-Laplacian setting. We establish existence and monotonicity results (for the sub - Laplacian case), along with sharp asymptotics which are corroborated through
 numerical computations. Importantly, in the sub-Laplacian regime, we provide the explicit and numerically verifiable spectral condition, which guarantees uniqueness for odd kinks.
 We  check numerically the relevant condition to confirm
 the uniqueness of such solutions.
 In addition, we show asymptotic stability for the stationary kinks in the parabolic setting and also, the spectral stability for  the traveling kinks in the corresponding  wave equation. 
\end{abstract}

\maketitle
\section{ Introduction}

The subject of fractional models in both dissipative
and conservative media described by nonlinear
differential equations has seen considerable growth
recently, as has been documented in numerous
reviews and books~\cite{podlubny1999fractional,cuevas,Samko,Mihalache2021}.
Relevant areas of corresponding applications extend
from optical media~\cite{malomed} to epidemic 
modeling, e.g., for measles~\cite{qureshi2020real},
from biological systems~\cite{IonescuCNSNS2017}
to economic growth models~\cite{ming2019application}
and from computer viruses~\cite{singh2018fractional}
to nonlinear wave phenomena~\cite{cuevas}.

In the present work we intend to provide a series
of results motivated by developments in fields
of dissipative (parabolic) and conservative (wave)
nonlinear partial differential equations. In that context,
while we will preserve the corresponding integer 
temporal (respectively, first or second) derivatives,
we will model spatial varying  features
through the inclusion of Riesz derivatives which 
constitute a promising framework for the corresponding
physical modeling~\cite{muslih2010riesz}. One of the many
corresponding examples has been the use of such continuum
limit representations for interacting particle systems
incorporating long-range interactions~\cite{tarasov2006continuous}.

The more concrete paradigm of interest to our study
is given by the widely used dispersive model of the
$\phi^4$ type~\cite{p4book}, as well as its dissipative
variant, namely the Allen-Cahn equation~\cite{Bartels2015}.
One of the remarkable recent developments in nonlinear optics
has been the ability of experimental groups to control
the order of dispersion, as manifested, e.g., in~\cite{BlancoRedondoNC2016,RungeNP2020}. Such
control was initially reported and accordingly leveraged
for integer orders of dispersion~\cite{TamOL2019,TamPRA2020,BandaraPRA2021,aceves}
(extending corresponding earlier theoretical
work, e.g., in~\cite{KarlssonOC1994,AkhmedievOC1994,KarpmanPLA1994}). 
However, in recent years, relevant proposals were
extended to the regime of fractional derivatives~\cite{Longhi:15}
and were subsequently implemented experimentally not only
in the linear regime~\cite{malomed}, but also more
recently in the nonlinear regime~\cite{hoang2024observationfractionalevolutionnonlinear}.
This can be characterized as a milestone development,
since it experimentally realized a Riesz derivative
corresponding to the square root of the Laplacian,
as well as additional sub-Laplacian dispersion
operators ``in the vicinity'' of a Riesz exponent 
$\alpha=1$ (see also below). The methodology utilized
therein (and private communications with this group)
reflect the potential to realize arbitrary 
fractional exponents henceforth, paving the
way for a systematic study of their implications for
wave (and dissipative) phenomena henceforth. 

\section{Mathematical Setup and Main Results}

With the above considerations in mind, we shall consider evolutionary fractional models of parabolic type, such as 
 \begin{equation}
 	\label{10} 
 	u_t + D^\alpha u+ u f(u^2-1) =0, t\geq 0, x\in\rone.
 \end{equation}
 where $\al>0$ and $D=\sqrt{-\p_{xx}}$ is the Zygmund operator, defined via the Fourier multiplier $\widehat{D g}(\xi)=2\pi |\xi|\hat{g}(\xi)$, see Section \ref{sec:2} below.

 A dispersive model of interest is given by the following fractional Klein-Gordon  equation
 \begin{equation}
 	\label{20} 
 	u_{tt} - u_{xx} + D^\alpha u+ u f(u^2-1) =0, t\geq 0, x\in\rone.
 \end{equation}
 In both examples \eqref{10}, \eqref{20}, we shall be interested in heteroclinic solutions (both traveling and stationary), which connect the equilibrium points  $\pm 1$. 
Part of our motivation, in line with earlier work
on super-Laplacian models~\cite{DeckerJPA2020,DeckerCNSNS2021,TsoliasJPA2021,TSOLIAS2023107362} is that these real field theories share 
many of the stationary solutions of the Schr{\"o}dinger-like
models, and constitute excellent entry points towards 
developing a fundamental understanding of the parabolic
and wave system features.

To make things more concrete, we will settle for the rest of our presentation on the standard ``Di Giorgi'' nonlinearity, corresponding to  $f(x)=x$. In such a case, a stationary kink $\phi$ of \eqref{10} will take the form 
 \begin{equation}
 	\label{30} 
 	D^\alpha \phi+\phi(\phi^2-1)=0, x\in \rone, 
 \end{equation}
 where we furthermore will subject it to the boundary conditions, $\lim_{x\to \pm \infty} \phi(x)=\pm1$. As an example, the case $\alpha=2$ is of course classical, the solution to \eqref{30} exists and it is unique. One can in fact write an explicit solution in the form $W(x)=\tanh\left[\f{x}{\sqrt{2}}\right]$. Note that a straightforward Fourier transform calculation yields, 
$
\widehat{\tanh\left[\f{\cdot}{\sqrt{2}}\right]}(\xi)=-\f{i \pi \sqrt{2}}{\sinh[\sqrt{2}\pi^2 \xi]}, 
$
which is definitely square integrable near $\infty$.  We also infer the asymptotic near zero, associated with the tail of the wave: 
$$
|\widehat{D^{\f{\al}{2}} W}(\xi)|=|2\pi \xi|^{\f{\al}{2}}| |\widehat{\tanh\left[\f{\cdot}{\sqrt{2}}\right]}(\xi)|\sim \f{1}{|\xi|^{1-\f{\al}{2}}}. 
$$
So,   $ D^{\f{\al}{2}}W\in L^2(\rone)$, but only for $\al>1$. 
Thus, we henceforth fix the {\it   odd function} $W$, which satisfies 
 \begin{equation}
 	\label{25} 
 	|1-W^2(x)|\leq C e^{-a|x|}, \ \ a>0; \ \ W\in \dot{H}^{\f{\al}{2}}(\rone)\cap L^\infty(\rone).
 \end{equation}

 In the case of \eqref{20}, an appropriate model can be built upon traveling kink solutions $\Phi(x-c t), |c|<1$, in the form 
 \begin{equation}
 	\label{35} 
 	-(1-c^2)\Phi''+ D^\alpha \Phi+\Phi(\Phi^2-1)=0,
 \end{equation}
again, supplemented by  $\lim_{x\to \pm \infty} \Phi(x)=\pm1$. 
 
 We now review the literature, which is vast. We will therefore not even attempt to provide a comprehensive review, but we will only touch upon results  insofar as they concern our concrete subject matter. The classical problem 
 $$
 \Delta u(x)+f(u(x))=0, \ \ x\in \Om
 $$
 was studied, among other authors, in \cite{BCN2}, where the authors investigated uniqueness and monotonicity in a bounded domain. This was later extended to fully non-linear problems. For the fractional problem, we mention the work \cite{DSV}, where the following over-determined problem was considered 
 $$
 \left\{
 \begin{array}{cc}
 D^s u=f(u(x)) & x\in\Omega \\
 u>0 & x\in \Om  \\
 u=0 & x\in \Om^c \\
 \p_n u = const & x\in\p\Om.
 \end{array}
 \right.
 $$
 where $s\in (0,2)$ and $\Om:=\{x\in\rn: x_n>\vp(x')\}$. Monotonicity of the solution was obtained, and then, under appropriate conditions on $f, \Om$, it was shown that $\Om$ must be half-space. 
 
The case of the whole space, $\rone$, was considered in a very recent result, \cite{chen}.  This is directly relevant to our considerations, as we actually use its conclusion, and it appears as Theorem 4 in \cite{chen}. We provide a representative corollary of it, as follows. 
\begin{theorem}[Wu-Chen, \cite{chen}] 
	\label{theo:chen} 
	Let $\al\in (0,2)$. Let $u\in C^{1,1}_{loc.}(\rone)\cap L^\infty(\rone)$ be a solution to 
	$$
	D^\al u (x) = f(u(x)), x\in\rone  
	$$
	where $\lim_{x\to \pm\infty} u(x)=\pm 1$ and $|u(x)|\leq 1$. Assume that $f$ is continuous on $[-1,1]$, non-increasing on $[-1, -1+\de]\cup [1-\de, 1]$ for some small $\de>0$.  Then, $u$ is strictly monotonically increasing. 
\end{theorem}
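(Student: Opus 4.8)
\emph{Proof strategy.} The plan is to run the \emph{sliding method}. For $\tau>0$ put $w_\tau(x):=u(x+\tau)-u(x)$; the goal is to prove $w_\tau\ge 0$ on $\rone$ for every $\tau>0$ and then to upgrade this to strict monotonicity. Since $u\in C^{1,1}_{loc}(\rone)\cap L^\infty(\rone)$ and $\al/2\in(0,1)$, the operator acts pointwise as $D^\al v(x)=c_\al\,\pv\int_\rone \frac{v(x)-v(y)}{|x-y|^{1+\al}}\,dy$ with $c_\al>0$, and, since $D^\al$ commutes with translations, $w_\tau$ solves $D^\al w_\tau=f(u(\cdot+\tau))-f(u)$ on $\rone$. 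Two observations organize everything. First, $u(\pm\infty)=\pm1$ gives $w_\tau(x)\to0$ as $x\to\pm\infty$, so a continuous $w_\tau$ with $\inf w_\tau<0$ attains its infimum at a finite $x_0$, where the principal-value formula forces $D^\al w_\tau(x_0)<0$ (equality would make $w_\tau$ constant, hence $\equiv0$). Second, at any point $x$ with $w_\tau(x)<0$ we have $u(x+\tau)<u(x)$, so $D^\al w_\tau(x)=f(u(x+\tau))-f(u(x))\ge 0$ \emph{provided} $u(x)$ and $u(x+\tau)$ both lie in $[-1,-1+\de]$ or both in $[1-\de,1]$, where $f$ is non-increasing.

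\emph{Step 1: $w_\tau\ge 0$ for all large $\tau$.} Fix $\ve\in(0,\de)$ and $R_\ve$ with $u>1-\ve$ on $(R_\ve,\infty)$ and $u<-1+\ve$ on $(-\infty,-R_\ve)$. For $\tau>2R_\ve$: if $x\le -R_\ve$ then $u(x)<-1+\ve$, and $w_\tau(x)<0$ forces $u(x+\tau)<u(x)<-1+\ve$, so both values sit in $[-1,-1+\de]$; if $x>-R_\ve$ then $x+\tau>R_\ve$, so $u(x+\tau)>1-\ve$, and $w_\tau(x)<0$ forces $u(x)>u(x+\tau)>1-\ve$, so both values sit in $[1-\de,1]$. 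Hence a hypothetical negative minimum $x_0$ of $w_\tau$ would give simultaneously $D^\al w_\tau(x_0)\ge 0$ and $D^\al w_\tau(x_0)<0$ --- a contradiction. Thus $w_\tau\ge 0$ for $\tau>2R_\ve$.

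\emph{Step 2: sliding down, then strictness.} Let $\tau^*:=\inf\{\bar\tau>0:\,w_\tau\ge 0\text{ on }\rone\text{ for all }\tau\ge\bar\tau\}$, finite by Step 1. If $\tau^*>0$, then $w_{\tau^*}\ge 0$ (limit of $w_\tau\ge 0$) and $w_{\tau^*}\not\equiv 0$ (otherwise $u$ is $\tau^*$-periodic, contradicting $u(\pm\infty)=\pm1$); the nonlocal strong maximum principle then forces $w_{\tau^*}>0$ on all of $\rone$: if $w_{\tau^*}(x_1)=0$ then $D^\al w_{\tau^*}(x_1)=-c_\al\int \frac{w_{\tau^*}(y)}{|x_1-y|^{1+\al}}\,dy<0$, while $u(x_1+\tau^*)=u(x_1)$ makes $D^\al w_{\tau^*}(x_1)=0$. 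Now choose $R$ so large that whenever $0<\tau\le\tau^*$ and $|x|>R$ the pair $u(x),u(x+\tau)$ lies in $[-1,-1+\de]$ (for $x<-R$) or in $[1-\de,1]$ (for $x>R$); since $\min_{[-R,R]}w_{\tau^*}>0$ and $u$ is uniformly continuous on compacts, there is $\eta>0$ with $w_\tau>0$ on $[-R,R]$ for all $|\tau-\tau^*|<\eta$. For such $\tau<\tau^*$ any point with $w_\tau<0$ lies in $|x|>R$, hence in the good region, and the Step 1 argument again forbids a negative minimum, so $w_\tau\ge 0$ --- contradicting the minimality of $\tau^*$. Hence $\tau^*=0$, i.e.\ $u(x+\tau)\ge u(x)$ for all $\tau>0$: $u$ is non-decreasing. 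Finally, if $u\equiv k$ on some interval $[a,b]$, then for $x$ in its interior $D^\al u(x)=c_\al\int_{\rone\setminus[a,b]}\frac{k-u(y)}{|x-y|^{1+\al}}\,dy$; since $k-u(y)\ge 0$ for $y<a$ with strict sign near $-\infty$, $k-u(y)\le 0$ for $y>b$ with strict sign near $+\infty$, and $t\mapsto|t-y|^{-1-\al}$ is strictly monotone on each side, one gets $D^\al u(x_1)>D^\al u(x_2)$ for $a<x_1<x_2<b$, contradicting $D^\al u\equiv f(k)$ there. So $u$ is nowhere locally constant, and being non-decreasing it is strictly increasing.

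\emph{Main obstacle.} The substantive point is the nonlocal maximum-principle machinery on the whole line: one must know the candidate negative extremum of $w_\tau$ is genuinely \emph{attained} (which uses the decay $w_\tau(\pm\infty)=0$ inherited from the boundary conditions on $u$), and one must trap that extremum inside the region near $\pm1$ where the hypothesis on $f$ guarantees the ``wrong'' sign of $D^\al w_\tau$; this is exactly where the non-increase of $f$ near the endpoints and the limits $u(\pm\infty)=\pm1$ enter essentially. The $C^{1,1}_{loc}$ regularity is what legitimizes the pointwise principal-value identities at extremal points; the continuity/openness bookkeeping for $\tau^*$ and the final strictness computation are routine.
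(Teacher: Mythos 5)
This statement is quoted from Wu--Chen \cite{chen} and the paper offers no proof of it --- it is used as a black box (and the surrounding remarks explicitly contrast the paper's variational route to monotonicity with the ``moving plane methods'' of \cite{chen}). So the only meaningful comparison is with the cited source, and your argument is precisely the standard sliding method one expects there. I checked it and it is correct: the attainment of a negative infimum of $w_\tau$ uses exactly the decay $w_\tau(\pm\infty)=0$ you invoke; the sign $D^\al w_\tau(x_0)<0$ at a non-constant global minimum is legitimate for $C^{1,1}_{loc}\cap L^\infty$ functions via the principal-value (equivalently, symmetric second-difference) representation, valid for all $\al\in(0,2)$ even though the paper's display \eqref{ds:10} is only written for exponents in $(0,1)$; the trapping of the minimum in the region where $f$ is non-increasing works because $w_\tau(x_0)<0$ forces \emph{both} $u(x_0)$ and $u(x_0+\tau)$ into the same endpoint interval once $\tau$ is large (Step 1) or once $|x_0|>R$ (Step 2); the strong-maximum-principle step at $\tau^*$ correctly uses that $w_{\tau^*}(x_1)=0$ makes the right-hand side of the equation vanish exactly, with no sign hypothesis on $f$ needed; and the continuity/compactness bookkeeping for decreasing $\tau^*$ to $0$ is sound (one needs $R\geq R_\de+\tau^*$ on the left so that $x+\tau$ also stays in the endpoint region, which your choice of $R$ accommodates). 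The only cosmetic imprecision is in the final strictness step: if $u\equiv k$ on $[a,b]$ with $k=\pm1$, only one of the two tail contributions is strictly signed, but as you implicitly use, one strict tail suffices to make $D^\al u$ strictly decreasing across $(a,b)$ and contradict $D^\al u\equiv f(k)$. In short: there is no internal proof to match, and your blind reconstruction is a correct, essentially self-contained proof of the cited result by the same (sliding) method.
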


 \subsection{Main results}
 
 Our results are in several different directions. We first consider the sub-Laplacian regime $\al\in (1,2)$. We present an existence result for odd kinks, complemented with monotonicity statement and sharp asymptotics at $\pm \infty$, see Theorem \ref{theo:10} below.  Then, we present a conditional uniqueness result for odd kinks, which holds under a concrete spectral condition, see \eqref{ep:10}. Then, we move onto an asymptotic stability results for the sub-Laplacian kinks, as stationary solutions of the  parabolic problem \eqref{10}. Next, we discuss existence and spectral stability result for subsonic traveling kinks for the fractional wave equation, \eqref{20}. Finally, we provide an existence result for the super-Laplacian case $\al\in (2,4)$, see Theorem \ref{theo:40}. 
 
 \subsection{Existence of kinks: sub-Laplacian case}
 \begin{theorem}[Existence of kinks in sub-Laplacian regime]
 	\label{theo:10} 
 	
 	Let $\al\in (1,2)$. Then, the equation \eqref{30}, subject to $\lim_{x\to \pm \infty} \phi(x)=\pm 1$, 
 	 has  an {\bf odd} solution $\phi\in L^\infty(\rone) \cap 
 	\left(\cap_{s\geq \f{\al}{2}} \dot{H}^s(\rone)\right)$, in particular  $\phi\in C^\infty(\rone)$. In addition,  
 	\begin{enumerate}
 		\item   The kink $\phi$ satisfies  
 		\begin{equation}
 			\label{300} 
 			|\phi'(x)|\leq C (1+|x|)^{-1-\al}, \ \ |\phi(x)-sgn(x)|\leq C (1+|x|)^{-\al}.
 		\end{equation}
 	In fact,  $\phi$ obeys  the following sharp asymptotics for $|x|>1$, 
 		\begin{eqnarray}
 			\label{310} 
 			\phi'(x) &=&\f{2^{\al-2} \al(\al-1) \Ga\left(\f{\al-1}{2}\right)}{\sqrt{\pi} \Ga\left(\f{2-\al}{2}\right)}     |x|^{-1-\al}+O(|x|^{-3}), \\
 			\label{320} 
 			\phi(x) &=&  sgn(x) - sgn(x)  \f{2^{\al-2} (\al-1) \Ga\left(\f{\al-1}{2}\right)}{\sqrt{\pi} \Ga\left(\f{2-\al}{2}\right)}     |x|^{-\al}+O(|x|^{-2}).
 		\end{eqnarray}
 	\item $\phi'\in \cap_{s\geq 0} \dot{H}^s(\rone)$ 
 	\item The linearized operator 
 	$$
 	\cl:= D^\al+2-3(1-\phi^2)\geq 0
 	$$ 
 	and $\cl[\phi']=0$. In particular,   $\phi'>0$, $\phi$ is monotone increasing and $-1<\phi(x)<1$.  
 	
 	As a consequence, the kink $\phi$ is a local minimizer\ $\phi$ is a local minimizer for \eqref{48}, see below. 
 	\end{enumerate}
 \end{theorem}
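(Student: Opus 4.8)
The plan is to produce the odd kink by direct minimization and then to extract items (1)--(3) from the Euler--Lagrange equation \eqref{30} together with Theorem~\ref{theo:chen}. Fix the explicit odd profile $W(x)=\tanh(x/\sqrt{2})$ from \eqref{25} and consider the (renormalized) energy
\begin{equation*}
\mathcal E(\phi)=\tfrac12\int_{\rone}|D^{\alpha/2}\phi|^2\,dx+\tfrac14\int_{\rone}(1-\phi^2)^2\,dx
\end{equation*}
on the affine class $\mathcal A=\{\phi=W+\psi:\ \psi\in\dot H^{\alpha/2}(\rone),\ \psi\ \text{odd}\}$. By \eqref{25} one has $\mathcal E(W)<\infty$ and $\|D^\alpha W\|_{\dot H^{-\alpha/2}}^2=(2\pi)^\alpha\|D^{\alpha/2}W\|_{L^2}^2<\infty$, so $\mathcal E$ is finite somewhere on $\mathcal A$ and coercive in $\|D^{\alpha/2}\psi\|_{L^2}$ (the cross term $\langle D^\alpha W,\psi\rangle$ is subquadratic). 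Along a minimizing sequence oddness forces $\psi_n(0)=0$, so the $\dot H^{\alpha/2}$ bound together with the embedding $\dot H^{\alpha/2}(\rone)\hookrightarrow\dot C^{(\alpha-1)/2}$ (valid since $\alpha/2>1/2$) gives equicontinuity and a local uniform bound; hence, along a subsequence, $\psi_n\to\psi_*$ in $C^0_{\mathrm{loc}}$ and weakly in $\dot H^{\alpha/2}$. Weak lower semicontinuity of the kinetic part and Fatou for the nonnegative potential part give $\mathcal E(W+\psi_*)\le\inf_{\mathcal A}\mathcal E$, so $\phi:=W+\psi_*\in\mathcal A$ is a minimizer; finiteness of the potential term plus uniform continuity forces $\phi(x)\to\pm1$ as $x\to\pm\infty$ (replacing $\phi$ by $-\phi$ if needed, which stays in $\mathcal A$ since $2W\in\dot H^{\alpha/2}$), and in particular $\psi_*\in L^2(\rone)$. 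The Euler--Lagrange equation is exactly \eqref{30}; a nonlocal maximum principle for $D^\alpha\phi=\phi(1-\phi^2)$ (evaluate at an extremum of $\phi$ and use the strict maximum principle) yields $-1<\phi<1$ for any bounded solution with these limits; and a standard bootstrap in \eqref{30} ($\phi\in L^\infty\Rightarrow D^\alpha\phi\in L^\infty\Rightarrow\phi$ gains $\alpha$ derivatives, iterate) gives $\phi\in C^\infty(\rone)\cap\bigcap_{s\ge\alpha/2}\dot H^s(\rone)$.

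For item (1) I would peel off the jump: write $\phi=\chi\cdot\operatorname{sgn}(x)+\psi$ with $\chi$ a smooth even cut-off equal to $1$ for $|x|\ge2$ and to $0$ for $|x|\le1$, so that $\psi$ is smooth with $\psi(x)\to0$. For $|x|\ge2$ the equation reads $(D^\alpha+2)\psi=-D^\alpha(\chi\operatorname{sgn})-3\operatorname{sgn}(x)\psi^2-\psi^3$. Now $D^\alpha[\operatorname{sgn}](x)=\kappa_\alpha\,\operatorname{sgn}(x)\,|x|^{-\alpha}$ with $\kappa_\alpha=\dfrac{2^{\alpha}\,\Gamma\!\left(\frac{\alpha+1}{2}\right)}{\sqrt{\pi}\,\Gamma\!\left(\frac{2-\alpha}{2}\right)}$ (a direct computation from $\widehat{\operatorname{sgn}}(\xi)=(i\pi\xi)^{-1}$; note $\kappa_1=2/\pi$), while $D^\alpha((1-\chi)\operatorname{sgn})=O(|x|^{-2-\alpha})$ because $(1-\chi)\operatorname{sgn}$ is odd and compactly supported. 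Hence $\psi=(D^\alpha+2)^{-1}G$ with $G(x)=-\kappa_\alpha\operatorname{sgn}(x)|x|^{-\alpha}+O(|x|^{-2-\alpha})-3\operatorname{sgn}(x)\psi^2-\psi^3$ for $|x|$ large. The kernel $\mathcal K_\alpha=\mathcal F^{-1}[((2\pi|\xi|)^\alpha+2)^{-1}]$ is positive, lies in $L^1(\rone)$ with $\int\mathcal K_\alpha=\tfrac12$ (here $\alpha>1$ is used), and obeys $\mathcal K_\alpha(x)=O(|x|^{-1-\alpha})$, $\mathcal K_\alpha'(x)=O(|x|^{-2-\alpha})$; a convolution bootstrap against $\mathcal K_\alpha$ first upgrades $\psi\to0$ to the rough bounds \eqref{300}. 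Then, since $((2\pi|\xi|)^\alpha+2)^{-1}=\tfrac12-\tfrac14(2\pi|\xi|)^\alpha+O(|\xi|^{2\alpha})$ near $\xi=0$, the leading term of $\mathcal K_\alpha*G$ is $-\tfrac{\kappa_\alpha}{2}\operatorname{sgn}(x)|x|^{-\alpha}$, while every remaining contribution (the $\tfrac14(2\pi|\xi|)^\alpha$ correction, the $\psi^2,\psi^3$ terms, and the convolution tail) is $O(|x|^{-1-\alpha})\subset O(|x|^{-2})$ because $\alpha>1$; this is \eqref{320}, and differentiating gives \eqref{310} with error $O(|x|^{-2-\alpha})\subset O(|x|^{-3})$. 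One checks that $\tfrac{\kappa_\alpha}{2}$ equals the constant displayed in \eqref{320} via $(\alpha-1)\Gamma(\tfrac{\alpha-1}{2})=2\Gamma(\tfrac{\alpha+1}{2})$.

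Item (2) then follows quickly: by \eqref{310}--\eqref{320}, $\phi'\in L^1(\rone)\cap L^\infty(\rone)$, so $\widehat{\phi'}$ is continuous and bounded near $\xi=0$; moreover $\partial^k\phi=O(|x|^{-\alpha-k})$ for every $k$ (same analysis), hence $\partial^k\phi\in L^1$, so $|\widehat{\phi'}(\xi)|\lesssim_k|\xi|^{-(k-1)}$ decays faster than any polynomial, whence $\int_{\rone}|\xi|^{2s}|\widehat{\phi'}(\xi)|^2\,d\xi<\infty$ for all $s\ge0$, i.e. $\phi'\in\bigcap_{s\ge0}\dot H^s(\rone)$. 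For item (3): differentiating \eqref{30} gives $D^\alpha\phi'+(3\phi^2-1)\phi'=0$, that is $\cl[\phi']=0$ (legitimate since $\phi'\in\dot H^\alpha$ by item (2)). Because $|\phi|\le1$, $\phi\in C^{1,1}_{\mathrm{loc}}(\rone)$, and $f(s)=s(1-s^2)$ is non-increasing near $s=\pm1$ (as $f'(\pm1)=-2$), Theorem~\ref{theo:chen} applies and shows $\phi$ is strictly increasing; if $\phi'(x_0)=0$ then $D^\alpha\phi'(x_0)=-c_\alpha\!\int\!\frac{\phi'(y)}{|x_0-y|^{1+\alpha}}\,dy<0$ contradicts $\cl[\phi'](x_0)=0$, so $\phi'>0$ on $\rone$ and hence $-1<\phi<1$. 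With the positive solution $\phi'$ of $\cl[\phi']=0$ at hand, the ground-state representation formula for fractional Schr\"odinger operators gives, for $u$ in a dense class,
\begin{equation*}
\langle \cl u,u\rangle=\frac{c_\alpha}{2}\iint_{\rone\times\rone}\Big|\frac{u(x)}{\phi'(x)}-\frac{u(y)}{\phi'(y)}\Big|^2\,\frac{\phi'(x)\phi'(y)}{|x-y|^{1+\alpha}}\,dx\,dy\ \ge\ 0,
\end{equation*}
so $\cl\ge0$, with equality exactly for $u\in\operatorname{span}\{\phi'\}$; thus $\ker\cl=\operatorname{span}\{\phi'\}$. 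Finally, the second variation of \eqref{48} at $\phi$ equals $\langle\cl\cdot,\cdot\rangle$ (up to a positive constant); since $\phi'$ is even while the admissible perturbations may be taken odd (hence $\perp\ker\cl$), and since $\sigma_{\mathrm{ess}}(\cl)=[2,\infty)$, the Hessian is coercive on the odd class, and together with control of the cubic and quartic remainders this shows $\phi$ is a (strict) local minimizer for \eqref{48}.

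I expect the genuine difficulty to sit in item (1): computing $\kappa_\alpha$ cleanly and — more delicately — controlling the action of the nonlocal resolvent $(D^\alpha+2)^{-1}$ on the algebraically decaying source $G$ tightly enough that the remainder in \eqref{320} (resp. \eqref{310}) is really $O(|x|^{-2})$ (resp. $O(|x|^{-3})$) and not merely $o(|x|^{-\alpha})$. By contrast the compactness in the existence step is routine (oddness supplies the anchoring that a homogeneous Sobolev norm alone does not), and once $\phi'>0$ is available the inequality $\cl\ge0$ is essentially a citation.
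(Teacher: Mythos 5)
Your proposal reaches all the conclusions of Theorem \ref{theo:10}, but by routes that differ from the paper's in three substantive ways. (i) \emph{Existence:} you minimize $I$ directly on the odd affine class $W+\dot H^{\alpha/2}_{\mathrm{odd}}$, using the anchoring $\psi(0)=0$ together with $\dot H^{\alpha/2}\hookrightarrow \dot C^{(\alpha-1)/2}$ to get local compactness, then weak lower semicontinuity plus Fatou. The paper instead regularizes with the penalty $\epsilon\int(u-W)^2\ln(e+x^2)\,dx$ (which buys genuine $L^2$ pre-compactness via the Riesz criterion), solves the $\epsilon$-problems, proves uniform-in-$\epsilon$ bounds (notably on $q_\epsilon=2Wv_\epsilon+v_\epsilon^2$ and $\|v_\epsilon\|_\infty$), and passes to the limit in the Euler--Lagrange equation. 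Your direct method is sound here precisely because the functional is a sum of weakly lsc pieces and oddness kills translations; it is arguably shorter, while the paper's route hands you for free the nonnegativity of $\cl$ as a limit of second variations. (ii) \emph{Monotonicity vs.\ positivity of $\cl$:} you run the logic in the opposite order --- maximum principle for $|\phi|<1$, then Theorem \ref{theo:chen} for monotonicity, then $\phi'>0$, then $\cl\ge 0$ via the ground-state representation --- whereas the paper gets $\cl\ge 0$ from the variational construction and deduces the sign of $\phi'$ from the Perron--Frobenius property of \cite{FL}. Both are legitimate; note the paper explicitly advertises that it avoids the moving-plane input of \cite{chen} for this theorem, so your argument is less self-contained on this point. (iii) \emph{Asymptotics:} you peel off $\chi\cdot\mathrm{sgn}$ and read the leading tail from $(D^\alpha+2)^{-1}[D^\alpha(\chi\,\mathrm{sgn})]$, using $D^\alpha[\mathrm{sgn}](x)=\kappa_\alpha\,\mathrm{sgn}(x)|x|^{-\alpha}$ and the value $1/2$ of the resolvent symbol at $\xi=0$; the paper works with the equation $\phi'=3K_\alpha*[(1-\phi^2)\phi']$ and reads the tail from the kernel asymptotics $K_\alpha(x)\sim c_\alpha|x|^{-1-\alpha}$ times $\int(1-\phi^2)\phi'=4/3$. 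Your constant checks out against \eqref{320} (via $(\alpha-1)\Gamma(\frac{\alpha-1}{2})=2\Gamma(\frac{\alpha+1}{2})$, and $\kappa_1=2/\pi$ is the Hilbert-transform sanity check), so the two computations are consistent.

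Two steps are asserted rather than proved, and they are exactly where the work lies. First, the ``convolution bootstrap'' upgrading $\psi\to 0$ to $|\psi|\lesssim(1+|x|)^{-\alpha}$: starting from decay with no rate, the quadratic terms $3\,\mathrm{sgn}(x)\psi^2+\psi^3$ must be absorbed, which requires a quantitative contraction --- the paper's dyadic scheme $a_k=\max_{x\ge 2^k}u'(x)$, $a_k\le C_\alpha\delta\,a_{k-1}+C_\alpha 2^{-k(\alpha+1/2)}$, exploiting the smallness of $1-u^2$ far out, is the mechanism you need to reproduce (your quadratic nonlinearity plays the role of the small coefficient). Second, your item (2) rests on ``$\partial^k\phi=O(|x|^{-\alpha-k})$ for every $k$ (same analysis)'', which you do not establish and do not need: the paper's $L^2$ bootstrap from the equation ($\|D^{\alpha}u'\|\le\|(3\phi^2-1)\phi'\|\lesssim\|\phi'\|$, then iterate) gives $\phi'\in\cap_{s\ge 0}\dot H^s$ with far less effort. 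Minor additional points: the identity $(D^\alpha+2)\psi=G$ with your displayed $G$ holds only for $|x|\ge 2$, so inverting the resolvent requires carrying the (compactly supported, bounded) discrepancy from $|x|<2$; and $\mathrm{sgn}(x)|x|^{-\alpha}$ is not locally integrable for $\alpha\in(1,2)$, so the convolution argument must be phrased for the bounded function that coincides with it on $|x|\ge 2$. None of these is a wrong idea, but as written they are gaps to be filled.
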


\begin{figure}[h]  
    \centering
    \includegraphics[width=0.8\textwidth]{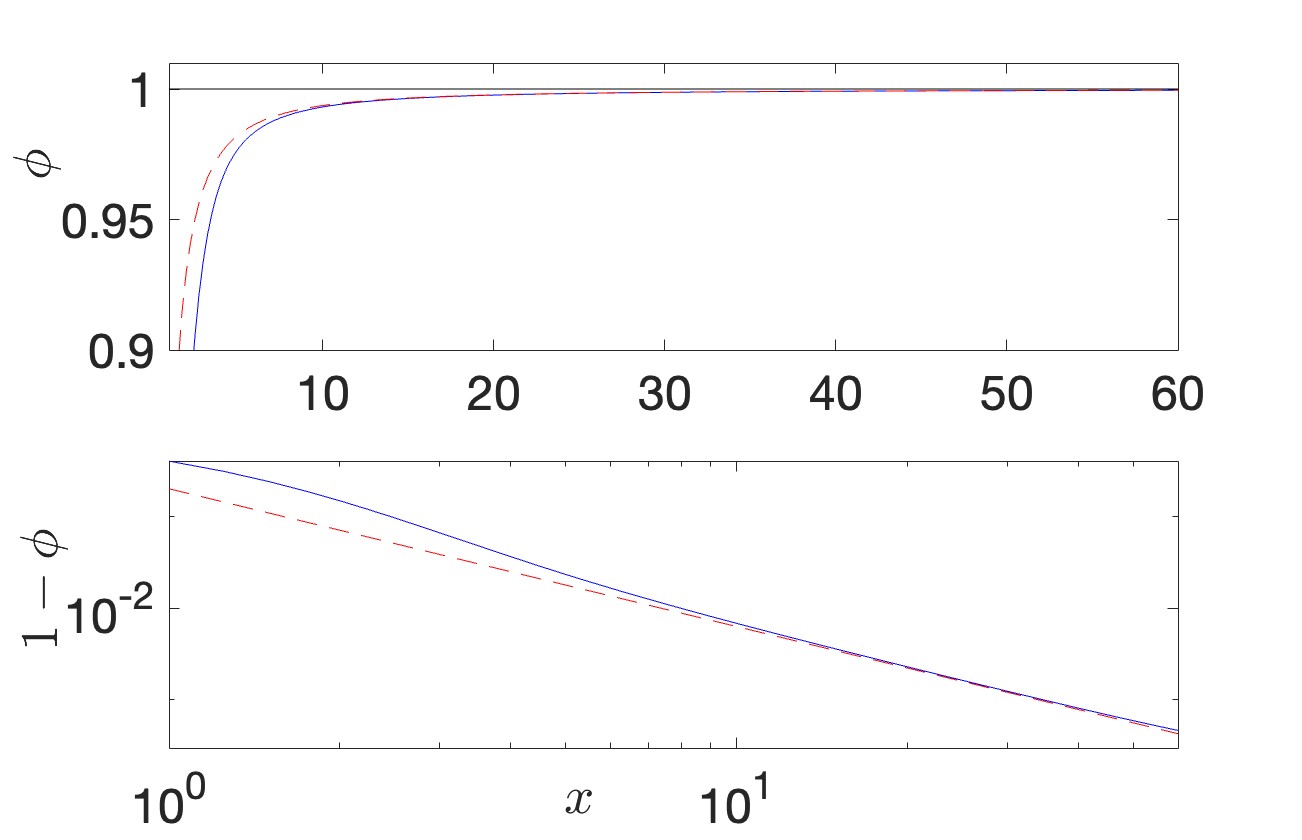}  
    \caption{In the top panel we showcase a 
    prototypical example of the decay of the 
 stationary $\phi^4$ kink to the asymptotic state
    of $\phi=1$ for the case of $\alpha=1.5$
    (the asymptotic value is denoted by the
    black line). The bottom panel shows the
    distance from the asymptotic value in a loglog
    plot. The prediction of Theorem~\ref{theo:10} 
    is shown by the dashed red line, while
    the stationary kink of the model is shown
    by the blue solid line. It can be clearly
    discerned (indeed, especially so in the
    loglog plot) that the kink tail aligns
    itself sharply with the proposed
    asymptotics of the theorem at sufficiently
    large distances.}
    \label{fig:atan_frac_f1}  
\end{figure}

 {\bf Remarks:} 
 \begin{itemize}
 	\item In \cite{chen}, a solution $u$, with the properties $\lim_{x\to \pm \infty} u(x)=\pm 1$, $-1<u(x)<1$  is proven to be monotone increasing. However, an  existence result like Theorem \ref{theo:10} seems not to have been reported previously in the literature, to the
    best of our knowledge. 
 
 	\item Related to the previous point, the monotonicity of the kink $\phi$ was established as a consequence of the variational construction (and more precisely from the positivity of the linearized operator), rather than through sophisticated moving plane methods, \cite{chen}. 
 	\item Our approach definitely fails for $\al\leq 1$, although it is conceivable that \eqref{30} has kink solutions. Indeed, we set on resolving a variational problem, which requires that $D^{\f{\al}{2}} \phi\in L^2(\rone)$.  However,  kink solutions of \eqref{30} must satisfy the asymptotics \eqref{300}, in particular $\phi'\in L^1(\rone)$. Also, it is not hard to see that 
 	$$
 	\hat{\phi}(\xi)=\f{\widehat{\phi'}(\xi)}{2\pi i \xi},
 	$$
 	in the sense of distributions, say we interpret this last as $p.v. \f{\widehat{\phi'}(\xi)}{2\pi i \xi}$.   In such a case, 
 	$$
 	\widehat{D^{\f{\al}{2}} \phi}(\xi)=(2\pi)^{\f{\al}{2}}  \f{|\xi|^{\f{\al}{2}}}{2\pi i \xi}\widehat{\phi'}(\xi)\sim \f{1}{|\xi|^{1-\f{\al}{2}}}, \ \ |\xi|\sim 0, 
 	$$
 	as $\widehat{\phi'}(\xi)$ is a continuous function, with $\widehat{\phi'}(0)=\int_{\rone} \phi'(x)dx=2$. Clearly, if  $\al\leq 1$, one has that  $\f{1}{|\xi|^{1-\f{\al}{2}}}\notin L^2(-1,1)$, which prevents this line of attack. 
 	
 	\item It is possible to consider other non-linearities in \eqref{30}, such as $f(z)=z^{2k+1}, k=1,2,\ldots$ or even polynomials $f(z)=\sum_{k=0}^n a_k z^{2k+1}$, where $a_k\geq 0$. The variational arguments presented herein are versatile enough to handle such cases, but we chose not do so, for sake of simplicity. 
 	
 	\item The remainder terms $O(|x|^{-3})$, $O(|x|^{-2})$ in \eqref{310} and \eqref{320} respectively, may be improved  to $O(|x|^{-2-\al})$ and $O(|x|^{-1-\al})$. We chose these slightly less effective bounds for the simplicity of the exposition. 
 	\item We have established   that $\cl\geq 0$, and in fact zero is a simple eigenvalue, sitting on the bottom of its spectra, $\la_0(\cl)=0$. In addition, we know by Weyl's theorem, that $\si_{a.c.}(\cl)=[2, +\infty)$. It is of course possible that some more gap eigenvalues appear (i.e. eigenvalues in $[0,2]$). This is in fact relevant for the uniqueness question, see Theorem \ref{theo:un} below, specifically the spectral condition \eqref{ep:10}.  
    \item Regarding the local minimization claim -  $\phi$  might very well be global minimizer, we just do not have a proof of this fact. For example, under the uniqueness condition, \eqref{ep:10}, it is definitely the case that $\phi$ is a global minimizer for the functional $I[u]$, see \eqref{48}. 

    \item Our numerical computations in this sub-Laplacian case
    have confirmed the relevant asymptotics, as is shown in 
    Fig.~\ref{fig:atan_frac_f1}. There, it can be seen, most notably
    in the loglog plot of the bottom panel that the tail of the 
    kink's approach to the relevant asymptotic state of $\phi=\pm 1$
    is definitively aligning with the provided asymptotic
    expression of Eq.~(\ref{theo:10}).
 	
 \end{itemize}

We next discuss the uniqueness  of the kinks for \eqref{30}. 

\subsection{(Conditional) Uniqueness of the kinks}

A natural question arises: can one come up with a condition (verifiable only in terms of $\phi$, and preferably simple to check, at least numerically) on $\phi$  which guarantees that $\phi$ is unique solution to \eqref{30}, modulo translations? The answer is positive, under a  spectral   condition as presented below, see \eqref{ep:10}.   
\begin{theorem}[Conditional Uniqueness for the kinks]
	\label{theo:un}

	Let $\al\in (1,2)$ and $\phi$ is the odd kink guaranteed by Theorem \ref{theo:10}. Assume, in addition, that the second smallest gap eigenvalue, if any, is bigger than $1$. That is   
	\begin{equation}
		\label{ep:10} 
		\la_1(\cl_\phi)> 1 \ \ \ \textup{or} \ \ \ \cl|_{\{\phi'\}^\perp} > 1
	\end{equation} 
Then,  $\phi$ is the unique odd solution to \eqref{30}. 

More precisely, assume that $u$ is an odd solution of \eqref{30}, in the sense of Theorem \ref{theo:chen}, such that 
$
1-u^2\in L^2(\rone). 
$
That is, assuming 
$$
\lim_{x\to \pm \infty} u(x)=\pm 1, \ \ 1-u^2\in L^2(\rone), \ \  |u(x)|\leq 1.
$$
and \eqref{ep:10}, it must be that   $u(x)=\phi(x)$. 
\end{theorem}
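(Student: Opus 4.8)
The plan is to show that any two odd kinks $\phi,u$ of \eqref{30} with the stated decay and sign constraints must coincide, using the positivity/gap information of $\cl_\phi$ together with a convexity-type argument. First I would set $v = u - \phi$ and note that by Theorem \ref{theo:chen} both $\phi$ and $u$ are strictly monotone with $-1<\phi,u<1$, and moreover by the sharp asymptotics of Theorem \ref{theo:10} (which $u$ must also satisfy, since the argument deriving \eqref{300}--\eqref{320} uses only the equation and the boundary conditions) we have $1-u^2, 1-\phi^2 \in L^2\cap L^\infty$ with matching algebraic decay, so $v\in L^2(\rone)$ and in fact $v$ is smooth and decaying. Subtracting the two equations $D^\al u + u(u^2-1)=0$ and $D^\al \phi + \phi(\phi^2-1)=0$ gives
\begin{equation}
 D^\al v + (u^3 - \phi^3) - (u-\phi) = 0,
\end{equation}
and writing $u^3-\phi^3 = (u^2 + u\phi + \phi^2)v$ this becomes $D^\al v + (u^2+u\phi+\phi^2 - 1)v = 0$. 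The key algebraic observation is that $u^2 + u\phi + \phi^2 = 3\phi^2 + (\phi+u)(u-\phi) + \big((u-\phi)^2 - \phi(u-\phi)\big)/\,\cdots$; more cleanly, $u^2+u\phi+\phi^2 - 3\phi^2 = (u-\phi)(u+2\phi) = v(v+3\phi)$, so the equation for $v$ reads
\begin{equation}
 \cl_\phi v = D^\al v + (3\phi^2 - 1) v = -\,v^2(v + 3\phi) = -\,v^3 - 3\phi v^2.
\end{equation}

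Next I would pair this with $v$ in $L^2$: since $v\in \dot H^{\al/2}$ (from the decay of $v$ and the same Fourier argument as in the third remark after Theorem \ref{theo:10}, $\al>1$ is exactly what makes this work) and $\cl_\phi\ge 0$,
\begin{equation}
 0 \le \langle \cl_\phi v, v\rangle = -\int_{\rone} v^4 - 3\int_{\rone} \phi v^3 \, dx.
\end{equation}
This alone is not sign-definite, so here is where the spectral gap \eqref{ep:10} enters. I would decompose $v = \beta \phi' + w$ with $w \perp \phi'$; because both $u$ and $\phi$ are odd, $v$ is odd while $\phi'$ is even, so $\langle v,\phi'\rangle = 0$ automatically, giving $\beta = 0$ and hence $v \perp \phi'$, i.e. $v$ lies in the subspace where $\cl_\phi > 1$. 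Therefore $\langle \cl_\phi v, v\rangle \ge \|v\|_{L^2}^2$, and combining with the identity above,
\begin{equation}
 \|v\|_{L^2}^2 \le -\int_{\rone} v^4 - 3\int_{\rone}\phi v^3\, dx \le 3\|\phi\|_{L^\infty}\|v\|_{L^3}^3 \le 3\|v\|_{L^3}^3,
\end{equation}
where I drop the nonpositive $-\int v^4$ term and use $|\phi|<1$. Now I need to upgrade this into a rigidity statement; the natural move is a bootstrap/smallness argument. Since $v=u-\phi\to 0$ at $\pm\infty$ and is continuous, $\|v\|_{L^\infty}\le 2$; interpolating $\|v\|_{L^3}^3 \le \|v\|_{L^\infty}\|v\|_{L^2}^2$ gives $\|v\|_{L^2}^2 \le 6\|v\|_{L^\infty}\|v\|_{L^2}^2$, which is not yet a contradiction since $6\|v\|_{L^\infty}$ need not be $<1$.

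The main obstacle, then, is closing this gap: the raw $L^\infty$ bound on $v$ is too weak, and I expect this is exactly the delicate point of the proof. My plan to overcome it is a continuity/connectedness argument in a parameter: introduce the family $u_t$ interpolating appropriately, or — more robustly — run the argument on the \emph{difference of energies} rather than pointwise. Concretely, I would use that $\phi$ is a local minimizer of the functional $I$ in \eqref{48} (established in Theorem \ref{theo:10}) together with the fact that any other kink $u$ is a critical point with $1-u^2\in L^2$, and show via the second-variation bound $\cl_\phi|_{\{\phi'\}^\perp}>1$ that the ``basin'' around $\phi$ in the relevant energy space contains no other critical point — the strict inequality in \eqref{ep:10} is what promotes a non-strict local minimum into an isolated one, and then a mountain-pass / Morse-theoretic or direct Taylor-expansion estimate $I[u] - I[\phi] \ge c\|u-\phi\|^2 - C\|u-\phi\|^3$ forces $u=\phi$ once one knows a priori (again from the matching sharp asymptotics) that $u-\phi$ is small in the energy norm, or alternatively that the set of kinks is connected and the estimate propagates. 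I would carry out: (i) the algebraic reduction to $\cl_\phi v = -v^3 - 3\phi v^2$; (ii) the orthogonality $v\perp\phi'$ from oddness; (iii) the energy inequality $\|v\|_{L^2}^2 \le 3\|v\|_{L^3}^3$; (iv) the cubic-Taylor expansion of $I$ around $\phi$ and the a priori smallness of $v$ in $\dot H^{\al/2}\cap L^2$ coming from the shared asymptotic profile \eqref{320}, which together yield $v\equiv 0$; and finally (v) remove the normalization by translation to get uniqueness modulo translations. Step (iv) — genuinely extracting smallness of $\|u-\phi\|$, not just of its tail — is where I anticipate having to work hardest, likely by combining the decay estimates \eqref{300} with an ODE-type argument near the origin to control $v$ on a compact set.
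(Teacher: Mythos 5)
Your setup — subtracting the two equations to get $D^\al v+(u^2+u\phi+\phi^2-1)v=0$ for $v=u-\phi$, and observing that oddness forces $v\perp\phi'$ — matches the paper's proof exactly. But the route you then take has a genuine gap, and it is precisely at the point you flag as ``where I anticipate having to work hardest.'' By expanding around $\cl_\phi$ alone you are left with the cubic remainder $\cl_\phi v=-v^3-3\phi v^2$, and the resulting inequality $\|v\|_{L^2}^2\le 3\|v\|_{L^3}^3$ cannot be closed without smallness of $v$. There is no a priori smallness: $u$ is an arbitrary competing kink, not a perturbation of $\phi$, and the matching tail asymptotics \eqref{320} give no control of $u-\phi$ on compact sets. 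The local-minimizer / mountain-pass plan in your step (iv) is not carried out and would require exactly the isolatedness you are trying to prove.

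The paper closes the argument with two ingredients you do not use. First, a pointwise bound: since $u$ and $\phi$ are both odd, monotone increasing and valued in $[-1,1]$ (the former by Theorem \ref{theo:chen}), they have the same sign at every $x$, hence $\sup_x|u(x)-\phi(x)|\le 1$ --- much better than your $\|v\|_{L^\infty}\le 2$, and this is where the threshold $1$ in \eqref{ep:10} comes from. Second, the positivity of the linearized operator at $u$ itself, $\cl_u\ge 0$ (from $\cl_u u'=0$ with $u'>0$ and the Perron--Frobenius property), combined with the exact symmetrized identity
\begin{equation*}
\cl_{u,\phi}:=D^\al-1+(u^2+u\phi+\phi^2)=\f{1}{2}\left(\cl_u+\cl_\phi\right)-\f{(u-\phi)^2}{2}.
\end{equation*}
This makes the remainder \emph{quadratic} and pointwise bounded by $\f12$, rather than cubic with an uncontrolled coefficient: since $\cl_{u,\phi}v=0$ and $\cl_u\ge0$, one gets $\cl_{u,\phi}\ge\f12(\cl_\phi-1)$, which restricted to $\{\phi'\}^\perp$ is strictly positive under \eqref{ep:10}, forcing $v=0$ with no smallness needed. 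In short: your steps (i)--(iii) are sound and parallel the paper, but step (iv) is the missing idea, and the fix is not to extract smallness of $v$ but to bring in $\cl_u\ge0$ and the bound $|u-\phi|\le1$.
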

{\bf Remark:} Per Figure~\ref{fig:2} below, showcasing the second
smallest eigenvalue of the relevant operator as identified 
numerically, it becomes clear that the condition \eqref{ep:10} does indeed hold, for the numerically obtained kink $\phi$, and hence it is unique.  

\begin{figure}[h]  
\label{fig:2}
    \centering
    \includegraphics[width=0.8\textwidth]{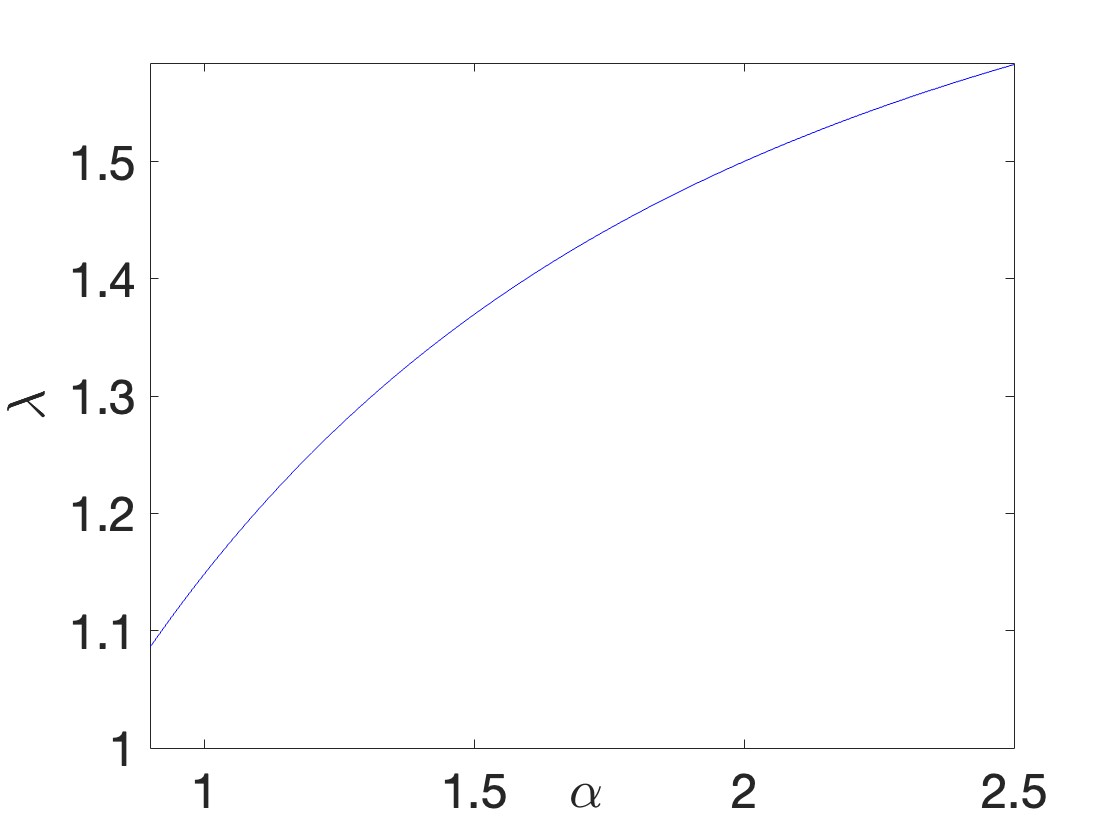}  
    \caption{The present figure effectively
    encompasses the results of our continuation
    for the kink waveform of the
    fractional wave PDE with $\alpha \in [0.9,2.5]$. What is shown is the 2nd
    eigenvalue of the (opposite of the) linearization operator,
    i.e., $\la_1(\cl_\phi)$ as a function of
    $\alpha$. It can be clearly discerned that
    thoughout the relevant interval and indeed
    for all values of $\alpha$ higher than those
    shown, this eigenvalue will satisfy the
    conditions of Theorem~\ref{theo:un},
    ensuring not only the spectral
    stability, but also the uniqueness of
    the relevant kinks. }
    \label{fig:atan_frac_f4}  
\end{figure}

Next, we discuss the kinks, in the context of the time dependent problem that they arise from, i.e., \eqref{10}. 

\subsection{Asymptotic stability with phase}

In the framework of the fractional parabolic problem \eqref{10} of which $\phi$ is a stationary solution, the positivity of $\cl$ is nothing but a spectral stability result. Our next goal is to work out a true nonlinear stability result, which will be the subject of our Theorem \ref{theo:20} below. 

In order to initiate the discussion, consider the fractional Schr\"odinger operator $\cl_\phi$. This is a self-adjoint operator, with domain $H^\al(\rone)$. According to Theorem \ref{theo:10}, $\cl\geq 0$, with a positive eigenfunction $\phi'>0$ at zero eigenvalue. As we discuss later in the proof, see Section \ref{sec:3} below, such operators enjoy the Perron-Frobenius property, \cite{FL}.  That is - the eigenvalue of the bottom of the spectrum, in this case zero, is simple  and isolated. Since the potential $3(1-\phi^2)$ has sufficient decay, \eqref{300}, 
it follows by the Weyl's theorem, that the essential (or absolute continuous) spectrum of such operator is in fact $\si_{ess.}(\cl_\phi)=[2, +\infty)$. This assures the ``spectral gap property'' for $\cl_\phi$ - namely 
\begin{equation}
	\label{340} 
	\min \{\la>0: \la\in \si(\cl_\phi)\}=:\ka>0.
\end{equation}
Based on our numerical computations $1<\ka\leq 2$.
So, we just take the value of $\ka$, 
depending on the linearized operator, whatever it may be, in our statements.

\begin{theorem}
	\label{theo:20} 
	Let $\al\in (1,2)$. Then, the odd kink solution $\phi$ described in Theorem \ref{theo:10} is asymptotically stable with phase. More precisely, there exists $\eps_0>0$ and a constant $C$, so that whenever an initial datum $u_0\in \phi+H^\al(\rone)$, is so that $\|u_0-\phi\|_{H^\al(\rone)}<\eps_0$, then the Cauchy problem for the parabolic equation \eqref{10} has a global solution. 
	
	In addition, there is a differentiable function $\si(t):[0, \infty)\to \rone$, so that $\si'(t)\in L^1(\rone_+)$, so that 
	$$
	\|u(t,\cdot)-\phi(\cdot+\si(t))\|_{H^\al(\rone)}\leq C \|u_0-\phi\|_{H^\al(\rone)} e^{-\ka t} 
	$$
	where $\ka$ is the bottom of the positive spectrum, as in \eqref{340}. In addition, $\si_\infty:=\lim_{t\to \infty} \si(t)$ exists, with an    enhanced exponential rate of convergence towards it, 
	$$
	|\si(t)-\si_\infty|\leq C  \|u_0-\phi\|_{H^\al(\rone)}^2  e^{-2\ka t} 
	$$
	If the initial perturbation is odd, then $\si(t)=0$, and we simply have 
	$$
	\|u(t)-\phi\|_{H^\al(\rone)}\leq C \|u_0-\phi\|_{H^\al(\rone)} e^{-\ka t} 
	$$
	
\end{theorem}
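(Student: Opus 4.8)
The plan is to set up a modulated perturbation and exploit the spectral gap of $\cl_\phi$ on the orthogonal complement of the kernel. First I would write the solution of \eqref{10} as $u(t,x) = \phi(x+\si(t)) + v(t,x)$, where the translation parameter $\si(t)$ is chosen dynamically to impose the orthogonality condition $\langle v(t,\cdot),\phi'(\cdot+\si(t))\rangle = 0$ for all $t\ge 0$; the implicit function theorem guarantees that such a $\si$ exists and is differentiable as long as $\|v\|_{L^2}$ stays small, since $\langle \phi',\phi'\rangle>0$. Plugging this ansatz into \eqref{10} and using $D^\al\phi(\cdot+\si)+\phi(\cdot+\si)(\phi(\cdot+\si)^2-1)=0$, I get the perturbation equation
\[
v_t + \cl_{\phi(\cdot+\si)} v = -\si'(t)\,\phi'(\cdot+\si) + N(v),
\]
where $N(v) = -3\phi(\cdot+\si) v^2 - v^3$ collects the quadratic and cubic terms. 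Differentiating the orthogonality constraint in $t$ and pairing the equation with $\phi'(\cdot+\si)$ produces a scalar equation for $\si'$ of the form $\si'(t)(\langle\phi',\phi'\rangle + O(\|v\|)) = \langle N(v),\phi'\rangle + O(\|v\|\,\|v\|_{\dots})$, which yields the a priori bound $|\si'(t)|\lesssim \|v(t)\|_{L^2}^2$ (the linear-in-$v$ term drops out precisely because $\cl_\phi$ is self-adjoint and $\phi'$ is in its kernel).

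Next I would run the energy estimate for $v$. Taking the $\dot H^{\al/2}$-type pairing (more precisely, testing against $v$ in $L^2$ and against $\cl v$ to control the $H^\al$ norm) gives
\[
\frac{d}{dt}\,Q(v) \le -2\langle \cl_{\phi(\cdot+\si)} v, v\rangle_{\text{weighted}} + (\text{terms from }\si') + (\text{terms from }N(v)),
\]
and the coercivity input is the spectral gap \eqref{340}: on $\{\phi'(\cdot+\si)\}^\perp$ one has $\cl_{\phi(\cdot+\si)}\ge \ka\,\mathrm{Id}$ as a quadratic form (equivalently $\langle \cl v,v\rangle \ge \ka\|v\|_{L^2}^2$, upgraded to $\langle\cl v,v\rangle\gtrsim \ka\|v\|_{H^{\al/2}}^2$ after noting $D^\al + $ bounded potential). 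Because $\si'$ is quadratically small and $N(v)$ is at least quadratic, both contribute terms that are higher order in $\|v\|_{H^\al}$ and hence absorbable for $\ve_0$ small. This closes a Gronwall inequality of the form $\frac{d}{dt}\|v\|^2 \le -2\ka\|v\|^2 + C\|v\|^3$, giving global existence by continuity/bootstrap and the decay $\|v(t)\|_{H^\al}\le C\|v(0)\|_{H^\al} e^{-\ka t}$; the $H^\al$ local well-posedness needed to start the continuation argument is standard for \eqref{10} since $D^\al$ generates an analytic semigroup and the nonlinearity is locally Lipschitz on $H^\al$ for $\al>1/2$.

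Finally, the phase convergence follows by integrating the $\si'$ bound: $|\si'(t)|\le C\|v(t)\|_{L^2}^2 \le C\|v(0)\|_{H^\al}^2 e^{-2\ka t}$ is integrable, so $\si_\infty=\lim_{t\to\infty}\si(t)$ exists and $|\si(t)-\si_\infty|\le \int_t^\infty |\si'(s)|\,ds \le C\|u_0-\phi\|_{H^\al}^2 e^{-2\ka t}$, which is the enhanced rate claimed. The odd case is immediate: if $u_0-\phi$ is odd then by uniqueness the solution stays odd, $\phi$ is odd so $\phi'$ is even, hence $\langle v(t),\phi'\rangle=0$ automatically with $\si\equiv 0$, and one runs the same energy estimate directly without modulation. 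The main obstacle I expect is the coercivity step — transferring the form inequality $\cl_{\phi}\ge \ka$ on $\{\phi'\}^\perp$ to the \emph{moving} operator $\cl_{\phi(\cdot+\si(t))}$ with the \emph{moving} orthogonality condition, uniformly in $t$, and simultaneously upgrading it from an $L^2$-gap to control of the full $H^\al$ norm while keeping all error terms (from $\si'$, from the potential difference $3(\phi(\cdot+\si)^2-\phi^2)$, and from $N(v)$) strictly higher order; this is where the bulk of the careful bookkeeping lives, though none of it is conceptually deep once the spectral gap \eqref{340} is in hand.
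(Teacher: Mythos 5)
Your proposal is correct in outline, but it departs from the paper's proof in two substantive ways, and it is worth recording what each choice buys. First, you impose the \emph{moving} orthogonality condition $\langle v(t,\cdot),\phi'(\cdot+\si(t))\rangle=0$ and linearize around the translated kink, whereas the paper (Lemma \ref{le:45} and \eqref{420}) keeps the \emph{fixed} condition $v\perp\phi'$ and the fixed operator $\cl=\cl_\phi$, absorbing the potential difference $3\bigl(\phi^2(\cdot+\si)-\phi^2\bigr)v$ into the source term $N$. Your choice makes the modulation equation cleaner: $N(v)=-3\phi(\cdot+\si)v^2-v^3$ is genuinely quadratic, so $|\si'(t)|\lesssim\|v(t)\|_{L^2}^2$ falls out immediately and delivers the enhanced $e^{-2\ka t}$ rate for $\si'$ with no extra work; in the paper's fixed frame the term $2\si(t)v\int_0^1\phi\phi'(\cdot+\tau\si)\,d\tau$ is only $O(|\si|\,\|v\|)$ and the claimed quadratic decay of $\si'$ requires more careful bookkeeping. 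Also, the ``main obstacle'' you flag at the end is not actually one: by translation invariance $\cl_{\phi(\cdot+\si)}$ restricted to $\{\phi'(\cdot+\si)\}^\perp$ has exactly the same spectral gap $\ka$ as $\cl_0=P_{\{\phi'\}^\perp}\cl P_{\{\phi'\}^\perp}$, uniformly in $\si$, so the coercivity transfers for free; the genuine cost of the moving frame is only the time dependence of the operator in the energy identity, which contributes $\tfrac12\langle\dot\cl\,v,v\rangle=O(|\si'|\,\|v\|^2)$ and is higher order. Second, you close the argument by an energy/Gronwall inequality $\tfrac{d}{dt}\|v\|^2\le-2\ka\|v\|^2+C\|v\|^3$, while the paper runs Duhamel's formula \eqref{480} with the semigroup bound $\|e^{-t\cl_0}\|_{H^s\to H^s}\le e^{-\ka t}$ from \eqref{351} and a bootstrap on the ansatz \eqref{510}. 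These are equivalent in substance; note only that the raw Gronwall inequality yields the rate $\ka-O(\sqrt{\eps_0})$ rather than $\ka$ on the nose, so to match the stated conclusion you need a second pass (reinserting the preliminary decay into the cubic term, or integrating the inequality as the paper does), which is routine but should be said. Your treatment of the odd case and of the convergence of $\si(t)$ to $\si_\infty$ agrees with the paper.
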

{\bf Remarks:} 
\begin{enumerate}
	\item The required smoothness of the perturbation above, $u_0-\phi\in H^\al$ is likely a  technical assumption, and it is almost certainly  not  optimal. This is dictated by some well-posedness theory and the (technical) requirement that $\si$ be differentiable. 
\end{enumerate}

\subsection{Kinks for the fractional wave equation}
We have already discussed the traveling kink solutions of \eqref{10}, namely they do satisfy the elliptic problem  \eqref{35}. We will present a result detailing the existence of such solutions, as well as their monotonicity and other properties, see Theorem \ref{theo:30} below. 

Let us now concentrate on the corresponding spectral  stability problem.  For a solution $\phi_c: |c|<1$ of \eqref{35}, set $u(t,x)=\phi_c(x- ct)+v(t,x-ct)$ in \eqref{20}. After ignoring all terms $O(v^2)$, we obtain the following linearized system
\begin{equation}
	\label{lin:10} 
	v_{tt}-2 c v_{tx} -(1-c^2) v_{xx}+D^\al v + 2 v- 3(1-\phi_c^2)v=0.
\end{equation}
Upon introducing the self-adjoint linearized operator $\cl:=-(1-c^2)\p_x^2+ D^\al+2-3(1-\phi^2)$, we can rewrite \eqref{lin:10} in an equivalent, first order in time system
\begin{equation}
	\label{lin:20} 
	\begin{pmatrix}
		v \\ z 
	\end{pmatrix}_t=\begin{pmatrix}
	0 & Id \\  -Id & 2c\p_x 
\end{pmatrix} \begin{pmatrix}
\cl & 0 \\ 0 & Id
\end{pmatrix} \begin{pmatrix}
v \\ z 
\end{pmatrix}
\end{equation}
Passing to the eigenvalue form $\begin{pmatrix}
	v(t,x) \\ z(t,x)
\end{pmatrix}\to e^{\la t} \begin{pmatrix}
v(x) \\ z(x) 
\end{pmatrix}$, we reduce the system to 
\begin{equation}
	\label{lin:30} 
	\begin{pmatrix}
		0 & Id \\  -Id & 2c\p_x 
	\end{pmatrix} \begin{pmatrix}
		\cl & 0 \\ 0 & Id
	\end{pmatrix} \begin{pmatrix}
		v \\ z 
	\end{pmatrix}=\la \begin{pmatrix}
	v \\ z 
\end{pmatrix}.
\end{equation}
Naturally, we say that {\it the kink is spectrally stable}, if the eigenvalue problem \eqref{lin:30} \underline{does not} have non-trivial solutions $(\la, \begin{pmatrix}
	v \\ z 
\end{pmatrix}): \Re \la>0,\begin{pmatrix}
v \\ z 
\end{pmatrix}\in D(\begin{pmatrix}
0 & Id \\  -Id & 2c\p_x 
\end{pmatrix} \begin{pmatrix}
\cl & 0 \\ 0 & Id
\end{pmatrix})$. 

We have the following existence and stability result. 
\begin{theorem}
	\label{theo:30} 
	Let $1<\al<2$. Then, \eqref{35} has an  odd solution $\phi: \lim_{x\to \pm \infty} \phi(x)=\pm 1$. In addition,  
	\begin{enumerate}
		\item   $\phi\in L^\infty(\rone) \cap 
		\left(\cap_{s\geq \f{\al}{2}} \dot{H}^s(\rone)\right)$, \ \ $\phi'\in \cap_{s\geq 0} \dot{H}^s(\rone)$.
		\item   The kink $\phi$ is monotone increasing and $-1 <\phi(x)<1$. Also, 
		\begin{equation}
			\label{3102} 
		|\phi'(x)|\leq C (1+|x|)^{-1-\al}, \ \ |\phi(x)-sgn(x)|\leq C (1+|x|)^{-\al}.
		\end{equation}
		In fact,  $\phi$ obeys  exactly the same  asymptotics as in \eqref{310}, \eqref{320}. 
		\item The linearized operator is non-negative
		$$
		\cl= -(1-c^2)\p_x^2+ D^\al+2-3(1-\phi^2)\geq 0
		$$ 
	\end{enumerate}
In addition, the kink $\phi$ is spectrally stable. More specifically, the eigenvalue problem \eqref{lin:30} does not have non-trivial solutions. 
\end{theorem}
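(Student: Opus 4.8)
The plan is to obtain parts (1)--(3) by transcribing the variational construction behind Theorem \ref{theo:10}, with $D^\al$ replaced by the second-order operator $-(1-c^2)\p_x^2+D^\al$, and then to deduce the spectral stability from the positivity of $\cl$ by an elementary quadratic-equation argument. For existence, fix the odd reference profile $W=\tanh(x/\sqrt{2})$ from \eqref{25} and minimize
\[
I_c[u]=\intl_\rone\left[\f{1-c^2}{2}|u'|^2+\f12|D^{\f\al2}u|^2+\f14(1-u^2)^2\right]dx
\]
over $u\in W+\{v\in H^1(\rone):v\ \textup{odd}\}$. Since $|c|<1$ the weight $1-c^2$ is positive, and since $H^1(\rone)\hookrightarrow\dot{H}^{\al/2}(\rone)$ for $\al\le2$ the functional is finite; it is coercive in $H^1$ and weakly lower semicontinuous (the quartic term by local compactness and nonnegativity), while the odd constraint breaks translation invariance, so the direct method produces a minimizer $\phi$, which solves \eqref{35} and --- because $\phi-W\in H^1(\rone)\subset C_0(\rone)$ --- has the right endpoint limits. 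Elliptic bootstrapping gives $\phi\in C^\infty(\rone)$ and the stated memberships $\phi\in\cap_{s\ge\al/2}\dot{H}^s$, $\phi'\in\cap_{s\ge0}\dot{H}^s$; here the extra local term $-(1-c^2)\p_x^2$ only helps the regularity.

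For the sharp asymptotics \eqref{3102} and the expansions \eqref{310}--\eqref{320}, the tail of $\phi$ is governed by the symbol of the linearization at the endpoints $\pm1$, namely $m(\xi)=(1-c^2)(2\pi\xi)^2+(2\pi|\xi|)^\al+2$. Since $\al<2$, the only summand in $m$ that fails to be real-analytic at $\xi=0$ is $(2\pi|\xi|)^\al$ --- exactly the term present in the pure-fractional case of Theorem \ref{theo:10} --- whereas the new piece $(1-c^2)(2\pi\xi)^2$ is smooth there and so affects only the faster-decaying corrections. Consequently the leading-order asymptotics of $\phi'$ and of $\phi-sgn(x)$, constants included, coincide with those in Theorem \ref{theo:10}.

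For item (3), differentiating \eqref{35} yields $\cl[\phi']=0$; the asymptotics just established force $\phi'\in L^2(\rone)$, and $\cl=-(1-c^2)\p_x^2+D^\al+2-3(1-\phi^2)$ is a self-adjoint Schr\"odinger-type operator enjoying the Perron--Frobenius property \cite{FL}, so a sign-definite eigenfunction can only live at the bottom of the spectrum. Hence $\phi'$ may be taken strictly positive, $0=\la_0(\cl)=\min\si(\cl)$, i.e.\ $\cl\ge0$; in particular $\phi$ is strictly increasing and $-1<\phi<1$, as in Theorem \ref{theo:10}.

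Finally, for spectral stability let $(\la,(v,z))$ be a nontrivial solution of \eqref{lin:30} in the domain of the generator. The first row gives $z=\la v$, so $v\ne0$, and the second row collapses the system to the scalar equation $\la^2v-2c\la\,v'+\cl v=0$. Pairing with $v$ in $L^2(\rone)$ --- legitimate since $v$ lies in the domain, hence $v\in H^1(\rone)\subset L^2(\rone)$ and $\cl v\in L^2(\rone)$ --- gives $\la^2\norm{v}^2-2c\la\langle v',v\rangle+\langle\cl v,v\rangle=0$. Now $\langle v',v\rangle=i\tau$ is purely imaginary ($\int_\rone\p_x|v|^2=0$) with $\tau\in\rone$, and $q:=\langle\cl v,v\rangle$ is real with $q\ge0$ by item (3). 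Solving the quadratic $\norm{v}^2\la^2-2ci\tau\,\la+q=0$ then gives
\[
\la=\f{i\bigl(c\tau\pm\sqrt{\,c^2\tau^2+q\,\norm{v}^2\,}\bigr)}{\norm{v}^2}\in i\rone,
\]
since $c^2\tau^2+q\norm{v}^2\ge0$; thus $\Re\la=0$, so \eqref{lin:30} has no eigenvalue with $\Re\la>0$, which is the asserted spectral stability. The part requiring the most care is the asymptotic analysis certifying that adding $-(1-c^2)\p_x^2$ does not change the sharp constants; the stability step itself is immediate once $\cl\ge0$ is in hand, the underlying mechanism being that positivity of $\cl$ together with the skew-symmetry of the convection term $2c\p_x$ pins the dispersion relation to the imaginary axis.
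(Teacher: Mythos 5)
Your reduction of the spectral problem to the scalar pencil $\la^2\norm{v}^2-2c\la\dpr{v'}{v}+\dpr{\cl v}{v}=0$, together with the observations that $\dpr{v'}{v}$ is purely imaginary and $\dpr{\cl v}{v}\geq 0$, is correct and is an efficient equivalent of the paper's argument (which instead pairs $\ch\vec f=\la\cj\vec f$ with $\vec f$ and uses the reality/skewness of the two quadratic forms). Likewise, your symbol-level explanation that the extra term $(1-c^2)(2\pi\xi)^2$ is analytic at $\xi=0$ and therefore cannot alter the leading tail constants is exactly the mechanism behind the paper's Lemma \ref{g:11}, which makes it rigorous by integrating by parts in the difference of the two Green's kernels. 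The difficulties are upstream, in parts (1)--(3).

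First, the direct method as you set it up does not close. Over $W+\{v\in H^1(\rone),\ v\ \textup{odd}\}$ the functional controls $\|v'\|_{L^2}$ and $\|2Wv+v^2\|_{L^2}$ but \emph{not} $\|v\|_{L^2}$: since $2Wv+v^2=v(2W+v)$, a minimizing sequence may satisfy $v_n\approx-2W$ on large sets drifting to infinity, so it need not be bounded in $H^1$, and a (locally uniform) limit need not lie in the admissible class nor satisfy $\lim_{x\to\pm\infty}\phi(x)=\pm1$. This is precisely the loss of compactness the paper flags and circumvents with the penalized functionals \eqref{50}; the weight $\eps\ln(e+x^2)$ restores $L^2$-precompactness for each fixed $\eps$, and the boundary limits are recovered a posteriori from $q\in L^2$ combined with monotonicity. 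Your coercivity claim is therefore unjustified as stated. Second, your derivation of $\cl\geq 0$ is circular: you assert that $\phi'$ is sign-definite in order to place $0$ at the bottom of the spectrum, whereas sign-definiteness of $\phi'$ is a \emph{consequence} of $\cl\geq 0$ plus Perron--Frobenius, not an input. The paper obtains $\cl_\eps\geq 0$ from the second variation at the \emph{unconstrained} minimizers $u_\eps$ and passes to the limit $\eps\to 0$; note that minimizing only over odd competitors, as you propose, would a priori give nonnegativity of the second variation only on the odd subspace, which is why the paper first shows (Lemma \ref{le:pol}) that unconstrained minimizers are automatically odd. Finally, the Perron--Frobenius property for $\cl=-(1-c^2)\p_x^2+D^\al+2-3(1-\phi^2)$ requires checking positivity of the semigroup kernel, which the paper does by factoring $e^{-t(-(1-c^2)\p_{xx}+D^\al)}=e^{t(1-c^2)\p_{xx}}e^{-tD^\al}$ as a convolution of two positive kernels; this step should not be cited as automatic.
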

{\bf Remarks:} The main interest in the resolution of \eqref{35} is in the fact that it  features two different (note both sub-Laplacian) dispersions, and yet the argument is versatile enough so that we are still able to resolve it, and conclude the monotonicity. 

\subsection{Existence of kinks: super-Laplacian case}
As it was discussed in the introduction, it is a valid mathematical question whether or not, the model \eqref{30} has kink solutions   for any $\al>0$. We have already addressed the case $\al\in (1,2)$, with the case $\al\in (0,1)$ left for future investigations. The case $\al=2$ is of course classical, as previously mentioned, with explicit solution $W(x)=\tanh[\f{x}{\sqrt{2}}]$, which has exponential decay for $|W^2(x)-1|+|W'(x)|$, see \eqref{25}.  

We would like to consider the cases $\al\in (2,4)$. One should keep in mind that while the solution for the case  $\al=4$ may not be explicitly known, exponential  decay as in \eqref{25} is expected.  
We note in passing that this case for the kinks has been numerically
studied, e.g., in~\cite{DeckerCNSNS2021}.
As a similar situation develops for $\al\in (4,6)$ etc, we will not discuss those cases any further. 

We have the following result. 
\begin{theorem}
	\label{theo:40} 
	Let $\al\in (2,4)$. Then, the equation \eqref{30}, subject to $\lim_{x\to \pm \infty} \phi(x)=\pm 1$, 
	has {\bf an odd} solution $\phi\in L^\infty(\rone) \cap 
	\left(\cap_{s\geq \f{\al}{2}} \dot{H}^s(\rone)\right)$ and so $\phi\in C^\infty(\rone)$. In addition,  
	\begin{enumerate}
		\item   The kink $\phi$ satisfies  
		\begin{equation}
		\label{fr:10} 
			|\phi'(x)|\leq C (1+|x|)^{-1-\al}, \ \ |\phi(x)-sgn(x)|\leq C (1+|x|)^{-\al}.
		\end{equation}
	and 
		\begin{eqnarray}
			\label{fr:20} 	
			\phi'(x) &=&-\f{2^{\al-3}\al(\al-1)(\al-2)}{\sqrt{\pi}} 
			\f{\Ga\left(\f{\al-1}{2}\right)}{\Ga\left(\f{4-\al}{2}\right)}|x|^{-1-\al}+O(|x|^{-5}), x>>1  \\
			\label{fr:23} 
			\phi(x) &=&  1+ \f{2^{\al-3}(\al-1)(\al-2)}{\sqrt{\pi}} 
			\f{\Ga\left(\f{\al-1}{2}\right)}{\Ga\left(\f{4-\al}{2}\right)}|x|^{-\al}+O(|x|^{-4}), x>>1
		\end{eqnarray}
		\item $\phi'\in \cap_{s\geq 0} \dot{H}^s(\rone)$ 
		\item The linearized operator 
		$$
		\cl:= D^\al+2-3(1-\phi^2)\geq 0
		$$ 
		and $\cl[\phi']=0$.  
	\end{enumerate}
\end{theorem}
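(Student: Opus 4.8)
The plan is to run the same variational scheme as in Theorem~\ref{theo:10}, but one must keep in mind that for $\al>2$ neither the Perron--Frobenius property of $D^\al+V$ nor the comparison-principle monotonicity of \cite{chen} survives; this is exactly why the conclusion here is weaker (no bound $|\phi|<1$, no monotonicity --- indeed \eqref{fr:23} forces the ``overshoot'' $\phi(x)>1$ for $x\gg1$). First, fix the smooth odd profile $W(x)=\tanh\!\left(\f{x}{\sqrt2}\right)$, which is admissible because $\al>1$ gives $D^{\f{\al}{2}}W\in L^2(\rone)$, and note $D^\al W=-D^{\al-2}(W^3-W)$, where $W^3-W$ decays exponentially together with all its derivatives, so $D^\al W\in L^1(\rone)\cap L^2(\rone)$. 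One then seeks $\phi=W+\psi$, with $\psi$ in the \emph{odd} part of $X:=\dot H^{\f\al2}(\rone)\cap L^2(\rone)$, minimizing the renormalized energy
$$
I[\psi]=\f12\int_{\rone}\left(|D^{\f\al2}(W+\psi)|^2-|D^{\f\al2}W|^2\right)dx+\f14\int_{\rone}\left((W+\psi)^2-1\right)^2dx .
$$
Since $\f\al2>\f12$ gives $X\hookrightarrow L^q(\rone)$ for all $q\in[2,\infty]$, $I$ is well defined on $X$; coercivity follows from $\f14\int((W+\psi)^2-1)^2\gtrsim\f14\|\psi\|_{L^4}^4-C\|\psi\|_{L^2}^2-C$ together with $|\langle D^\al W,\psi\rangle|\lesssim\|\psi\|_{L^2}$, and weak lower semicontinuity is routine. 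A minimizing sequence is bounded in $X$; because we restrict to odd functions the translation invariance is frozen, and a concentration--compactness argument (vanishing excluded by the quartic term near $x=0$, dichotomy by strict subadditivity) yields a minimizer $\psi_*$, whose Euler--Lagrange equation is exactly $D^\al\phi+\phi(\phi^2-1)=0$ with $\phi:=W+\psi_*$ odd and $\phi\to\pm1$ at $\pm\infty$.

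For the smoothing and item (2): $\phi\in L^\infty$ from the embedding, $\phi^2-1=(W^2-1)+2W\psi_*+\psi_*^2\in L^2$, hence $\phi(\phi^2-1)\in L^2$ and $D^\al\phi\in L^2$; differentiating the equation gives $\cl[\phi']=0$ with $\cl=D^\al+2-3(1-\phi^2)$ and $\phi'=W'+\psi_*'\in L^2$. Rewriting $D^\al\phi'=(3(1-\phi^2)-2)\phi'$ and bootstrapping (products of $H^s$ functions with $s>\f12$ stay in $H^s$, and $1-\phi^2\in H^s$ for all $s$ by the same bootstrap) gives $\phi'\in\cap_{s\ge0}H^s$, hence $\phi\in C^\infty(\rone)$, $\phi\in\cap_{s\ge\f\al2}\dot H^s(\rone)$, and $\phi'\in\cap_{s\ge0}\dot H^s(\rone)$.

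The sharp asymptotics (item (1)) are where the real work lies. From $\cl[\phi']=0$ and the invertibility of $D^\al+2$ (spectrum in $[2,\infty)$) one gets the integral identity $\phi'=(D^\al+2)^{-1}\!\left[3(1-\phi^2)\phi'\right]$, whose right-hand side decays much faster than $\phi'$ in the tail. The Green's function $G$ of $D^\al+2$, with symbol $((2\pi|\xi|)^\al+2)^{-1}$, is smooth away from $0$, lies in $L^1(\rone)$, and has the low-frequency expansion $\widehat G(\xi)=\f12-\f{(2\pi)^\al}{4}|\xi|^\al+O(|\xi|^{2\al})$; since $\al\in(2,4)$ the non-analytic term $|\xi|^\al$ controls the tail, and the one-dimensional formula for the inverse Fourier transform of $|\xi|^\al$ --- which carries exactly the $\Ga$-factors in \eqref{fr:20} --- gives $G(x)=c_\al|x|^{-1-\al}+O(|x|^{-1-2\al})$ for $|x|>1$, with explicit $c_\al$. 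A first bootstrap (convolving the decaying source against $G\in L^1$) upgrades the a priori decay of $\phi'$ and $1-\phi^2$ to \eqref{fr:10}; feeding this back and expanding $G(x-y)$ about $x$ yields $\phi'(x)=\left(\int_{\rone}3(1-\phi^2)\phi'\right)G(x)+(\text{l.o.t.})$, and since $\int_{\rone}3(1-\phi^2)\phi'\,dx=3\left[\phi-\f{\phi^3}{3}\right]_{-\infty}^{\infty}=4\neq0$ one obtains $\phi'(x)=4\,G(x)+(\text{l.o.t.})$; simplifying the $\Ga$-factors (e.g.\ $\Ga(\f{4-\al}{2})=\f{2-\al}{2}\Ga(\f{2-\al}{2})$) reproduces \eqref{fr:20}, and integrating from $+\infty$ gives \eqref{fr:23}. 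The remainders $O(|x|^{-5})$, $O(|x|^{-4})$ come from the next orders of $\widehat G$ and of the source, the odd symmetry killing the $G'(x)$ contribution.

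The remaining piece of item (3), $\cl\ge0$, is the main obstacle. Here $\cl[\phi']=0$ is immediate, and the second variation of $I$ at the odd minimizer gives $\langle\cl\eta,\eta\rangle\ge0$ for every \emph{odd} $\eta\in X$, i.e.\ $\cl|_{\mathrm{odd}}\ge0$; moreover the decay \eqref{fr:10} and Weyl's theorem pin $\si_{ess}(\cl)=[2,+\infty)$, so only finitely many eigenvalues sit below $2$. The difficulty is non-negativity on the \emph{even} subspace: $\phi'$ is even but sign-changing (negative for $x\gg1$ by \eqref{fr:20}), so there is no positive ground state to exploit and the route of Theorem~\ref{theo:10} is closed. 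I would handle this by a continuation argument in $\al$ starting from the classical case: $\cl_{\al=2}=-\p_x^2+2-3\,\text{sech}^2\!\left(\f{\cdot}{\sqrt2}\right)$ is an explicitly diagonalizable P\"oschl--Teller operator with spectrum $\{0\}\cup\{\tfrac32\}\cup[2,+\infty)$, in particular $\cl_{\al=2}\ge0$ with a spectral gap and one-dimensional kernel $\mathrm{span}\{W'\}$, and one propagates $\cl_{\phi_\al}\ge0$ across $\al\in[2,4)$ using that $\phi_\al$ and $\cl_{\phi_\al}$ vary continuously, that \eqref{fr:10} keeps $\si_{ess}$ rigidly at $[2,+\infty)$, and that $\ker\cl_{\phi_\al}=\mathrm{span}\{\phi_\al'\}$ throughout, so no eigenvalue can cross $0$. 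Establishing this last non-degeneracy, together with a continuous selection of the kink in the absence of a uniqueness theorem, is the point I expect to be hardest; once it is in hand, $\cl\ge0$ holds on all of $L^2(\rone)$.
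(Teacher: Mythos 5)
Your overall architecture (variational construction of an odd kink, the integral identity $\phi'=3(D^\al+2)^{-1}[(1-\phi^2)\phi']$, and the low-frequency expansion $\widehat{G}(\xi)=\f12-\f{(2\pi)^\al}{4}|\xi|^\al+O(|\xi|^{2\al})$ feeding the Fourier transform of $|\xi|^\al$ to produce the $\Gamma$-factors in \eqref{fr:20}) matches the paper's Section 8 and Lemma \ref{g:12}. But there are two genuine gaps. First, in the existence step your coercivity claim does not close: the bound $\f14\int((W+\psi)^2-1)^2\gtrsim \f14\|\psi\|_{L^4}^4-C\|\psi\|_{L^2}^2-C$ gives no control of $\|\psi\|_{L^2}$ along a minimizing sequence (take $\psi$ of small amplitude spread over a long interval; then the $L^4$ term is negligible against the $L^2$ term), so boundedness in $X=\dot H^{\al/2}(\rone)\cap L^2(\rone)$ does not follow and concentration--compactness has nothing to bite on. The paper avoids exactly this by minimizing the $\eps$-penalized functionals \eqref{50} (the weight $\eps\ln(e+x^2)$ restores compactness), deriving $\eps$-uniform bounds on $\|v_\eps\|_{L^\infty}$, $\|D^{\al/2}v_\eps\|$ and on $q_\eps=2Wv_\eps+v_\eps^2$ in $L^2$ --- notably \emph{not} on $\|v_\eps\|_{L^2}$, which blows up like $\eps^{-1/2}$ --- and then passing to the limit $\eps\to0+$ (Propositions \ref{prop:10}, \ref{prop:20}, \ref{prop:40}).

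Second, and more importantly, your treatment of $\cl\ge0$ is not a proof: the continuation-in-$\al$ argument needs a continuously selected branch $\al\mapsto\phi_\al$ and the non-degeneracy $\ker\cl_{\phi_\al}=\mathrm{span}\{\phi_\al'\}$ along the whole branch, neither of which you establish (as you acknowledge). The obstacle you are trying to circumvent --- that the second variation at an odd-constrained minimizer only controls odd directions --- is illusory. Lemma \ref{le:pol} shows that for these functionals $I_\eps[u]\ge I_\eps[u_{\mathrm{odd}}]$ for \emph{arbitrary} admissible $u$, so an odd minimizer is automatically an unconstrained minimizer; hence the second variation is nonnegative in all directions, even ones included, giving $\cl_\eps\ge0$ on all of $L^2(\rone)$ and, in the limit, $\dpr{\cl \Psi}{\Psi}=\lim_n\dpr{\cl_{\eps_n}\Psi}{\Psi}\ge0$ for every test function $\Psi$. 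This one observation replaces your entire continuation scheme. (Your remark that the Perron--Frobenius property fails for $\al>2$ is correct and is precisely why monotonicity and $|\phi|<1$ are not claimed in Theorem \ref{theo:40}; but $\cl\ge0$ never needed Perron--Frobenius in the first place.)
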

{\bf Remarks:}

\begin{enumerate}
	\item The function $\phi'$, despite being an eigenfunction for the smallest  eigenvalue of $\cl$ is no longer assured to be positive. This is due to the fact that  $\cl$ does not have the Perron-Frobenius property, since $\al>2$.    In fact, the asymptotic formula \eqref{fr:20} confirms that, since it shows that $\phi'(x)<0$ {\bf for all large enough  $x$.  }
	\item  Related to the previous point, {\it $\phi$  no longer satisfies  $-1<\phi(x)<1$.} 
	In fact, according to the asymptotic \eqref{fr:23}, $\phi(x)>1$ for all large enough $x$. That is, $\phi(x)$ approaches $1$ from above! 
	\item The asymptotic \eqref{fr:20} is in fact equivalent to \eqref{310}, through the continuation formula for the $\Gamma$ function.
    \item Once again in this case, we test the asymptotics  against the numerical findings in 
    Fig.~\ref{fig:atan_frac_f2}. It is indeed found that the results
    of Theorem~\ref{theo:40}  provide an excellent asymptotic
    description, especially within the relevant loglog plot, of the
    tail of the kink, appropriately capturing, as is evident also
    from the top panel its approach to $\phi=1$ from above.
\end{enumerate}
 

\begin{figure}[h]  
    \centering
    \includegraphics[width=0.8\textwidth]{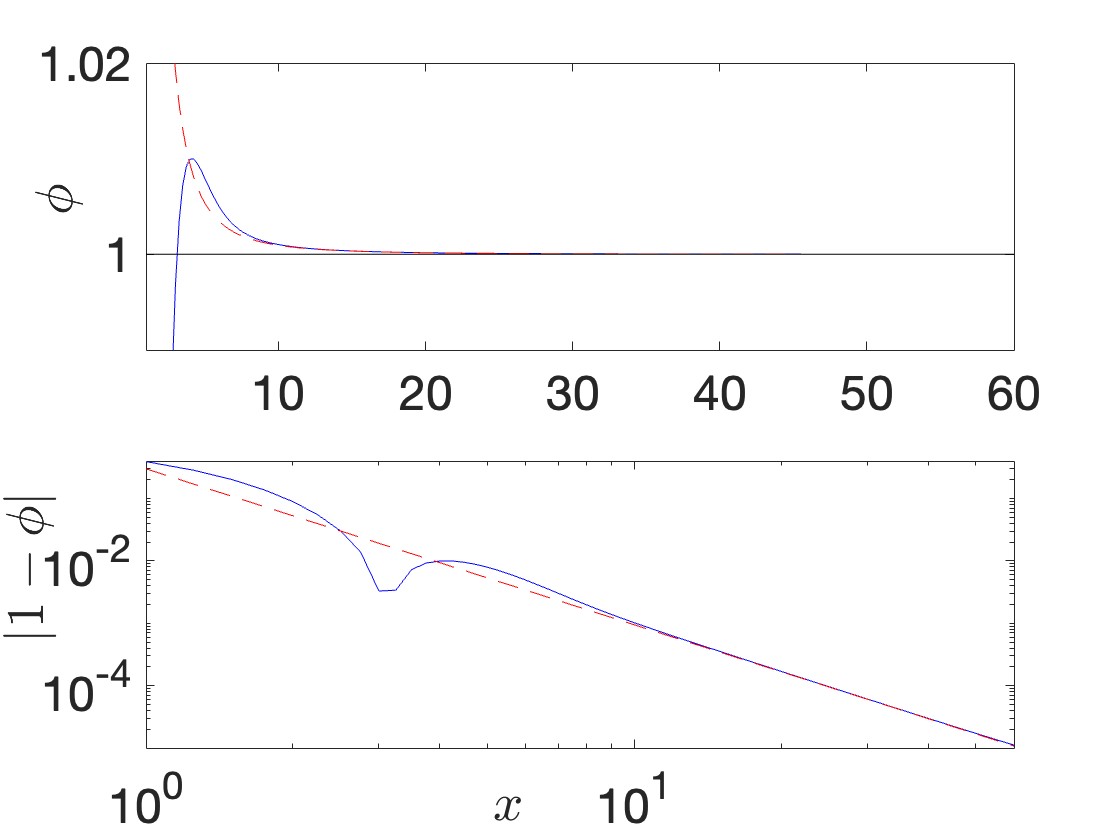}  
    \caption{Similarly to Fig.~\ref{fig:atan_frac_f1},
    in the top panel we showcase a 
    prototypical example of the decay of the 
 stationary $\phi^4$ kink to the asymptotic state
    of $\phi=1$ for the case of $\alpha=2.5$
    (the asymptotic value again denoted by the
    black line). The bottom panel shows the
    distance from the asymptotic value in a loglog
    plot. The prediction of Theorem~\ref{theo:40} 
    is shown by the dashed red line, while
    the stationary kink of the model is shown
    by the blue solid line. 
    Both the top and bottom panels clearly 
    illustrate the (single, in this case)
    crossing of $\phi=1$. In the loglog panel
    it appears as a ``dip''.
    Here, too, again both the linear and
    especially the loglog plot 
    exemplify the sharp alignment of the
    kink tail with the proposed
    asymptotics of the theorem at sufficiently
    large distances.}
    \label{fig:atan_frac_f2}  
\end{figure}

 \section{Preliminaries}
 \label{sec:2} 
 We start with some notation. The  $L^p$ spaces will be used in the standard fashion, $\|f\|_p=\left(\int_{\rone} |f(x)|^p\right)^{1/p}.$  The Fourier transform and its inverse in the form 
 $$
 \hat{f}(\xi)=\int_{-\infty}^{+\infty} f(x) e^{-2\pi i x\xi} dx,\ \ f(x) =\int_{-\infty}^{+\infty}  \hat{f}(\xi) e^{2\pi i x\xi} d\xi.
 $$
 for say $L^1(\rone)$ functions. 
Through the standard properties of the Fourier transform 
$\widehat{(-\p_{xx} f)}(\xi)=4\pi^2 \xi^2 \hat{f}(\xi)$, and the Zygmund operator is defined via 
$$
D =\sqrt{-\p_{xx}}, \ \ \widehat{D f}(\xi)=2\pi |\xi| \hat{f}(\xi).
$$
We will also routinely make use of the notation $D^s$ for an operator with a multiplier $(2\pi|\xi|)^s$. 

Alternatively and equivalently, one may introduce $D^s, s\in (0,1)$ as an operator acting as follows 
\begin{equation}
	\label{ds:10} 
	D^s u(x)=C_s \int_{\rone} \f{u(x)-u(y)}{|x-y|^{1+s}} dy.
\end{equation}
Note that the expression \eqref{ds:10} makes sense, if one only assumes that $u\in C^{1,1}_{loc.}(\rone)$ and \\ $\int_{\rone} \f{u(x)}{(1+|x|)^{1+s}}dx<\infty$, \cite{chen}. In particular, it is well-defined for $u$ bounded and $u\in C^{1,1}_{loc.}$. 

\subsection{Homogeneous tempered distributions}
Recall that  the Gamma function 
$$
\Ga(z)=\int_0^\infty t^{z-1} e^{-t} dt,
$$
is initially defined for all $\Re z>0$, but it is extendable to a meromorphic function on ${\mathbb C}$, with simple poles at $\{-k, k=0,1,2, \ldots\}$,  through the relation $\Ga(z)=\f{\Ga(z+1)}{z}$. 

For $z: -1<\Re z<0$, consider the tempered distribution (which is a locally integrable function),  
$$
u_z= \f{\pi^{\f{z+1}{2}}}{\Ga\left(\f{z+1}{2}\right)} |x|^z.
$$
One could eventually extend this definition to all $z\in {\mathbb C}$, see for example Section 2.4.3, \cite{Graf}. Specifically, for our purposes, say $-N-2<\Re z$, this can be accomplished through the formula (see  $(2.4.7)$, p. 128 in \cite{Graf})
\begin{eqnarray*}
  & &   \dpr{u_z}{f} = \int\limits_{|x|>1} f(x) u_z(x) dx+\int\limits_{|x|<1} u_z(x)\left(f(x)-\sum_{l=0}^N \f{f^{(l)}(0)}{l!} x^l\right) dx+ \sum_{l=0}^N \f{f^{(l)}(0)}{l!} \int\limits_{|x|<1} u_z(x)  x^l dx \\
& =&  \int\limits_{|x|>1} f(x) u_z(x) dx+\int\limits_{|x|<1} u_z(x)\left(f(x)-\sum_{l=0}^N \f{f^{(l)}(0)}{l!} x^l\right) dx+  \sum_{k=0}^{[N/2]} \f{f^{(2k)}(0)}{(2k)!}  \f{2\pi^{\f{z+1}{2}}}{(2k+z+1)\Ga\left(\f{z+1}{2}\right)},
\end{eqnarray*}
where we have used that for $z: z\neq -(2k+1)$
$$
\int_{|x|<1} |x|^z x^l dx=\left\{\begin{array}{cc}
     0 & l=2k+1 \\
     \frac{2}{2k+z+1} & l=2k
\end{array}    \right.
$$
 Note that the expression in the denominator allows even for $z=-(2k+1)$, due to the  analytic continuation in the denominator and  the poles of $\Ga(z)$. 

The following lemma provides the Fourier transform of this distribution, see Theorem 2.4.6 in \cite{Graf}. 
\begin{lemma}
	\label{le:126} 
	For all $z\in{\mathbb C}$, 
	\begin{equation}
		\label{126}
			\hat{u}_z=u_{-1-z}
	\end{equation}
\end{lemma}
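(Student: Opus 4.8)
The plan is to establish \eqref{126} first on the strip $-1<\Re z<0$, where $u_z$ is a locally integrable function of at most polynomial growth and hence an honest tempered distribution, and then to propagate it to all $z\in{\mathbb C}$ by holomorphy in the parameter. The observation that makes the continuation work is that, for each fixed $f\in\cs(\rone)$, the displayed extension formula for $\dpr{u_z}{f}$ is an \emph{entire} function of $z$: the apparent poles of the distribution $|x|^z$ at $z=-1,-3,-5,\dots$ (visible in the terms $\f{2\pi^{(z+1)/2}}{(2k+z+1)\Ga\left(\f{z+1}{2}\right)}$) are exactly cancelled by the zeros of $1/\Ga\!\left(\f{z+1}{2}\right)$. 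Hence both $z\mapsto\dpr{\hat u_z}{f}=\dpr{u_z}{\hat f}$ and $z\mapsto\dpr{u_{-1-z}}{f}$ are entire, and it suffices to prove \eqref{126} on the open strip.

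On the strip I would use heat–kernel subordination. From the integral representation of the Gamma function, the substitution $\tau=\pi tx^2$ gives, for $\Re z<0$, the absolutely convergent identity
\begin{equation*}
u_z=\f{\pi^{1/2}}{\Ga\!\left(\f{z+1}{2}\right)\Ga\!\left(-\f z2\right)}\int_0^\infty t^{-z/2-1}e^{-\pi t x^2}\,dt .
\end{equation*}
Pairing with $f\in\cs(\rone)$, Fubini applies (the constraint $\Re z<0$ makes the $t$-integral converge at $t=0$, while $-1<\Re z$ together with Schwartz decay handles $t=\infty$ and $x=\infty$), so one may compute $\hat u_z$ by taking the Fourier transform under the $t$-integral, using $\widehat{e^{-\pi t x^2}}(\xi)=t^{-1/2}e^{-\pi\xi^2/t}$. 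The substitution $t\mapsto 1/t$ converts the resulting $t$-integral into $\Ga\!\left(\f{z+1}{2}\right)(\pi\xi^2)^{-(z+1)/2}$, and collecting constants yields $\hat u_z=\f{\pi^{-z/2}}{\Ga\!\left(-\f z2\right)}|\xi|^{-1-z}=u_{-1-z}$, which is \eqref{126} on the strip.

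A variant on the strip that bypasses Fubini is to observe that $\hat u_z$ is an even tempered distribution, homogeneous of degree $-1-z$ (the Fourier transform sends degree-$z$ homogeneity on $\rone$ to degree-$(-1-z)$ homogeneity). When $-1<\Re z<0$ the degree $-1-z$ also lies in this strip, so any even homogeneous distribution of that degree is a constant multiple of $|x|^{-1-z}$ on $\rone\setminus\{0\}$, and the discrepancy, being supported at the origin and hence a combination of derivatives of $\delta$ (homogeneous of integer degrees $\le-1$), must vanish by degree reasons; thus $\hat u_z=c(z)|x|^{-1-z}$. The scalar $c(z)$ is then read off by testing against the self-dual Gaussian $g(x)=e^{-\pi x^2}$: from $\int_{\rone}|x|^{w}e^{-\pi x^2}\,dx=\pi^{-(w+1)/2}\Ga\!\left(\f{w+1}{2}\right)$ one gets $\dpr{u_z}{g}=1=\dpr{\hat u_z}{g}=c(z)\pi^{z/2}\Ga\!\left(-\f z2\right)$, so $c(z)=\pi^{-z/2}/\Ga\!\left(-\f z2\right)$, which is precisely $u_{-1-z}$.

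The main obstacle here is not depth but the bookkeeping at the exceptional exponents. Once \eqref{126} holds on the strip, holomorphy propagates it to all $z$, but one must make sure the two sides really have the \emph{same} limiting behavior at the discrete values where the distributions degenerate: $z\in\{0,2,4,\dots\}$, where $u_{-1-z}$ becomes a constant multiple of $\delta^{(2k)}$, and $z\in\{-1,-3,-5,\dots\}$, where $u_z$ itself is such a distribution. This matching is exactly what the normalization by $\Ga\!\left(\f{z+1}{2}\right)$ secures — for instance at $z=0$ one finds $u_0\equiv1$ and $u_{-1}=\delta$, consistent with $\hat 1=\delta$ in the Fourier normalization used here — and it follows automatically from the fact that $z\mapsto\dpr{u_z}{f}$ is entire, as recorded in the first step; beyond this, only routine convergence estimates remain.
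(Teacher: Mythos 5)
Your proposal is correct. The paper does not prove this lemma itself; it simply cites Theorem 2.4.6 of Grafakos, and your first argument (the heat-kernel/Gamma subordination identity on the strip $-1<\Re z<0$, followed by analytic continuation in $z$ using that $z\mapsto\dpr{u_z}{f}$ is entire thanks to the $1/\Ga\left(\f{z+1}{2}\right)$ normalization) is essentially the standard proof given in that reference. Your second variant via homogeneity plus testing against the self-dual Gaussian is also sound and arguably cleaner, and your check of the degenerate values $z=0$, $z=-1$ confirms the normalization is consistent with $\hat{1}=\delta$.
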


We shall be also making use of the following standard convolution inequality - for all $\be_1, \be_2>0$, 
\begin{equation}
	\label{250} 
	\int_{-\infty}^\infty \f{1}{(1+|x-y|)^{1+\be_1}} \f{1}{(1+|y|)^{1+\be_2}} dy\leq \f{C_{\be_1, \be_2}}{(1+|x|)^{1+\min(\be_1,\be_2)}}
\end{equation}

\subsection{Littlewood-Paley partitions and various function spaces}
Introduce an even  $C^\infty_0$ function $\chi$, so that $0\leq \chi\leq 1$, 
$$
\chi(z)=\left\{
\begin{array}{cc}
	1 & |z|<1 \\
	0 & |z|>2.
\end{array}
\right.
$$
Also, $\psi(z):=\chi(z)-\chi(2z)$, so that $
\chi+\sum_{k=1}^\infty \psi(2^{-k}z)=1$  the corresponding Littlewood-Paley operators $P_k$ are defined via 
$$
\widehat{P_k f}(\xi)=\psi(2^{-k}\xi) \hat{f}(\xi).
$$
It is sometimes convenient to realize these norms by computing the norms of the Littlewood-Paley square functions 
$$
S_\ga f(x)=\left(\sum_k 2^{2k\ga} |P_k f(x)|^2\right)^{1/2}.
$$
The basic Littlewood-Paley theorem states 
$$
\|f\|_{L^p}\sim \|S_0 f\|_{L^2}. 
$$
and  as a consequence, 
$
\|D^\ga f\|_p\sim \|S_\ga f\|_p.
$
One can now define the (fractional) Sobolev spaces $W^{\ga,p}(\rone), \dot{W}^{\ga, p}(\rone), 1<p<\infty, s\geq 0$,
as the completion of the class of Schwartz functions in the norms 
$$
\|f\|_{W^{\ga, p}(\rone)}=\|f\|_p+\|D^\ga f\|_p, \ \ \ \|f\|_{\dot{W}^{\ga, p}(\rone)}= \|D^\ga f\|_p.
$$
We also make use of the standard Lipschitz spaces $C^\be(\rone), \be\in (0,1)$, defined via the norm 
$$
\|f\|_{C^\be}:=\|f\|_{L^\infty}+ \sup_{x\in\rone}\sup_{h\neq 0} \f{|f(x+h)-f(x)|}{|h|^\be}.
$$
For these spaces, there is the convenient LP characterization, (see for example Theorem 5.1.2, \cite{Graf}), 
$$
\|f\|_{C^\be}\sim \|P_{<0} f\|_{L^\infty}+\sup_{k\geq 1} 2^{k\beta} \|P_k f\|_{L^\infty},
$$
which easily implies the Sobolev embeddxing for each $0<\be<1$, $1<p<\infty$, 
\begin{equation}
	\label{sob:10} 
	\|f\|_{C^\be(\rone)}\leq C_{\be,p} \|f\|_{W^{\be+\f{1}{p},p}(\rone)}
\end{equation}
\subsection{Estimates on fractional derivatives on nonlinear functionals }
We make use of certain estimates, which involve fractional derivatives on producrs and more general functionals. Specifically, an extension of the classical Kato-Ponce (KP) estimates is as in the following proposition
\begin{proposition}[Theorem 1.2, \cite{Li}]
	\label{prop:Li} 
	For $s\in (0,1)$, $1<p<\infty$, $1< p_1, p_2, q_1, q_2\leq \infty: 
	\f{1}{p}=\f{1}{p_1}+\f{1}{p_2}=\f{1}{q_1}+\f{1}{q_2}$, we have 
	\begin{equation}
		\label{li:10} 
		\|D^s(f g)\|_{p}\leq C (\|g\|_{p_1}\|D^s f\|_{p_2}+\|f\|_{q_1} \|D^s g\|_{q_2}).
	\end{equation}
\end{proposition}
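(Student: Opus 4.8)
The plan is to establish \eqref{li:10} through a Littlewood--Paley paraproduct (Bony) decomposition, working throughout with the projections $P_k$, the square functions $S_\ga$, and the equivalence $\|D^\ga h\|_p\sim\|S_\ga h\|_p$ recalled above. Fix a fattened projection $\tilde P_k$ (Fourier support in $\{|\xi|\sim 2^k\}$, equal to the identity on $\mathrm{supp}\,\psi(2^{-k}\cdot)$), and with $P_{<k}=\sum_{j<k}P_j$ write
$$ fg=\sum_k (P_{<k-2}f)(P_k g)+\sum_k (P_k f)(P_{<k-2}g)+\sum_{|j-k|\le 2}(P_j f)(P_k g)=:\Pi_{lh}+\Pi_{hl}+\Pi_{hh}. $$
Since $\Pi_{hl}(f,g)=\Pi_{lh}(g,f)$ and the claimed bound is symmetric under simultaneously swapping $f\leftrightarrow g$ and the exponent pair $(q_1,q_2)\leftrightarrow(p_2,p_1)$, it suffices to bound $\Pi_{lh}$ by $\|f\|_{q_1}\|D^s g\|_{q_2}$ and $\Pi_{hh}$ by $\|g\|_{p_1}\|D^s f\|_{p_2}$. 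I first carry this out assuming all of $p_1,p_2,q_1,q_2$ are finite, and then address the endpoints.

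For $\Pi_{lh}$: each summand $(P_{<k-2}f)(P_k g)$ has Fourier support in $\{|\xi|\sim 2^k\}$, so the pieces are almost orthogonal and $\|D^s\Pi_{lh}\|_p\lesssim\|(\sum_k|D^s((P_{<k-2}f)(P_k g))|^2)^{1/2}\|_p$. On a function with Fourier support near $2^k$, $D^s$ acts as $2^{sk}$ times convolution with an $L^1$-dilated kernel, hence $|D^s((P_{<k-2}f)(P_k g))|\lesssim 2^{sk}M((P_{<k-2}f)(P_k g))$ uniformly in $k$ ($M$ the Hardy--Littlewood maximal operator). The Fefferman--Stein vector-valued inequality then gives $\|(\sum_k|D^s(\cdots)|^2)^{1/2}\|_p\lesssim\|(\sum_k 2^{2sk}|P_{<k-2}f|^2|P_k g|^2)^{1/2}\|_p$. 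Using the pointwise bound $\sup_k|P_{<k-2}f(x)|\lesssim Mf(x)$, pulling this factor out of the square, identifying $(\sum_k 2^{2sk}|P_k g|^2)^{1/2}=S_s g$, and applying H\"older with $\tfrac1p=\tfrac1{q_1}+\tfrac1{q_2}$ together with the $L^{q_1}$-boundedness of $M$ and $\|S_s g\|_{q_2}\sim\|D^s g\|_{q_2}$, we obtain $\|D^s\Pi_{lh}\|_p\lesssim\|f\|_{q_1}\|D^s g\|_{q_2}$, and symmetrically $\|D^s\Pi_{hl}\|_p\lesssim\|g\|_{p_1}\|D^s f\|_{p_2}$.

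For the diagonal term $\Pi_{hh}$: each summand $(P_j f)(P_k g)$ with $|j-k|\le 2$ has Fourier support in $\{|\xi|\lesssim 2^j\}$, so $P_n\Pi_{hh}$ receives contributions only from $j\gtrsim n$, and $\|D^s P_n((P_j f)(P_k g))\|_p\lesssim 2^{sn}\|(P_j f)(P_k g)\|_p$ (Mikhlin, uniformly in $n$). Summing the geometric series $\sum_{n\le j+O(1)}2^{sn}\lesssim 2^{sj}$ --- exactly where $s>0$ is used --- gives $\|D^s\Pi_{hh}\|_p\lesssim\sum_j 2^{sj}\|(P_j f)(\tilde P_j g)\|_p$ with $\tilde P_j g:=\sum_{|k-j|\le 2}P_k g$. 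By Cauchy--Schwarz in $j$ and then H\"older with $\tfrac1p=\tfrac1{p_2}+\tfrac1{p_1}$,
$$ \sum_j 2^{sj}\|(P_j f)(\tilde P_j g)\|_p\le\Big\|\Big(\sum_j 2^{2sj}|P_j f|^2\Big)^{1/2}\Big\|_{p_2}\Big\|\Big(\sum_j|\tilde P_j g|^2\Big)^{1/2}\Big\|_{p_1}\lesssim\|D^s f\|_{p_2}\|g\|_{p_1}, $$
using $\|S_s f\|_{p_2}\sim\|D^s f\|_{p_2}$ and $\|(\sum_j|\tilde P_j g|^2)^{1/2}\|_{p_1}\lesssim\|g\|_{p_1}$. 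Combined with the off-diagonal bounds, this proves \eqref{li:10} whenever all four exponents are finite.

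The main obstacle is the endpoint range where one or more of $p_1,p_2,q_1,q_2$ equals $\infty$: the equivalence $\|S_\ga h\|_\infty\sim\|D^\ga h\|_\infty$ and the Fefferman--Stein inequality both fail at $L^\infty$, so the argument above does not close, and this is precisely the delicate part carried out in \cite{Li}. The remedy is to avoid the square function in the $L^\infty$ slot: in $\Pi_{lh},\Pi_{hl}$ the ``high'' factor is a \emph{single} Littlewood--Paley block, so one replaces the almost-orthogonal square-function step by a direct estimate, using that $\tilde P_k D^{-s}$ is convolution with an $L^1$ kernel of norm $\lesssim 2^{-sk}$, and pairs an $L^\infty$ H\"older bound on that block with a geometric gain in $k$; in $\Pi_{hh}$ one invokes a substitute valid with $L^\infty$ replaced by a Besov/$BMO$ norm at the intermediate stage (equivalently, the Coifman--Meyer bilinear multiplier theorem, which is designed to accommodate the $L^\infty$ endpoint through $BMO$). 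Assembling the off-diagonal and diagonal estimates over the full admissible exponent range then yields Proposition~\ref{prop:Li}; the careful bookkeeping of which exponents may be infinite in which paraproduct is the only genuinely subtle point, and is the content of \cite{Li}.
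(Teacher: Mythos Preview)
The paper does not prove this proposition at all; it is quoted verbatim from \cite{Li} and used as a black box. Your paraproduct sketch is the standard route and is essentially how such estimates are proved, so in that sense there is nothing to ``compare'' beyond noting that you are supplying an argument the paper deliberately omits.

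That said, your treatment of the diagonal piece $\Pi_{hh}$ contains a genuine gap. After the triangle inequality in $n$ you arrive at $\|D^s\Pi_{hh}\|_p\lesssim\sum_j 2^{sj}\|(P_j f)(\tilde P_j g)\|_p$ and then claim
\[
\sum_j 2^{sj}\|(P_j f)(\tilde P_j g)\|_p\le\Big\|\Big(\sum_j 2^{2sj}|P_j f|^2\Big)^{1/2}\Big\|_{p_2}\Big\|\Big(\sum_j|\tilde P_j g|^2\Big)^{1/2}\Big\|_{p_1}
\]
``by Cauchy--Schwarz in $j$ and then H\"older''. This step is not valid: the left side is an $\ell^1_j L^p_x$ norm, and pointwise Cauchy--Schwarz followed by H\"older only controls the smaller $L^p_x\ell^1_j$ norm. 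Doing H\"older first and then Cauchy--Schwarz in $j$ produces instead the Besov quantities $(\sum_j 2^{2sj}\|P_j f\|_{p_2}^2)^{1/2}$ and $(\sum_j\|\tilde P_j g\|_{p_1}^2)^{1/2}$, and the passage from these to the Triebel--Lizorkin square-function norms requires $p_1,p_2\ge 2$. The remedy is to avoid the $\ell^1$ triangle inequality in $n$ altogether: keep the square-function form $\|D^s\Pi_{hh}\|_p\sim\|(\sum_n 2^{2sn}|P_n\Pi_{hh}|^2)^{1/2}\|_p$, bound $|P_n\Pi_{hh}|\lesssim M(\sum_{j\ge n-C}|P_j f||\tilde P_j g|)$, apply Fefferman--Stein, and use the elementary pointwise bound $(\sum_n 2^{2sn}(\sum_{j\ge n}a_j)^2)^{1/2}\lesssim(\sum_j 2^{2sj}a_j^2)^{1/2}$ (this is where $s>0$ enters). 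One then finishes with $|\tilde P_j g|\lesssim Mg$ and H\"older, exactly as in your $\Pi_{lh}$ argument. Your endpoint discussion is honest about deferring the delicate $L^\infty$ cases to \cite{Li}, which is all the paper itself does.
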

{\bf Remark:} Note that in \eqref{li:10}, one does allow for $q_2=\infty$ and/or $p_2=\infty$. The original  KP paper did not cover this case. 
\begin{proposition}(\cite{CW}, Proposition 3.1)
	\label{le:po} 
	
	Let $\ga \in (0,1)$, $F$ is $C^1(\rone)$ function and $1<q,r<\infty, 1<p\leq \infty:  \f{1}{r}=\f{1}{p}+\f{1}{q}$. 
	 Then, assuming that $u\in L^\infty(\rone)$, 
	\begin{equation}
		\label{CW:10} 
			\|D^\ga F(u)\|_r\leq C \|F'(u)\|_p \|D^\ga u\|_q
	\end{equation}
\end{proposition}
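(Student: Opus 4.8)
The plan is to prove \eqref{CW:10} from the square-function characterization $\|D^\ga g\|_r\sim\|S_\ga g\|_r$, $1<r<\infty$, of Section \ref{sec:2}, by expanding the composition $F(u)$ so that $F'(u)$ appears as an honest multiplicative factor in the leading term, the remainder being controlled by exploiting that $u\in L^\infty(\rone)$ confines the range of $u$ to a compact interval on which $F'$ is uniformly continuous.

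Since $\check{\psi}_k$ has vanishing integral, one has $P_k(F(u))(x)=\int_{\rone}\check{\psi}_k(x-y)\,(F(u(y))-F(u(x)))\,dy$ for every $k$. Writing, by the fundamental theorem of calculus, $F(u(y))-F(u(x))=\tfrac12(F'(u(x))+F'(u(y)))(u(y)-u(x))+R(x,y)$ with $|R(x,y)|\le|u(x)-u(y)|\,\om(|u(x)-u(y)|)$ and $\om$ the modulus of continuity of $F'$ on $[-\|u\|_\infty,\|u\|_\infty]$, one splits $P_k(F(u))$ into three pieces. The $F'(u(x))$-piece equals $\tfrac12 F'(u(x))P_ku(x)$, so that taking the $\ell^2_k$-sum with weights $2^{2k\ga}$ yields $\tfrac12|F'(u(x))|\,S_\ga u(x)$, and H\"older's inequality with $\f1r=\f1p+\f1q$ together with $\|S_\ga u\|_q\sim\|D^\ga u\|_q$ gives a contribution $\lesssim\|F'(u)\|_p\,\|D^\ga u\|_q$, which is already of the desired form. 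The $F'(u(y))$-piece is a commutator, $\int\check\psi_k(x-y)F'(u(y))(u(y)-u(x))\,dy=P_k((F'\circ u)u)(x)-u(x)P_k(F'\circ u)(x)$; bounding it by $\int|\check\psi_k(x-y)|\,|F'(u(y))|\,|u(x)-u(y)|\,dy$ and applying H\"older in $y$ against the probability-like measure $\propto|\check\psi_k|\,dy$ produces $(M|F'(u)|^\theta)(x)^{1/\theta}$ (with $\theta\in(1,p)$, so $M$ is bounded on $L^{p/\theta}$) times a weighted $L^{\theta'}$-average of $|u(x)-u(y)|$, whose $\ell^2_k$-sum with weights $2^{2k\ga}$ is dominated by a Strichartz-type square function of $u$ with $L^q$-norm $\lesssim\|D^\ga u\|_q$; H\"older closes this piece with the same bound.

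The hard part will be the remainder $\sum_k 2^{k\ga}\int\check\psi_k(x-y)R(x,y)\,dy$. When $p=\infty$ this is immediate, since $|R(x,y)|\le\om(2\|u\|_\infty)|u(x)-u(y)|\le 2\|F'(u)\|_{L^\infty}|u(x)-u(y)|$ and $r=q$; but for $p<\infty$ the crude bound $\om(2\|u\|_\infty)$ is a sup-type constant in $F$, not $\|F'(u)\|_p$, so one must instead extract a genuinely subcritical gain from the vanishing of $\om$ at $0$ --- using that $|u(x)-u(y)|$ is small on the scale $2^{-k}$ dominating the $y$-integral --- and pair it, again via H\"older/Cauchy--Schwarz, against a maximal function of a power of $|F'(u)|$ and the square function of $u$, so that precisely $\|F'(u)\|_p$ survives on the right. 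Arranging this delicate balance (together with the endpoint cases $p$ near $1$, where the exponent $\theta'$ above is large) is the technical heart of the argument of \cite{CW}. The remaining ingredients --- the square-function characterization of $\dot W^{\ga,r}$, almost orthogonality and kernel estimates for the $P_k$, the Fefferman--Stein vector-valued maximal inequality, H\"older, and the reduction of $D^\ga$ acting on a piece with frequency support near $2^k$ to $2^{k\ga}$ times it modulo maximal operators --- are standard, and a truncation in $k$ (proving the estimate for $\sum_{|k|\le N}$ uniformly in $N$) disposes of any a priori finiteness issue should an absorption be needed.
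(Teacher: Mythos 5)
First, a point of comparison: the paper does not prove this proposition at all --- it is quoted from \cite{CW} (Proposition 3.1), and the only original content is the remark that the endpoint $p=\infty$ follows from the pointwise inequality $S_\ga(F(u))(x)\lesssim M(F'(u))(x)\,\tilde S_\ga u(x)$ already established in \cite{CW}, after which \eqref{CW:10} is a single application of H\"older together with $\|\tilde S_\ga u\|_q\lesssim \|D^\ga u\|_q$. Your proposal instead attempts a from-scratch proof. The main term is handled correctly ($\f12 F'(u(x))P_ku(x)$, square-function sum, H\"older), and the commutator piece is a plausible sketch modulo the $L^q$-boundedness of the $\theta'$-averaged Strichartz square function, but the argument does not close.

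The genuine gap is the remainder term, and it is not a deferrable technicality: with your trapezoid decomposition the only available bound is $|R(x,y)|\le |u(x)-u(y)|\,\om(|u(x)-u(y)|)$, where $\om$ is the modulus of continuity of $F'$ on $[-\|u\|_\infty,\|u\|_\infty]$. This is a sup-type quantity in $F'$ over the whole interval, and no amount of rearranging H\"older and maximal functions can dominate it by the $L^p$-norm $\|F'(u)\|_p$ for finite $p$; indeed even your inequality $\om(2\|u\|_\infty)\le 2\|F'(u)\|_{L^\infty}$ is false in general, since $u\in L^\infty(\rone)$ may have essential range a small subset of $[-\|u\|_\infty,\|u\|_\infty]$, on whose complement the merely continuous $F'$ can oscillate arbitrarily while leaving $\|F'(u)\|_{L^\infty}$ unchanged. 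The moment $F'$ gets evaluated at intermediate points $\lambda u(x)+(1-\lambda)u(y)$ that are not of the form $u(z)$, no norm of $F'\circ u$ can recover it. The argument of \cite{CW} avoids producing such a remainder altogether: it keeps $F'$ evaluated along $u$ and derives the pointwise domination $S_\ga(F(u))\lesssim M(F'(u))\,\tilde S_\ga u$ directly. If you want a self-contained proof, that pointwise inequality --- not the three-term split --- is what you need to establish.
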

{\bf Remark:} In the original paper, \cite{CW}, it is in fact required that  
$1<p, q,r<\infty:\f{1}{r}=\f{1}{p}+\f{1}{q}$. It is however easy to extend the proof to the case $r=q \in (1, \infty),  p=\infty$, since the authors in \cite{CW} show a stronger point-wise estimate 
 $$
 S_\ga (F(u))\leq M(F’(u)) \tilde{S}_\ga (D^s u), 
 $$
where $S, \tilde{S}$ are variants of the LP square function above, and $M$ is the Hardy-Littlewood maximal operator. So, the result is even easier for this case of interest, as there is no need of H\"older's inequality.


\section{Variational construction and consequences: Proof of Theorem \ref{theo:10}}
\label{sec:3} 
 In order to set up a possible variational approach to the problem \eqref{30}, let us first address some of the  main difficulties. Specifically, the heteroclinic boundary  values do not allow for a direct construction of such functions, which belong to a good function space. So, a good roundabout way of achieving this goal is to write $\phi=W+v$, where $W$ is a {\it fixed} tanh function \eqref{25}, which possesses the correct asymptotics at $\pm\infty$ and it is a solution for exactly $\alpha=2$.  With this in mind, one may want to consider a variational problem of the type 
\begin{equation}
	\label{48} 
 I[u]:=\f{1}{2} \|D^{\al/2} u\|^2 + \f{1}{4} \int_{\rone} (1-u^2)^2 dx\to \min.
\end{equation} 
and subsequently, $u=W+v$. Taking a minimizing sequence in $v$ and attempting to take a limit   creates the usual lack of compactness issues. This may be resolvable through standard methods, such as compensated compactness, but we found it easier to approach it in a slightly different fashion. 
\subsection{Approximate variational problem}
Let $\epsilon>0$. Consider the (unconstrained variational problem)
\begin{equation}
	\label{50} 
	 I_\epsilon[u]:= \f{1}{2} \|D^{\al/2} u\|^2+ \epsilon \int_\rone (u-W)^2 \ln(e+x^2) dx + \f{1}{4} \int_{\rone} (u^2-1)^2 dx\to \min.
\end{equation}
subject to $\lim_{x\to \pm \infty} u(x)=\pm 1$. Recall that $W$ is selected to be an odd function. Our first result is that the minimizers, if any,  must be odd functions. 
\begin{lemma}
	\label{le:pol} 
	The problem \eqref{50} can be considered over the odd subspace only. In other words, if minimizers of \eqref{50} exist, they must be odd functions.
\end{lemma}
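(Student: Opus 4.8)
The plan is to exploit the symmetry of the functional $I_\epsilon$ under the reflection $u(x)\mapsto -u(-x)$, together with strict convexity in the ``even part'' direction. Since $W$ is odd, all three terms in \eqref{50} are invariant under the involution $(Ru)(x):=-u(-x)$: the homogeneous Sobolev seminorm $\|D^{\al/2}u\|^2$ is reflection-invariant and unaffected by the sign change (as $D^{\al/2}$ commutes with $u\mapsto -u$ and with $u\mapsto u(-\cdot)$), the weight $\ln(e+x^2)$ is even so $\int (Ru-W)^2\ln(e+x^2)dx = \int(-u(-x)+W(-x))^2\ln(e+x^2)dx = \int(u-W)^2\ln(e+x^2)dx$ after the change of variables $x\mapsto -x$ and using $W$ odd, and $\int(u^2-1)^2dx$ is manifestly invariant since $(Ru)^2 = u(-\cdot)^2$. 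Note also that $Ru$ still satisfies the boundary conditions $\lim_{x\to\pm\infty}(Ru)(x) = \pm 1$, so $R$ maps the admissible class to itself.

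Next I would write any admissible $u$ as $u = u_{odd} + u_{ev}$ where $u_{odd} = \tfrac{1}{2}(u - u(-\cdot))$ is the odd part and $u_{ev} = \tfrac{1}{2}(u + u(-\cdot))$ is the even part; note $u_{odd}$ is admissible (it has the right limits) while $u_{ev}$ has limits $0$ at $\pm\infty$, so $u_{ev}\in \dot H^{\al/2}$ is a genuine perturbation and, crucially, $\int u_{ev}^2 \ln(e+x^2)\,dx$ makes sense. The key inequality I would aim to prove is $I_\epsilon[u_{odd}] < I_\epsilon[u]$ whenever $u_{ev}\not\equiv 0$, which immediately forces any minimizer to be odd. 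For the Sobolev term, since $D^{\al/2}u_{odd}$ is even and $D^{\al/2}u_{ev}$ is odd, they are orthogonal in $L^2$, giving $\|D^{\al/2}u\|^2 = \|D^{\al/2}u_{odd}\|^2 + \|D^{\al/2}u_{ev}\|^2$. For the weighted term, expanding $(u-W)^2 = (u_{odd}-W)^2 + 2(u_{odd}-W)u_{ev} + u_{ev}^2$ and integrating against the even weight $\ln(e+x^2)$, the cross term integrates to zero because $(u_{odd}-W)$ is odd and $u_{ev}$ is even so their product is odd; hence $\int(u-W)^2\ln(e+x^2)dx = \int(u_{odd}-W)^2\ln(e+x^2)dx + \int u_{ev}^2\ln(e+x^2)dx$. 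The only term requiring care is the quartic $\int(u^2-1)^2dx$; here I would use convexity of $t\mapsto (t-1)^2$ combined with $u(x)^2$ versus $u(-x)^2$: writing $a = u(x)^2, b = u(-x)^2$ one has $\tfrac12[(a-1)^2 + (b-1)^2]\geq \left(\tfrac{a+b}{2}-1\right)^2$ by convexity, and $\tfrac{a+b}{2} = \tfrac{u(x)^2+u(-x)^2}{2} \geq \left(\tfrac{u(x)-u(-x)}{2}\right)^2 = u_{odd}(x)^2$ is not quite what is needed — rather one should compare directly $(u(x)^2-1)^2 + (u(-x)^2-1)^2 \geq (u_{odd}(x)^2-1)^2 + (u_{odd}(-x)^2-1)^2$ after integrating. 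A cleaner route: integrate $\int(u^2-1)^2$, symmetrize by $x\mapsto -x$ and average, so $\int(u^2-1)^2 dx = \tfrac12\int\big[(u(x)^2-1)^2 + (u(-x)^2-1)^2\big]dx$, then note that with $p = u(x), q = u(-x)$, $u_{odd}(x) = \tfrac{p-q}{2}$, and one checks $(p^2-1)^2 + (q^2-1)^2 \geq \big((\tfrac{p-q}{2})^2-1\big)^2 + \big((\tfrac{q-p}{2})^2-1\big)^2 = 2\big(u_{odd}(x)^2 - 1\big)^2$ pointwise — this algebraic inequality is the heart of the matter.

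I expect that pointwise algebraic inequality to be the main obstacle, and I would handle it as follows: it is symmetric in $p\leftrightarrow q$ and even in each, so set $s = \tfrac{p-q}{2}$, $m = \tfrac{p+q}{2}$, so $p = m+s$, $q = m-s$; then $p^2 = m^2+2ms+s^2$, $q^2 = m^2-2ms+s^2$, and one must show $(m^2+2ms+s^2-1)^2 + (m^2-2ms+s^2-1)^2 \geq 2(s^2-1)^2$. Writing $A = m^2+s^2-1$ and $B = 2ms$, the left side is $(A+B)^2 + (A-B)^2 = 2A^2 + 2B^2 = 2(m^2+s^2-1)^2 + 8m^2s^2 \geq 2(m^2+s^2-1)^2$. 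So it remains to show $(m^2+s^2-1)^2 \geq (s^2-1)^2$, i.e. $(m^2+s^2-1)^2 - (s^2-1)^2 = m^2(m^2+2s^2-2)\geq 0$; this is \emph{not} true for all $m,s$ (it fails when $m^2+2s^2<2$). Therefore the naive pointwise inequality is false, and I would instead retreat to the weighted-term argument as the compactness/rigidity source, or more robustly: observe that the only issue is where $u$ is near $0$ in a region of measure controlled by the weighted term. Given the difficulty, the honest plan is: use the orthogonal splitting to get $I_\epsilon[u] = I_\epsilon[u_{odd}] + \tfrac12\|D^{\al/2}u_{ev}\|^2 + \epsilon\int u_{ev}^2\ln(e+x^2)dx + \tfrac14\big(\int(u^2-1)^2 - \int(u_{odd}^2-1)^2\big)$; the first three extra terms are $\geq 0$ and the last difference, while possibly negative pointwise, is controlled because the potential gain is confined to a set where $|u_{ev}|$ is bounded and whose weighted measure is small relative to $\epsilon\int u_{ev}^2\ln(e+x^2)dx$ — but realistically the paper likely proves this by showing directly that $u\mapsto -u(-\cdot)$ is a symmetry and invoking that, \emph{if} the minimizer is unique up to this symmetry it must be a fixed point, which it need not be. Given the stated lemma claims minimizers ``must be'' odd, I anticipate the author's actual argument uses strict convexity in a suitable decomposition together with the strictly positive weight forcing the even part to vanish; I would follow that line, making the quartic term's contribution nonnegative by the averaging $\int(u^2-1)^2 = \tfrac12\int[(u(x)^2-1)^2+(u(-x)^2-1)^2]$ and then using convexity of $t\mapsto(t-1)^2$ against the \emph{even} combination $\tfrac{u(x)^2+u(-x)^2}{2}$, concluding with the fact that the resulting "even-symmetrized" competitor equals $u_{odd}$ only on the diagonal, so strict inequality holds unless $u_{ev}\equiv 0$. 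This convexity-plus-strict-positivity-of-weight mechanism is where the real work lies, and I would present it carefully rather than relying on the false pointwise bound above.
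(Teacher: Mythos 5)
Your decomposition $u = u_{odd} + u_{ev}$, the $L^2$-orthogonality of the odd and even parts in the Sobolev term, and the vanishing of the cross term in the weighted integral all match the paper's argument exactly. Your suspicion about the quartic term is not only warranted; it exposes a real gap in the paper's own proof. The paper's displayed expansion contains the summand $\tfrac{1}{2}u_{ev}^{2}\,(u_{odd}^{2}-1)^{2}$, which would be manifestly nonnegative, but the actual parity calculation of $\tfrac14\int(u^2-1)^2\,dx$ produces $\tfrac{1}{2}u_{ev}^{2}\,(u_{odd}^{2}-1)$ with \emph{no} square on $(u_{odd}^{2}-1)$. Collecting all three surplus quartic terms gives
\begin{equation*}
\tfrac14\int(u^2-1)^2\,dx-\tfrac14\int(u_{odd}^2-1)^2\,dx
=\int u_{ev}^{2}\Bigl[\tfrac{u_{ev}^{2}}{4}+\tfrac{3u_{odd}^{2}-1}{2}\Bigr]dx,
\end{equation*}
and the bracketed factor is negative wherever $u_{ev}^{2}+6u_{odd}^{2}<2$, a set that is generically nonempty since $u_{odd}(0)=0$. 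Your own pointwise computation reaches the same conclusion, although you dropped the $8m^2s^2$ contribution: keeping it, $(p^2-1)^2+(q^2-1)^2-2(s^2-1)^2=2m^2(m^2+6s^2-2)$, which is still negative when $m^2+6s^2<2$, so your verdict stands even after the algebra is repaired.

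The upshot is that the asserted inequality $I_\epsilon[u]\ge I_\epsilon[u_{odd}]$ does not follow from the expansion. Neither you nor the paper provides an argument that $\tfrac12\|D^{\alpha/2}u_{ev}\|^2+\epsilon\int u_{ev}^2\ln(e+x^2)\,dx$ dominates the possible negative quartic contribution: the $\dot H^{\alpha/2}$ seminorm does not control $\|u_{ev}\|_{L^2}$, and the weight term only helps with a prefactor $\epsilon$ that is eventually sent to zero. You were right to identify this as the crux, right to reject the naive pointwise bound, and honest in conceding that you could not close it; the paper's proof, as written, shares exactly that gap behind what appears to be a misplaced square.
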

{\bf Remark:} The argument presented herein applies to all variational problems henceforth and so, we can assume that the minimizers, if they exist,  are all odd. 
\begin{proof}
	Decompose 
	$$
	u(x)=\f{u(x)+u(-x)}{2}+\f{u(x)-u(-x)}{2}=u_{even}(x)+u_{odd}(x)
	$$
	Note that $\lim_{x\to \pm \infty} u_{odd}(x)=\pm 1$, while $\lim_{x\to \pm \infty} u_{even}(x)=0$.
	Plugging this ansatz  in the functional $I_\eps$ and exploiting the parity, we obtain
	\begin{eqnarray*}
		I_\epsilon[u] &=&I_\epsilon[u_{odd}]+\f{1}{2} \|D^\f{\al}{2} u_{even}\|^2+ \epsilon \int_{\rone} u_{even}^2 \ln(e+x^2) dx + \\
		&+& \int_\rone \left[\f{u_{even}^4}{4}+\f{u_{even}^2(u_{odd}^2-1)^2}{2}+ u_{odd}^2 u_{even}^2\right]dx\geq I_\epsilon[u_{odd}].
	\end{eqnarray*}

It follows that for this  problem, constrained by $\lim_{x\to \pm \infty} u(x)=\pm 1$, $u_{odd}$  is always a strictly better option than $u$. 
\end{proof}

Back to the problem \eqref{50} - in terms of the function $v$, which does vanish at $\pm\infty$, we have the more manageable form 
\begin{eqnarray}
	\label{60} 
	J_\epsilon[v] &=& \f{1}{2} \|D^{\al/2} (v+W)\|^2+ \epsilon \int_\rone v^2 \ln(e+x^2) dx + \f{1}{4} \int_\rone ((v+W)^2-1) dx \\
	\nonumber
	&=& 
	\f{1}{2} \left(\|D^{\al/2} v\|^2+ 2 \dpr{D^\al W}{v}+ \|D^{\al/2} W\|^2\right)+  \epsilon \int_\rone v^2 \ln(e+x^2) dx + \\
	\nonumber
	&+& \f{1}{4} \int_{\rone} (W^2-1+(2 W v+v^2))^2 dx\to \min.
\end{eqnarray}
Of course, it is clear that $J_\epsilon[v]\geq 0$, while on the other hand, 
\begin{equation}
	\label{57} 
	J_\epsilon[0]= \f{1}{2} \|D^{\al/2} W\|^2+\f{1}{4} \int_{\rone} (1-W^2)^2 dx=:M_0<\infty.
\end{equation}
Introducing $\cj_\epsilon:=\inf_{v} J_\epsilon[v]>0$, we clearly have the ($\eps$ independent!) bound 
\begin{equation}
	\label{a:5} 
	\cj_\epsilon\leq J_\eps[0]=I_\eps[W]=:M_0
\end{equation}
We have the following existence result. 
\begin{proposition}
	\label{prop:10} 
	Let $\epsilon>0$. Then, the variational problem \eqref{50} has a solution $u_\epsilon$. More specifically, the equivalent variational problem \eqref{60} has a solution $v_\epsilon\in H^\alpha(\rone)$. In addition, $v$ satisfies the Euler-Lagrange equation 
	\begin{equation}
		\label{70} 
		D^\alpha (v_\epsilon+ W)+2 \epsilon v_\epsilon \ln(e+x^2)+(W+v_\epsilon) ((W+v_\epsilon)^2-1)=0.
	\end{equation}
Also, the  linearized operators $\cl_\eps$, with domain $D(\cl)=H^{\al}(\rone)$,   are non-negative for all $\eps>0$. More precisely, in operator sense, 
\begin{equation}
	\label{l110}
	\cl_\eps:= D^\al+2-3(1-u_\eps^2) + 2\eps\ln(e+x^2) \geq 0.
\end{equation}
\end{proposition}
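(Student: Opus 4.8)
The plan is to establish existence via the direct method in the calculus of variations, then derive the Euler-Lagrange equation and finally prove non-negativity of the linearized operator. First I would work with the functional $J_\eps[v]$ in \eqref{60} over the space $H^{\al/2}(\rone)$. The key point is that the added term $\eps\int_\rone v^2 \ln(e+x^2)\,dx$ provides exactly the weak lower semicontinuity and compactness that the original functional \eqref{48} lacks: I would take a minimizing sequence $v_n$ with $J_\eps[v_n]\to \cj_\eps$, observe from the structure of $J_\eps$ together with the a priori bound \eqref{a:5} that $\|D^{\al/2}v_n\|^2$, $\int v_n^2\ln(e+x^2)\,dx$, and $\int(W^2-1+2Wv_n+v_n^2)^2\,dx$ are all bounded. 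From the last bound plus the first, together with $W$ bounded and the exponential tail of $1-W^2$, one extracts an $L^2$-bound on $v_n$ (e.g.\ splitting into the region where $|v_n|$ is large, controlled by the quartic term, and where it is small). Hence $v_n$ is bounded in $H^{\al/2}(\rone)$ and, after passing to a subsequence, $v_n\rightharpoonup v_\eps$ weakly in $H^{\al/2}$ and a.e.; weak lower semicontinuity of $\|D^{\al/2}\cdot\|^2$ and of the weighted $L^2$-norm, plus Fatou for the quartic term, give $J_\eps[v_\eps]\le \cj_\eps$, so $v_\eps$ is a minimizer. Then $u_\eps:=W+v_\eps$ solves \eqref{50}, and $u_\eps$ is odd by Lemma \ref{le:pol}.

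Next I would derive the Euler-Lagrange equation \eqref{70} by computing $\frac{d}{dt}J_\eps[v_\eps+t\varphi]\big|_{t=0}=0$ for $\varphi\in C_0^\infty(\rone)$; the only mild care needed is justifying differentiation under the integral for the weighted and quartic terms, which follows from the already-established integrability of $v_\eps^2\ln(e+x^2)$ and $(u_\eps^2-1)^2$ together with $u_\eps\in L^\infty$ — the latter from bootstrapping \eqref{70}: $D^\al u_\eps$ equals an $L^2$ function (the logarithmic weight times $v_\eps$ is in $L^2$, and the cubic term in $u_\eps=W+v_\eps$ lies in $L^2$ because $W^2-1$ decays and $v_\eps\in H^{\al/2}\hookrightarrow L^p$), so $u_\eps\in H^\al\hookrightarrow L^\infty$ for $\al>1$, and then iterating gives $u_\eps\in C^\infty$.

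For the non-negativity \eqref{l110}, the standard variational argument applies: since $v_\eps$ is a minimizer of $J_\eps$, the second variation is non-negative, i.e.\ $\dpr{J_\eps''[v_\eps]\psi}{\psi}\ge 0$ for all $\psi$ in the form domain. Computing the second variation of \eqref{60} term by term gives precisely
\begin{equation}
\label{secvar}
\dpr{J_\eps''[v_\eps]\psi}{\psi}=\|D^{\al/2}\psi\|^2+2\eps\int_\rone \psi^2\ln(e+x^2)\,dx+\int_\rone\big(3u_\eps^2-1\big)\psi^2\,dx=\dpr{\cl_\eps\psi}{\psi},
\end{equation}
using $3u_\eps^2-1=2-3(1-u_\eps^2)$. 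Since $C_0^\infty$ is a form core for $\cl_\eps$ (whose domain is $H^\al$ because the potential $3u_\eps^2-1-2\eps\ln(e+x^2)$ is, after the unbounded but $H^\al$-form-bounded logarithmic piece, a bounded perturbation of $D^\al$; more precisely $\ln(e+x^2)$ is infinitesimally form-bounded with respect to $D^\al$ for $\al>0$), the inequality extends to all of the form domain, giving $\cl_\eps\ge 0$.

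I expect the main obstacle to be the compactness step — specifically, upgrading the bounds on $\|D^{\al/2}v_n\|$ and the quartic term to a genuine $H^{\al/2}$-bound on $v_n$ and then passing to the limit in the quartic term without losing mass at infinity. This is exactly where the weight $\ln(e+x^2)$ earns its keep: it forces any minimizing sequence to concentrate near the origin in the weighted $L^2$ sense, ruling out the translation-invariance-driven loss of compactness that plagues \eqref{48} directly; making this rigorous (e.g.\ via a careful splitting of $\rone$ into $|x|\le R$ and $|x|>R$ with the tail controlled by the weight) is the technical heart of the argument.
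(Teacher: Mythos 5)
Your proposal is correct, and the existence step takes a genuinely different (though equally standard) route from the paper's. Where you propose weak $H^{\alpha/2}$ convergence together with a.e.\ convergence (after Rellich on bounded sets) and then Fatou for the quartic term, the paper instead leverages the weight to obtain strong $L^2$ pre-compactness directly: from $\eps\int v_n^2\ln(e+x^2)\,dx\le C$, the uniform tail estimate $\int_{|x|>M}v_n^2\,dx\lesssim 1/\ln M$ gives $L^2$ pre-compactness by the Kolmogorov--Riesz criterion, after which the uniform $H^{\alpha/2}$ bound and Gagliardo--Nirenberg upgrade this to strong convergence in every $L^p$, $2\le p\le\infty$ (including $L^\infty$, since $\alpha>1$). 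This means all the non-quadratic terms simply converge, rather than only obeying a one-sided Fatou inequality, which makes the passage to the limit slightly cleaner; your route buys nothing more but is the more common textbook formulation. Two small observations: (i) you needn't extract the $L^2$ bound on $v_n$ by splitting on where $|v_n|$ is large and using the quartic term — it is immediate from $\ln(e+x^2)\ge 1$, which yields $\|v_n\|_{L^2}^2\le M_0/\eps$; (ii) when invoking Fatou you should say explicitly that the a.e.\ convergence comes from the compact embedding $H^{\alpha/2}\hookrightarrow C^{(\alpha-1)/2}_{\mathrm{loc}}$ after passing to a subsequence, since weak $H^{\alpha/2}$ convergence alone does not give it. Your Euler--Lagrange and second-variation arguments for \eqref{70} and \eqref{l110} match the paper's, and your remark that $\ln(e+x^2)$ is infinitesimally form-bounded relative to $D^\alpha$ (so the form domain is $H^{\alpha/2}$ and the operator domain is $H^\alpha$) is a correct justification that the paper leaves implicit.
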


\begin{proof}
	One can find a minimizing sequence of odd functions, $v_n: v_n\in H^{\al/2}$, so that $\lim_n J_\epsilon[v_n]=\cj_\epsilon$. Let us show now the necessary bounds on $\sup_n \|v_n\|_{H^{\al/2}}$. Indeed, let $N$ so that 
	$J_{\epsilon}[v_n]\leq 2 \cj_\epsilon$. Then, for all $n>N$, 
	\begin{equation}
		\label{64} 
			\f{1}{2} \left(\|D^{\al/2} v_n\|^2+ 2 \dpr{D^\al W}{v_n}+ \|D^{\al/2} W\|^2\right)+  \epsilon \int_\rone v_n^2dx \leq  J_{\epsilon}[v_n]\leq 2 \cj_\epsilon\leq 2 M_0.
	\end{equation}
	since $\ln(e+x^2)\geq 1$. 
	Furthermore, as 
	$
	 2 \dpr{D^\al W}{v_n}>- \f{1}{2}  \|D^{\al/2} v_n\|^2- 2 \|D^{\al/2} W\|^2,
	$
	we conclude that 
	\begin{equation}
		\label{61} 
			\f{1}{4}  \|D^{\al/2} v_n\|^2 + \epsilon \int_\rone v_n^2dx\leq 2 \cj_\epsilon+\|D^{\al/2} W\|^2\leq 4 M_0,
	\end{equation}
from \eqref{57}. 
	This yields a bound on $\sup_n \|v_n\|_{H^{\al/2}}\leq C_\epsilon$, as required. Furthermore, we have pre-compactness for the sequence $v_n$, at least in strong $L^2(\rone)$ sense, due to the bound 
	\begin{equation}
		\label{62} 
		\epsilon  \int v_n^2 \ln(e+x^2) dx \leq I_\epsilon[u_n].
	\end{equation}
	Indeed, this last bound implies that for all $M>0$ and for all  $n>N$, 
	$$
   \int_{|x|>M}  v_n^2(x)  dx \leq C M_0 \epsilon^{-1} \ln(M)^{-1}. 
	$$
	So, $\{v_n\}_n$ is pre-compact by the Riesz criteria, and hence we can take an $L^2$ strongly convergent subsequence, which without loss of generality, we assume  is the same sequence,  $\{v_n\}$.
	So, there exists  $v_\epsilon\in H^{\alpha/2}\cap L^2(\ln(e+|\cdot|^2))$, so that $\lim_n \|v_n-v_\epsilon\|=0$. We can, by taking a further subsequence, also assume that $D^{\alpha/2} v_n$ converges $L^2$ weakly to $D^{\alpha/2} v_\epsilon$,  as well as $v_n$ converges $L^2(\ln(e+|\cdot|^2))$  weakly to $v_\epsilon$. 
	
	Due to the Gagliardo-Nirenberg-Sobolev's inequality (recall $\alpha>1$) and the uniform bound $\sup_n \|v_n\|_{H^{\al/2}}\leq C_\epsilon$, we can extend this claim to all $2\leq p\leq \infty$ spaces, namely $\lim_n \|v_n-v_\epsilon\|_p=0$. So, 
	\begin{eqnarray*}
	& & 	\lim_n \left( \dpr{D^\al W}{v_n}+  \f{1}{4} \int_{\rone} (W^2-1+(2 W v_n+v_n^2))^2 dx\right) \\
		& & = \dpr{D^\al W}{v_\epsilon}+  \f{1}{4} \int_{\rone} (W^2-1+(2 W v_\epsilon+v_\epsilon^2))^2 dx
	\end{eqnarray*}
By the lower semi-continuity of the norms, with respect to weak norms, 
$$
\liminf_n \left(\f{1}{2} \|D^{\alpha/2} v_n\|^2+ \epsilon \int v_n^2 \ln(e+x^2) dx\right)\geq 
\f{1}{2} \|D^{\alpha/2} v_\epsilon\|^2+ \epsilon \int v_\epsilon^2 \ln(e+x^2) dx
$$
All in all, we obtain 
$$
\cj_\epsilon=\liminf_n J_\epsilon[v_n]\geq J_\epsilon[v_\epsilon],
$$
which of course implies that $v_\epsilon$ is a minimizer for the variational problem \eqref{60}, and subsequently $u_\epsilon:=W+v_\epsilon$ is a minimizer for \eqref{50}.

As $u_\epsilon$ is an (unconditional) minimizer  for $I(u_\epsilon)$, 
then clearly the function $g(z):=I_\epsilon(u_\epsilon+z h)$ has a minimum at zero for every fixed  test function $h\in C_0^\infty(\rone)$. It is easy to see that 
$$
I_\epsilon(u_\epsilon+z h)=I_\epsilon(u_\epsilon)+z \dpr{D^\al u_\eps+2\epsilon(u_\epsilon-W)\ln(e+x^2) +u_\epsilon(u_\epsilon^2-1)}{h}+O(z^2)
$$
It follows that in a distributional sense, $u_\eps$ satisfies 
$$
D^\al u_\eps+2\epsilon(u_\epsilon-W)\ln(e+x^2) +u_\epsilon(u_\epsilon^2-1)=0,
$$
which is of course \eqref{70}. 

The positivity of the operators $\cl_\eps$ follows from the fact that $u_\eps$ is an unconstrained minimizer for the functional $I_\eps$ in \eqref{50}. Indeed, for all test functions, $h$, we have that 
$g(\de):=I_\eps[u_\eps+\de h]$ achieves a minimum at $\de=0$. It follows that the first variation is zero, which yielded \eqref{70}, while on the other hand, the second variation is easily written 
$$
g(\de)=I_\eps[u_\eps]+ \f{\de^2}{2}  \dpr{ \cl_\eps h}{h}+o(\de^2),
$$
whence $\cl_\eps\geq 0$. 
\end{proof}
\subsection{Uniform in $\eps$ a priori estimates}
Clearly our goal is to pass to a limit $\eps\to 0+$ in the equation \eqref{70}. In order to ensure that such limit is possible, we shall need to establish a number of estimates for $v_\eps$, which are uniform in the range $0<\eps<1$. To this end, we have the following proposition.

\begin{proposition}[Uniform a priori estimates]
	\label{prop:20} 
	We have 
	\begin{eqnarray}
		\label{c:17} 
		\sup_{0<\eps<1} \|q_\eps\|_{H^{\f{\al}{2}}}<\infty, \\
		\label{c:18} 
		\sup_{0<\eps<1}(\|v_\eps\|_{L^\infty}+\|D^{\f{\al}{2}} v_\eps\|_{L^2})<\infty.
	\end{eqnarray}
\end{proposition}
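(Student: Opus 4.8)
The plan is to squeeze every estimate out of the single inequality $I_\eps[u_\eps]=\cj_\eps\le M_0$ from \eqref{a:5}, supplemented by one truncation comparison and the fractional chain rule of Proposition~\ref{le:po}. Throughout I write $u_\eps=W+v_\eps$ and read $q_\eps:=1-u_\eps^2$, the combination directly controlled by the potential part of the functional.

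First, the homogeneous bounds come essentially for free. Writing $I_\eps[u_\eps]=\tfrac12\|D^{\al/2}u_\eps\|^2+\eps\int_\rone(u_\eps-W)^2\ln(e+x^2)\,dx+\tfrac14\int_\rone(u_\eps^2-1)^2\,dx$ as in \eqref{50}, all three summands are nonnegative, so $I_\eps[u_\eps]\le M_0$ gives at once the $\eps$-uniform bounds $\|D^{\al/2}u_\eps\|_{L^2}^2\le 2M_0$ and $\|q_\eps\|_{L^2}^2=\|1-u_\eps^2\|_{L^2}^2\le 4M_0$. Since $W\in\dot{H}^{\al/2}$ by \eqref{25}, the triangle inequality then gives $\sup_{0<\eps<1}\|D^{\al/2}v_\eps\|_{L^2}<\infty$, which is the $\dot{H}^{\al/2}$ half of \eqref{c:18} and the $L^2$ half of \eqref{c:17}.

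The main point, and the step I expect to require the most care, is the uniform $L^\infty$ bound. The difficulty is that the weighted term $\eps\int v_\eps^2\ln(e+x^2)$ degenerates as $\eps\to0$, so $v_\eps$ carries no $\eps$-uniform $L^2$ bound and one cannot route $L^\infty$ control through Sobolev embedding. Instead I would prove $\|u_\eps\|_{L^\infty}\le1$ by comparing $u_\eps$ with its truncation $\widetilde u_\eps:=\max(-1,\min(1,u_\eps))$. Since $t\mapsto\max(-1,\min(1,t))$ is an odd $1$-Lipschitz contraction fixing $[-1,1]$: (i) $\widetilde u_\eps$ agrees with $u_\eps$ for large $|x|$ and its Gagliardo seminorm does not exceed that of $u_\eps$, so (by the Dirichlet-form contraction property, valid for $\al\in(0,2)$) $\|D^{\al/2}\widetilde u_\eps\|_{L^2}\le\|D^{\al/2}u_\eps\|_{L^2}<\infty$ and $\widetilde v_\eps:=\widetilde u_\eps-W\in\dot{H}^{\al/2}$; (ii) because $W$ takes values in $[-1,1]$, one checks pointwise that $|\widetilde u_\eps-W|\le|u_\eps-W|$ and $|1-\widetilde u_\eps^2|\le|1-u_\eps^2|$, the latter strictly wherever $|u_\eps|>1$, so $\widetilde v_\eps\in L^2\cap L^2(\ln(e+|\cdot|^2))$ and neither the weighted nor the potential term increases under truncation. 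Hence $\widetilde u_\eps$ is an admissible competitor with $I_\eps[\widetilde u_\eps]\le I_\eps[u_\eps]$; minimality forces equality, which by the strict inequality in (ii) is impossible unless $|u_\eps|\le1$ a.e. Therefore $\|v_\eps\|_{L^\infty}=\|u_\eps-W\|_{L^\infty}\le2$ uniformly, completing \eqref{c:18}.

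Finally, for the homogeneous part of \eqref{c:17} I would write $\|D^{\al/2}q_\eps\|_{L^2}=\|D^{\al/2}(u_\eps^2)\|_{L^2}$ and invoke Proposition~\ref{le:po} with $F(t)=t^2$, $r=q=2$, $p=\infty$ (permissible since $\al/2\in(1/2,1)\subset(0,1)$ and the remark following that proposition covers the endpoint $p=\infty$; one could equally use the Kato--Ponce estimate \eqref{li:10}), obtaining $\|D^{\al/2}(u_\eps^2)\|_{L^2}\le C\|u_\eps\|_{L^\infty}\|D^{\al/2}u_\eps\|_{L^2}\le C\sqrt{2M_0}$ from the two previous steps. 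Together with $\|q_\eps\|_{L^2}\le\sqrt{4M_0}$ this gives \eqref{c:17} and finishes the proof.
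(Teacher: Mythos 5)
Your proof is correct, and it reaches the conclusion by a genuinely different route at the one step where the paper has to work, namely the uniform $L^\infty$ bound. The paper never compares $u_\eps$ with a competitor: it splits $v_\eps=v_\eps\chi(2W+v_\eps)+v_\eps(1-\chi(2W+v_\eps))$, bounds the first piece pointwise by $4$ from support considerations, and controls the second in $H^{\al/2}$ (hence in $L^\infty$) by combining the uniform $L^2$ bound on $q_\eps=2Wv_\eps+v_\eps^2$ with the fractional chain rule of Proposition~\ref{le:po}. You instead truncate $u_\eps$ to $[-1,1]$ and invoke minimality, using that a $1$-Lipschitz contraction does not increase the Gagliardo seminorm (which for $s=\al/2\in(0,1)$ equals an exact constant times $\|D^s\cdot\|_{L^2}^2$ by Plancherel, so the comparison holds with constant one) together with the pointwise monotonicity of the weighted and potential terms under truncation; the admissibility checks in your points (i)--(ii), including oddness and the boundary values, are the right ones. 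Your argument is shorter and gives the strictly stronger conclusion $|u_\eps|\le 1$ already at the $\eps$-level, which the paper only recovers for the limit $u$ via the Perron--Frobenius property. The trade-off is that the contraction property is confined to differentiation order $\al/2<1$ -- consistently with the fact that for $\al\in(2,4)$ the kink genuinely overshoots $1$, see \eqref{fr:23} -- whereas the paper's decomposition is reused verbatim in the super-Laplacian existence proof. One presentational point: you redefine $q_\eps$ as $1-u_\eps^2$, while the proposition's $q_\eps$ is $2Wv_\eps+v_\eps^2=u_\eps^2-W^2$; the two differ by the fixed, exponentially decaying function $1-W^2\in H^{\al/2}\cap L^2$, so your bounds transfer to \eqref{c:17}, but this identification should be stated explicitly.
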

\begin{proof}
	As we have established that $v_\eps$ is a minimizer, we have that $J_\eps[v_\eps]=J_\eps\leq M_0$, according to \eqref{a:5}. One immediate observation, say from \eqref{64},  is the uniform in $\eps$ bound 
	\begin{equation}
		\label{a:10} 
		\|D^{\alpha/2} v_\eps\|\leq C. 
	\end{equation}
	On the other hand, from \eqref{62} one has control of $\|v_\eps\|$ in the form $\|v_\eps\|\leq C \eps^{-\f{1}{2}}$, which  blows up as $\eps\to 0+$. As such, this bound is not very useful for our purposes. On the other hand,  $v_\eps\in L^\infty$ by Sobolev embedding, but at this point, with the estimates at hand,  we cannot guarantee that  $\|v_\eps\|_{L^\infty}$ does not blow up as $\eps\to 0$. 
	
	Instead, we develop alternative $L^p, 2<p\leq \infty$ and other substitutes, which will allow us to complete the said limiting procedure. Specifically, denote 
	$$
	q_\eps:= 2W v_\eps+v_\eps^2.
	$$
	We claim  a uniform in $\eps$,  bound on $\|q_\eps\|$. Indeed, we have by Cauchy-Schwartz
$$
		M_0\geq J_\eps[v_\eps]\geq 	\int_\rone (W^2-1+q_\eps)^2 dx\geq \|W^2-1\|^2+2\dpr{W^2-1}{q_\eps}+ \|q_\eps\|^2\geq  \f{1}{2} \|q_\eps\|^2 - \|W^2-1\|^2. 
$$
It follows that 
\begin{equation}
	\label{a:11} 
	\|q_\eps\|^2 \leq M_0+\|W^2-1\|^2=:M_1
\end{equation}
Next, we aim at obtaining a uniform bound $\sup_{0<\eps<1} \|v_\eps\|_{L^\infty}$. 

Recall the smooth cut-off function $\chi$, which is identically $1$ on $|z|<1$, and vanishes on $|z|>2$. Let 
$$
v_\eps=v_\eps\chi(2W+v_\eps)+ v_\eps(1-\chi(2W+v_\eps))=:v_{\eps, 1}+v_{\eps,2}.
$$
Clearly, on the support of $v_{\eps,1}$, we have $|v_\eps(x)|\leq 2+2 |W(x)|\leq 4$, whence $\|v_{\eps,1}\|_{L^\infty}\leq 4$. 

Regarding $v_{\eps,2}$, we have that on its support $|2W(x)+v_\eps(x)|\geq 1$, whence 
\begin{equation}
	\label{a:18} 
	\int_\rone v_{\eps,2}^2(x) dx \leq \int_\rone v_{\eps}^2(x)(2W+v_\eps(x))^2 dx=\|q_\eps\|^2.
\end{equation}
 This, together with \eqref{a:11}  yields a uniform $L^2$ bound for $v_{\eps,2}$. However, 
 \begin{eqnarray*}
 	\|D^{\al/2} v_{\eps,2}\| &\leq &  \|D^{\al/2} v_{\eps}\|+\|D^{\al/2}\chi(2W+v_\eps)\|\leq C \|D^{\al/2} v_{\eps}\|+ C \|D^{\al/2}[2W+v_\eps]\|\leq \\
 	&\leq & C_1 \|D^{\al/2} v_{\eps}\|+C\|D^{\al/2} W\|.
 \end{eqnarray*}
where we have used Proposition  \ref{le:po} to control 
$$
\|D^{\al/2}\chi(2W+v_\eps)\|\leq C \|\chi'(v_\eps)\|_{L^\infty} (\|D^{\al/2} W\|+\|D^{\al/2} v_\eps)\|.
$$
 All in all, we obtain uniform bounds on $\|v_{\eps,2}\|_{H^{\al/2}}$, which controls $\|v_{\eps,2}\|_{L^\infty}$. This together with the bound $\|v_{\eps,1}\|_{L^\infty}\leq 4$, provides a uniform in $\eps$ bound, which we record 
\begin{equation}
	\label{80} 
\sup_{0<\eps<1}	\|v_\eps\|_{L^\infty}\leq C. 
\end{equation}
We are then ready for uniform estimates for $q_\eps$. Indeed, from Proposition \ref{prop:Li}, specifically the estimate \eqref{li:10}, we have 
\begin{equation}
	\label{150} 
	\|D^{\f{\al}{2}} q_\eps\|\leq C (\|D^{\f{\al}{2}} v_\eps\| \|W\|_\infty + \|D^{\f{\al}{2}} W\| \|v_\eps\|_\infty+ \|D^{\f{\al}{2}} v_\eps\| \|v_\eps\|_\infty)\leq M_3,
\end{equation}
where the last uniform estimate follows from \eqref{a:10}, \eqref{80} and \eqref{25}. 
\end{proof}
We are now ready for the limit $\eps\to 0+$. 
\begin{proposition}
	\label{prop:40} 
	
	Let $\al\in (1,2)$. Then, 
	there is a distributional solution \\ $u\in L^\infty(\rone)\cap \dot{H}^{\f{\al}{2}}(\rone)\cap_{l=1}^\infty \dot{H}^l(\rone)$ of \eqref{30}. In particular, $u\in C^\infty(\rone)$. Moreover, 
	\begin{enumerate}
		\item  the  linearized operator  $\cl=D^\al+2-3(1-u^2)  \geq 0$ is non-negative, $\cl[u']=0$. 
		\item $\lim_{x\to \pm \infty} u(x)=\pm 1$ and $u$ is monotonically increasing. \\ In particular, $-1<u(x)<1$ for all $x\in \rone$. 
	\end{enumerate}  
\end{proposition}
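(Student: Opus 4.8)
The plan is to pass to the limit $\eps\to0+$ in the Euler--Lagrange equation \eqref{70} using the uniform bounds of Proposition \ref{prop:20}, and then bootstrap regularity and extract the qualitative properties of the limit. From $\sup_{0<\eps<1}\big(\|v_\eps\|_{L^\infty}+\|D^{\al/2}v_\eps\|_{L^2}+\|q_\eps\|_{H^{\al/2}}\big)<\infty$ together with Banach--Alaoglu and the compactness of $\dot H^{\al/2}\hookrightarrow L^2_{loc}$ (valid as $\al/2>\tfrac12$), I would extract a subsequence with $v_\eps\rightharpoonup v$ in $\dot H^{\al/2}$, $v_\eps\to v$ in $L^2_{loc}$ and a.e., $v_\eps\overset{*}{\rightharpoonup}v$ in $L^\infty$, and $q_\eps\rightharpoonup q$ in $H^{\al/2}$; the a.e.\ limit identifies $q=2Wv+v^2=u^2-W^2$ with $u:=W+v$, and gives $\|v\|_{L^\infty}\le C$. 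The penalty term is harmless: for $h\in C_0^\infty$, $|\langle 2\eps v_\eps\ln(e+x^2),h\rangle|\le C\eps\|v_\eps\|_{L^\infty}\to0$. The cubic term $(W+v_\eps)\big((W+v_\eps)^2-1\big)=(W+v_\eps)(W^2-1+q_\eps)$ converges in $L^1_{loc}$ (an $L^\infty$-bounded, a.e.-convergent factor times an $L^2_{loc}$-convergent factor), while $D^\al(v_\eps+W)\to D^\al(v+W)$ in $\cd'$ by weak convergence in $\dot H^{\al/2}$ (pairing against $D^{\al/2}h\in L^2$). Hence $u\in L^\infty\cap\dot H^{\al/2}$ solves $D^\al u+u(u^2-1)=0$ in $\cd'(\rone)$, with $q=u^2-W^2\in H^{\al/2}$.

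Next I would run the bootstrap. Since $1-u^2=(1-W^2)-q\in L^2\cap L^\infty$ (using $H^{\al/2}\hookrightarrow L^\infty$) and $u\in L^\infty$, the equation gives $D^\al u=u(1-u^2)\in L^2$, hence $u\in\dot H^\al$; interpolating with $\dot H^{\al/2}$ and using $\tfrac{\al}{2}<1<\al$ yields $u\in\dot H^1$, so $u'\in L^2$ and $u$ is continuous. Differentiating the equation in $\cd'$, $D^\al u'=(1-3u^2)u'\in L^2$, so $u'\in H^\al=D(\cl)$ and
$$
\cl[u']=D^\al u'+\big(3u^2-1\big)u'=0 .
$$
Non-negativity of $\cl$ follows from \eqref{l110}: for $h\in C_0^\infty$, $\langle\cl_\eps h,h\rangle=\|D^{\al/2}h\|^2+2\|h\|^2-3\int(1-u_\eps^2)h^2+2\eps\int\ln(e+x^2)h^2\ge0$; letting $\eps\to0$ (last term vanishes, third converges by $L^2_{loc}$ convergence of $u_\eps$) gives $\langle\cl h,h\rangle\ge0$, and density of $C_0^\infty$ in $H^{\al/2}$ gives $\cl\ge0$.

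Now I would extract the structure. Since $q=u^2-W^2\in H^{\al/2}\hookrightarrow C_0(\rone)$, we have $u^2(x)\to1$ as $|x|\to\infty$. As $\cl\ge0$, $\cl[u']=0$, and $u'\in L^2\setminus\{0\}$ (if $u'\equiv0$ then $u\equiv0$ by oddness, forcing $q=-W^2\notin L^2$), zero is the bottom eigenvalue of $\cl$. Because $3u^2-1\in L^\infty$ and the kernel of $e^{-tD^\al}$ is strictly positive for $\al\in(1,2)$, the semigroup $e^{-t\cl}$ is positivity improving, so by the Perron--Frobenius property \cite{FL} the bottom eigenvalue is simple with a strictly positive eigenfunction; hence $u'$ has a strict constant sign and $u$ is strictly monotone. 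A bounded monotone function has limits $a_\pm$ at $\pm\infty$ with $a_\pm^2=1$, and oddness forces $a_-=-a_+$; replacing $u$ by $-u$ if necessary --- which preserves oddness, the equation, $\cl$ and $\cl[u']=0$ --- we may take $u$ increasing, whence $\lim_{x\to\pm\infty}u(x)=\pm1$ and $-1<u(x)<1$. Finally, now that $v=u-W\to0$ at $\pm\infty$ (so $v\in L^2$, using $q\in L^2$ and $|2W+v|\ge c>0$ near $\infty$), iterating the fractional Leibniz rules of Propositions \ref{prop:Li}--\ref{le:po} on $D^\al u=u(1-u^2)$ upgrades $u\in\dot H^{j\al/2}$ for every $j\ge1$; by interpolation $u\in\bigcap_{s\ge\al/2}\dot H^s$ and $u'\in\bigcap_{s\ge0}\dot H^s$, so $u\in C^\infty$.

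The hard part is the step from the variational limit to a genuine kink: weak limits preserve neither the prescribed boundary values nor the bound $|u|\le1$, both of which enter the approximate problem \eqref{50} only through the $\eps$-penalty. The resolution above recovers them a posteriori --- monotonicity from the positivity of $\cl$ through Perron--Frobenius, and then the limits $\pm1$ and $-1<u<1$ from monotonicity combined with $u^2\to1$ --- rather than trying to keep them in the limit; this is precisely the mechanism that fails for $\al>2$, where $\cl$ loses the Perron--Frobenius property.
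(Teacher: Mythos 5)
Your proposal is correct and follows essentially the same route as the paper: pass to the limit in the Euler--Lagrange equation \eqref{70} using the uniform bounds of Proposition \ref{prop:20}, obtain $\cl\ge 0$ as a limit of the quadratic forms $\dpr{\cl_\eps h}{h}\ge 0$, identify $u'$ as the (nonzero) ground state via the Perron--Frobenius property to get monotonicity, and recover the boundary values from $q\to 0$ at infinity combined with monotonicity and the symmetry $u\to-u$. The only differences are cosmetic: you use Rellich/a.e.\ convergence where the paper uses the $C^{(\al-1)/2}$ bound and Arzela--Ascoli (note that the local compactness needs the uniform $L^\infty$ bound in addition to the $\dot H^{\al/2}$ bound, which you do have), and you pair $D^{\al/2}u_\eps$ weakly against $D^{\al/2}h$ where the paper moves $D^\al$ onto the test function via Lemma \ref{app:10}.
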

\begin{proof}
	We already have control of $\sup_{\eps: 0<\eps<1} \|v_\eps\|_{H^{\f{\al}{2}}}$ from \eqref{c:17}. Note that by the Sobolev embedding \eqref{sob:10} and the bounds in  \eqref{c:18}, we have the following uniform bound in the Lipschitz space 
	$$
	\sup_{\eps: 0<\eps<1} \|v_\eps\|_{C^{\f{\al-1}{2}}(\rone)}<\infty
	$$
	By the Arzela-Ascoli theorem, it follows that the family $\{u_\eps\}_{0<\eps<1}$ is pre-compact in any $C[-N,N]$, and so one can select a uniformly convergent on the compact subsets of $\rone$ subsequence \\ $u_{\eps_n} \rightrightarrows u$, so that $u\in C^{\f{\al-1}{2}}(\rone)$. We test these convergences in the Euler-Lagrange equation \eqref{70}. We choose a test function $\Psi\in C^\infty_0(\rone)$, and we obtain 
	\begin{equation}
		\label{c:20} 
			\dpr{D^\alpha u_\eps}{\Psi} +2 \epsilon \dpr{v_\epsilon}{\Psi  \ln(e+x^2)} +\dpr{u_\epsilon (u_\epsilon^2-1)}{\Psi}=0.
	\end{equation}
	The uniform convergence $u_{\eps_n} \rightrightarrows u$ and the uniform bound on $\|u_\eps\|_\infty$ guarantees by the Lebesgue dominated convegrence theorem that 
	$$
	\lim_{\eps\to 0+} \dpr{u_\epsilon (u_\epsilon^2-1)}{\Psi}= \dpr{u (u^2-1)}{\Psi}.
	$$
	As $\Psi_1:=\Psi  \ln(e+x^2)$ is also a Schwartz function, we have that 
	$$
	|\dpr{v_\epsilon}{\Psi  \ln(e+x^2)} |\leq \|v_\eps\|_\infty \|\Psi_1\|_{L^1}\leq M_5, 
	$$
	whence $\lim_{\eps\to 0+}\eps \dpr{v_\epsilon}{\Psi  \ln(e+x^2)}=0$. Finally, acoording to Lemma \ref{app:10}, we have that $D^\al \Psi\in L^1(\rone)$, whence by the Lebesgue dominated convergence theorem, we conclude 
	$$
	 \lim_{\eps\to 0+} \dpr{D^\al u_\eps}{\Psi}=\lim_{\eps\to 0+} 
	\dpr{u_\eps}{D^\al  \Psi}=\dpr{u}{D^\al  \Psi}.
	$$
	In all, we recover that $u$ satisfies the equation 
	\begin{equation}
		\label{140} 
		D^\al u+ u(u^2-1)=0
	\end{equation}
	in a distributional sense. The positivity of the linearized operator is now a direct consequence. Indeed, take again a test function $\Psi\in C^\infty_0(\rone)$. We have 
	$$
	\dpr{\cl_\eps \Psi}{\Psi}= \|D^{\f{\al}{2}}\Psi\|^2+2 \|\Psi^2\|^2-3 \int_{-\infty}^\infty  (1-u_\eps^2) \Psi^2 +2\eps \int_{-\infty}^\infty \Psi^2(x) \ln(e+x^2) dx \geq 0,
	$$
	by the positivity of $\cl_\eps$. On the other hand, 
	since $u_{\eps_n}\rightrightarrows u$ on the compact subsets, 
	we clearly have by Lebesgue dominated convergence theorem
	$$
	\lim_{n\to \infty} \left[-3 \int_{-\infty}^\infty  (1-u_{\eps_n} ^2) \Psi^2 +2\eps_n \int_{-\infty}^\infty \Psi^2(x) \ln(e+x^2) dx\right] = -3 \int_{-\infty}^\infty  (1-u^2) \Psi^2. 
	$$
	In all, 
	$$
	\dpr{\cl \Psi}{\Psi} = \lim_{n\to \infty} 	\dpr{\cl_{\eps_n} \Psi}{\Psi} \geq 0.
	$$
	
	We now bootstrap the bounds on $u$ to  $\cap_{l=1}^\infty  \dot{H}^l(\rone)$ bounds. Indeed, starting with \eqref{140}, we have by using the bounds \eqref{25}
	$$
	\|D^\al u\|=\|u(u^2-1)\|\leq \|u\|_{L^\infty} \|W^2-1+2 (W^2-1) q+q^2\|\leq C(\|q\|_\infty+\|q\|_{L^4}^2)\leq C (1+\|q\|_{H^{\f{\al}{2}}}^2).
	$$
	where in the last step, we have used Sobolev embedding $H^{\f{\al}{2}}(\rone)\hookrightarrow  L^4(\rone)$. This improves the {\it a priori} bounds to $u\in \dot{H}^\al$. It follows, much as in the proof of \eqref{150} that 
	$$
	\|D^\al q\|=\|D^{\al-1} q'\| \leq C(1+\|v\|_{\dot{H}^\al}^2).
	$$
One can then apply the same approach to estimate $D^{2\al}$ from the relation \eqref{140} $\|D^{\alpha+1} u\|=\|(u(u^2-1))'\|$ and use the estimates obtained for $\|u'\|\leq \max(\|D^{\alpha} u\|, \|D^{\f{\alpha}{2}} u\|)$ and so on. In this fashion, we obtain bounds for $\|D^{l +\alpha} u\|\leq C_l, l=0,1,  \ldots$. Note however that with these arguments, we can never control homogeneous Sobolev norm below the {\it a priori} norm, that is $\|D^\be u\|, \be<\f{\al}{2}$. 

Having such a smooth solution $u$ allows us to take a spatial derivative in \eqref{140}, which implies 
$$
\cl u'=D^\al u'+2u'-3(1-u^2) u'=0.
$$
In other words, $u'\in D(\cl)$ is an eigenfunction for the linearized operator $\cl$ corresponding to the zero eigenvalue. Recall that $\cl\geq 0$, which makes zero the lowest eigenvalue for $\cl$. 	Note however,  that the operator $\cl$ satisfies the Perron-Frobenius property, \cite{FL}, whence the eigenfuncton corresponding to the  lowest eigenvalue does not change sign. It follows that $u'\geq 0$ or $u'\leq 0$. Either way, $u$ is monotone. 
	
	We now address the behavior at $\pm \infty$ of $u$. Note that the uniform convergence $u_{\eps_n} \rightrightarrows u$  over the compact 
	subsets of $\rone$ guarantees that $q_{\eps_n}\rightrightarrows q=2W v+v^2$ over the compact 
	subsets of $\rone$. In addition, from \eqref{c:17}, we have that $q\in H^{\f{\al}{2}}\subset C^{\f{\al-1}{2}}$. Thus, and by the fact that $\sup_{0<\eps<1}\|q_\eps\|_{L^2}<\infty$, it follows that $\|q\|_{L^2}<\infty$ and so 
	$\lim_{|x|\to \infty} q(x)=0$. Since $q=2W v+v^2=v(2W+v)$ this leads to two possible conclusions at each infinity. Say at $+\infty$, this means that either $\lim_{x\to +\infty} v(x)=0$ or $\lim_{x\to +\infty} v(x)=-2$. Similarly, at $-\infty$,  either $\lim_{x\to -\infty} v(x)=0$ or $\lim_{x\to -\infty} v(x)=2$. 
	
	In terms of $u$, $\lim_{x\to +\infty} u(x)=1$ or $\lim_{x\to +\infty} u(x)=-1$. Similarly,   either $\lim_{x\to -\infty} u(x)=-1$ or $\lim_{x\to -\infty} u(x)=1$. Note that by the monotonicity of $u$ established above, we have that only two of the four options above are viable, namely 
	$$
	\lim_{x\to -\infty} u(x)=-1, \lim_{x\to +\infty} u(x)=1,
	$$
	or 
	$$
	\lim_{x\to -\infty} u(x)=1, \lim_{x\to +\infty} u(x)=-1,
	$$
	Either way, we have constructed a solution with $\lim_{x\to -\infty} u(x)=-1, \lim_{x\to +\infty} u(x)=1$, since in the other possible scenario, we just perform the transformation $u\to -u$, which still yields a solution by the symmetry of the equation. 
\end{proof}

\subsection{The kink $u$ minimizes \eqref{48}}
 Even though the solution $u$ was produced as a limit of minimizers of the approximate problems \eqref{50}, we can in fact show that it does minimize $I[u]$. 
 \begin{proposition}
     \label{prop:97}
     The kink $u$ produced in Proposition \ref{prop:40} is a local minimizer of \eqref{48}. 
 \end{proposition}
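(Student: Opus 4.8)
The plan is to show that $u$ is a critical point of $I$ with non-negative second variation and, in fact, that it is a strict local minimizer in the $\dot H^{\al/2}$-topology (after the substitution $u = W+v$, so the perturbation lives in $H^{\al/2}$). The first step is to record that $u$ solves the Euler--Lagrange equation \eqref{30}, hence the first variation $DI[u]$ vanishes: for any test direction $h \in H^{\al/2}(\rone)$ (so that $u + h$ is again admissible, approaching $\pm 1$ at $\pm\infty$), one has
\[
I[u+h] = I[u] + \langle D^\al u + u(u^2-1), h\rangle + \tfrac12 \langle \cl h, h\rangle + R[h],
\]
where $\cl = D^\al + 2 - 3(1-u^2)$ is the linearized operator from Proposition \ref{prop:40}, the linear term vanishes by \eqref{140}, and $R[h]$ collects the cubic and quartic terms in $h$ coming from $\tfrac14\int (1 - (u+h)^2)^2$. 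Explicitly $R[h] = \int_{\rone} \big( u h^3 + \tfrac14 h^4 \big)\, dx$, which is controlled by $\|h\|_{L^\infty}\|h\|_{L^2}^2 + \|h\|_{L^4}^4 \lesssim \|h\|_{H^{\al/2}}^3 + \|h\|_{H^{\al/2}}^4$ via Sobolev embedding (recall $\al>1$, so $H^{\al/2} \hookrightarrow L^\infty \cap L^4$). Thus $R[h] = o(\|h\|_{H^{\al/2}}^2)$ as $h \to 0$, and the whole question reduces to the coercivity of $\cl$ on the relevant subspace.

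The second step is the spectral input. From Theorem \ref{theo:10}(3) (equivalently Proposition \ref{prop:40}) we know $\cl \geq 0$, that $0$ is a simple isolated eigenvalue with eigenfunction $\phi' = u' > 0$ by the Perron--Frobenius property, and that $\si_{ess}(\cl) = [2,\infty)$, so $\cl$ has a spectral gap: $\cl|_{\{u'\}^\perp} \geq \ka$ for some $\ka > 0$ as in \eqref{340}. The only direction in which $\cl$ fails to be positive is the translational one, $h \parallel u'$. But this direction is exactly the one annihilated, to leading order, by moving along the translation orbit $\{\phi(\cdot + s)\}$, and $I$ is translation-invariant: $I[u(\cdot+s)] = I[u]$ for all $s$. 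Hence I would set up the standard modulation/orthogonal-decomposition argument: for $v$ with $\|v\|_{H^{\al/2}}$ small, write $u + v = \phi(\cdot + s) + w$ where $s = s(v)$ is chosen (by the implicit function theorem, using $\langle \phi', \phi'\rangle \neq 0$) so that $w \perp \phi'$ in $L^2$; this is possible and $s(v), w(v)$ depend smoothly on $v$ with $|s| + \|w\|_{H^{\al/2}} \lesssim \|v\|_{H^{\al/2}}$. Then
\[
I[u+v] = I[\phi(\cdot+s) + w] = I[\phi] + \tfrac12 \langle \cl_{\phi(\cdot+s)} w, w\rangle + o(\|w\|_{H^{\al/2}}^2) \geq I[\phi] + \tfrac{\ka}{2}\|w\|_{H^{\al/2}}^2 - o(\|w\|^2) \geq I[\phi] = I[u],
\]
for $\|v\|_{H^{\al/2}}$ sufficiently small, using that $\cl_{\phi(\cdot+s)}$ is conjugate to $\cl_\phi$ by translation and hence has the same spectral gap on $\{\phi'(\cdot+s)\}^\perp$. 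This gives the local minimization claim.

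I expect the main obstacle to be the modulation step executed cleanly at the regularity of the problem: one must verify the implicit function theorem applies to produce the $C^1$ map $v \mapsto s(v)$ on a neighborhood in $H^{\al/2}$, and one must relate the $L^2$-orthogonality $w \perp \phi'$ to an honest coercivity estimate $\langle \cl w, w\rangle \gtrsim \|w\|_{H^{\al/2}}^2$ — for this one needs not just $\cl \geq \ka$ on $\{\phi'\}^\perp$ in the $L^2$ quadratic-form sense but the stronger statement that $\langle \cl w, w\rangle + \|w\|_{L^2}^2 \gtrsim \|w\|_{H^{\al/2}}^2$ (which follows from $\langle \cl w, w \rangle = \|D^{\al/2}w\|^2 + 2\|w\|^2 - 3\int(1-u^2)w^2 \geq \|D^{\al/2}w\|^2 - C\|w\|^2$ together with the gap, in a two-step argument). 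A minor additional point is checking that the decomposition $u+v = \phi(\cdot+s)+w$ stays within the admissible class (boundary values $\pm 1$), which is automatic since translating $\phi$ preserves them and $w \in H^{\al/2}$ decays. Everything else is routine Sobolev bookkeeping, and the translation invariance of $I$ plus the spectral gap from Theorem \ref{theo:10} do the real work.
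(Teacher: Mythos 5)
Your modulation argument is correct in spirit and would work, but the paper takes a substantially shorter route that you appear not to have noticed: the parity restriction from Lemma~\ref{le:pol}. Since the odd/even decomposition inequality $I[u]\geq I[u_{\mathrm{odd}}]$ holds for every admissible $u$, the local minimization claim reduces to minimization over odd competitors. For an odd perturbation $h$ one expands $I[u+\de h]=I[u]+\frac{\de^2}{2}\dpr{\cl h}{h}+O(\de^3)$, and the translational zero mode $\phi'=u'$ is \emph{even} (as $u$ is odd), hence automatically $L^2$-orthogonal to every odd $h$. Thus $\dpr{\cl h}{h}>0$ for all odd $h\neq 0$ directly, and no modulation is needed at all: the parity constraint does the work that your implicit-function-theorem decomposition $u+v=\phi(\cdot+s)+w$, $w\perp\phi'$, is designed to do. Your route is genuinely more general (it would apply without the odd/even symmetry) and in fact aims at a slightly stronger statement (uniform coercivity in an $H^{\al/2}$-ball rather than along rays), at the cost of the modulation machinery and the two-step passage from the $L^2$ spectral gap to $H^{\al/2}$-coercivity that you correctly flag. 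One small imprecision worth fixing in your version: the orthogonality constraint should be $w\perp\phi'(\cdot+s)$ (the kernel of $\cl_{\phi(\cdot+s)}$), not $w\perp\phi'$; the two differ by an $O(|s|)$ rotation, which is harmless for small $s$ but should be stated correctly so the coercivity estimate $\dpr{\cl_{\phi(\cdot+s)}w}{w}\geq\ka\|w\|^2$ applies verbatim.
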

 {\bf Remark:} Note that if $u$ is a unique solution to \eqref{30}, then it is in fact a unique global minimizer as well. Indeed, any other local minimizer will be an odd function, which is a solution to \eqref{30} as well. By the uniqueness, this should not be possible. So, $u$ will be a unique local minimizer, and hence a global one. 
 \begin{proof}
     The kink satisfies  $u\in \dot{H}^{\f{\al}{2}}(\rone)$ and $1-u^2\in L^2(\rone)$, so $I[u]$ makes sense.  Recall also that the minimization problem \eqref{48} may be taken in the space of odd functions only. Let $h$ be an odd test function and $\delta>0$. We have, by the Euler-Lagrange equation for $I[u]$,  which is \eqref{30}, 
     $$
     I[u+\de h]=I[u]+\f{\de^2}{2}\dpr{\cl h}{h}+O(\de^3)
     $$
     As $\cl\geq 0$, with a one-dimensional kernel at zero, spanned by the  even eigenfunction $\phi'$, we have that $\dpr{\cl h}{h}>0$, whenever $h\neq 0$.  Thus, $u$ is a strict local minimizer. 
 \end{proof}

\section{(Conditional) Uniqueness for the sub-Laplacian kinks: Proof of Theorem \ref{theo:un}}
Let $u$ be  an odd solution to \eqref{30}, with $\al\in (1,2)$, in the sense of Theorem \ref{theo:chen}. Then, as the non-linearity $f(u)=u-u^3$  satisfies the assumptions ($f'(u)=1-3u^2<0$, for $u$ close to $\pm 1$), it follows that $u$ is monotonically increasing.  
 Thus,  $u(0)=0$, and  $u(x)<0, x<0$, while $u(x)>0, x>0$. Note that  as a consequence, $\sup_{x\in\rone}|u(x)-\phi(x)|\leq 1$. 

Due to the assumption that 
$1-u^2\in L^2(\rone)$, one can iterate the smoothness and decay conditions to 
$u\in \cap_{s\geq \f{\al}{2}} \dot{H}^s(\rone), u'\in \cap_{s\geq 0} H^s(\rone)$, so in particular $u\in C^\infty(\rone)$. Note that by differentiating the equation \eqref{30}, we obtain 
$$
D^\al[u']+(3u^2-1)u'=0,
$$
whence $\cl_u:=D^\al+2-3(1-u^2)$ has an eigenvalue at zero, with an eigenfunction $u'>0$. It follows, by the Perron-Frobenius property, that $\cl_u\geq 0$. 

Consider now 
$$
D^\al \phi+\phi(\phi^2-1)=0=D^\al u+u(u^2-1)
$$
Subtracting off the two yields 
$$
D^\al(u-\phi)+(u-\phi)(u^2+u \phi+\phi^2-1)=0.
$$
Thus, the fractional Schr\"odinger operator 
$$
\cl_{u, \phi}:=D^\al+2-3(1-\f{u^2+u \phi +\phi^2}{3}),
$$
 is introduced, with $L^2$ decaying potentials at $\pm\infty$ and  $\cl_{u,\phi}(u-\phi)=0$. Note that $u-\phi\in L^2(\rone)$, by the assumptions on $u$ and the properties of $\phi$. So, $z:=u-\phi$ is an  eigenfunction,  corresponding to zero eigenvalue, for $\cl_{u, \phi}$. Also, $z$ is an odd function, and as such $z\perp \phi'$. 
 Finally, 
$$
\cl_{u,\phi}=\f{1}{2}\left(\cl_u+\cl_\phi\right)-\f{(u-\phi)^2}{2}\geq \f{1}{2}\left(\cl_\phi-(u-\phi)^2\right)\geq \f{1}{2}\left(\cl_\phi-1\right).
$$
since $\cl_u\geq 0$, $\sup_{x\in\rone} |u(x)-\phi(x)|\leq 1$. Finally, restricting to the subspace ${\{\phi'\}^\perp}$, we have 
$$
P_{\{\phi'\}^\perp}\cl_{u,\phi}P_{\{\phi'\}^\perp}  \geq \f{1}{2}\left(P_{\{\phi'\}^\perp}\cl_\phi P_{\{\phi'\}^\perp}-1\right)>0, 
$$
by  the assumption $P_{\{\phi'\}^\perp} \cl_\phi P_{\{\phi'\}^\perp}\geq 1$. But then, $z=u-\phi$ must be trivial, as it would otherwise be an actual eigenfunction, corresponding to the zero eigenvalue of the strictly positive operator  $P_{\{\phi'\}^\perp} \cl_{u, \phi}P_{\{\phi'\}^\perp} $.

\section{Decay rates and asymptotics for the kinks}
\label{sec:4} 
We now take on the decay rates for $u$. We need to first prepare with a Lemma. 
\begin{lemma}
	\label{g:10} 
	Let $\al\in (1,2)$. Then, the resolvent operator $(D^\al+2)^{-1}$  is represented as 
	$$
	(D^\al+2)^{-1} F(x)=\int_{-\infty}^\infty K_\al(x-y) F(y) dy, 
	$$
	with a positive kernel $K_\al$, which has the decay rate 
	\begin{equation}
		\label{g:25} 
		0<K_\al(x)\leq \f{C_\al}{(1+|x|)^{1+\al}}.
	\end{equation}
	Moreover, we have the precise asymptotic formula 
	\begin{equation}
		\label{g:20} 
		K_\al(x)= \f{2^{\al-4} \al(\al-1) \Ga\left(\f{\al-1}{2}\right)}{\sqrt{\pi} \Ga\left(\f{2-\al}{2}\right)}     |x|^{-1-\al}+O(|x|^{-3}), \ \  |x|>1
	\end{equation}
For large $x$, we have the estimate 
\begin{equation}
	\label{g:45} 
	|K_\al'(x)|\leq C |x|^{-3}, |x|>>1.
\end{equation}
\end{lemma}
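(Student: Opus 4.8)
The plan is to work directly with $K_\al$ as the inverse Fourier transform of the symbol $m(\xi):=\big((2\pi|\xi|)^\al+2\big)^{-1}$, handling positivity, the global bound \eqref{g:25}, and the sharp asymptotics \eqref{g:20} in turn, and then re-running the asymptotic analysis on $\xi\, m(\xi)$ to get \eqref{g:45}.

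\emph{Positivity and the global bound.} Using $\f{1}{\la+2}=\int_0^\infty e^{-2t}e^{-t\la}\,dt$ with $\la=(2\pi|\xi|)^\al\ge 0$, write $m(\xi)=\int_0^\infty e^{-2t}e^{-t(2\pi|\xi|)^\al}\,dt$. For $0<\al\le 2$ the function $\xi\mapsto e^{-t|\xi|^\al}$ is positive definite (P\'olya/L\'evy), so its inverse Fourier transform $p_t$ is a nonnegative, indeed strictly positive, function (the symmetric $\al$-stable density). By Tonelli, $K_\al(x)=\int_0^\infty e^{-2t}p_t(x)\,dt>0$. Moreover, since $\al>1$ we have $m\in L^1(\rone)$ (it is $\sim (2\pi|\xi|)^{-\al}$ at infinity), so $K_\al$ is bounded and continuous; combined with the pointwise decay for $|x|>1$ proved below, this gives \eqref{g:25}.

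\emph{Asymptotics.} Fix an even $\chi\in C_0^\infty$ with $\chi\equiv 1$ near $0$ and split $m=\chi m+(1-\chi)m$. The factor $(1-\chi)m$ is smooth on $\rone$ and a symbol of order $-\al$, i.e. $|\p_\xi^k[(1-\chi)m]|\lesssim (1+|\xi|)^{-\al-k}$, hence its inverse transform is $O(|x|^{-N})$ for every $N$ once $|x|\ge 1$. For the localized part, expand near the origin by the geometric series: $m(\xi)=\f12-\f{(2\pi)^\al}{4}|\xi|^\al+r(\xi)$ with $r(\xi)=\f{(2\pi|\xi|)^{2\al}}{8}\big(1+(2\pi|\xi|)^\al/2\big)^{-1}$, so that $\chi m=\f12\chi-\f{(2\pi)^\al}{4}\,\chi|\xi|^\al+\chi r$. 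The inverse transform of $\f12\chi$ is Schwartz. Writing $\chi|\xi|^\al=|\xi|^\al-(1-\chi)|\xi|^\al$, the first summand is a homogeneous tempered distribution whose transform is computed \emph{exactly} from Lemma \ref{le:126} (for even functions the Fourier and inverse Fourier transforms agree): $\widehat{|\xi|^\al}(x)=\f{\Ga(\f{\al+1}{2})}{\pi^{\al+1/2}\,\Ga(-\f\al2)}|x|^{-1-\al}$ for $x\neq 0$, while $(1-\chi)|\xi|^\al$ is a smooth symbol and contributes $O(|x|^{-N})$. Finally $\chi r$ is compactly supported with $r(\xi)=O(|\xi|^{2\al})$ near $0$, so $\p_\xi^3(\chi r)\in L^1(\rone)$ — here one uses $2\al-3>-1\iff\al>1$ — whence its inverse transform is $O(|x|^{-3})$. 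Collecting everything, $K_\al(x)=-\f{(2\pi)^\al}{4}\cdot\f{\Ga(\f{\al+1}{2})}{\pi^{\al+1/2}\,\Ga(-\f\al2)}\,|x|^{-1-\al}+O(|x|^{-3})$ for $|x|>1$, and a short computation using $\Ga(\f{\al+1}{2})=\f{\al-1}{2}\Ga(\f{\al-1}{2})$ and $\Ga(-\f\al2)=-\f{2}{\al}\Ga(\f{2-\al}{2})$ rewrites the prefactor as $\f{2^{\al-4}\al(\al-1)\Ga(\f{\al-1}{2})}{\sqrt\pi\,\Ga(\f{2-\al}{2})}$, which is \eqref{g:20} (and is $>0$ for $\al\in(1,2)$, consistently with $K_\al>0$).

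\emph{Derivative bound and main obstacle.} For \eqref{g:45}, note $K_\al'$ is the inverse transform of $2\pi i\xi\,m(\xi)$; running the same decomposition, the leading singular contribution is that of $\xi|\xi|^\al$, a homogeneous odd distribution of degree $\al+1$ whose transform is $O(|x|^{-2-\al})$, while $\xi\cdot\f12\chi$ and the smooth-symbol pieces give $O(|x|^{-N})$ and $\xi\,\chi r$ gives $O(|x|^{-3})$ (three integrations by parts, using $2\al-2>-1$); since $\al>1$ forces $2+\al\ge 3$, every term is $\lesssim|x|^{-3}$ for $|x|>1$. The only delicate step is the bookkeeping in the asymptotic part: isolating the single homogeneous term $|\xi|^\al$ that carries the $|x|^{-1-\al}$ main term and pinning its coefficient via Lemma \ref{le:126}, checking that all cutoff/remainder pieces are genuinely negligible, and then reducing the $\Ga$-ratio to the stated form; the hypothesis $\al>1$ is precisely what makes both $m$ and $\p_\xi^3(\chi r)$ integrable, which is why the argument (and the error exponent) break at $\al=1$.
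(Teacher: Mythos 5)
Your proposal is correct and follows essentially the same route as the paper's Appendix proof: subordination/Laplace representation for positivity, the $L^1$ symbol bound for \eqref{g:25}, a low/high frequency splitting with the homogeneous distribution $|\xi|^\al$ carrying the main term via Lemma \ref{le:126}, integration by parts for the remainders, and the same $\Gamma$-identities for the prefactor (which you in fact track with cleaner signs than the paper's intermediate steps). The only cosmetic differences are that you package the Taylor remainder in the closed form $r(\xi)=\tfrac{(2\pi|\xi|)^{2\al}}{8}\bigl(1+(2\pi|\xi|)^\al/2\bigr)^{-1}$ instead of resumming the paper's geometric series term by term, and you obtain \eqref{g:45} by re-running the decomposition on $\xi\,m(\xi)$ rather than integrating by parts directly.
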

{\bf Remark:} The error terms in both \eqref{g:25}and \eqref{g:45} may be improved to the more natural rate $O(|x|^{-2-\al})$. 
In the interest of simplifying the presentation, we provide these simpler versions. 
We postpone the somewhat technical proof of Lemma \ref{g:10} to  the Appendix.
\begin{proposition}
	\label{prop:50}  Let $\al\in (1,2)$. For the solution $u$ constructed in Proposition \ref{prop:40},  there is the sharp decay rate 
	\begin{equation}
		\label{b:18} 
			|u(x)-sgn(x)|\leq C (1+|x|)^{-\al}.
	\end{equation}
	In fact, we have the following bounds on $u'$, 
	\begin{equation}
		\label{b:21} 
	 0<u'(x)< \f{C}{(1+|x|)^{1+\al}}
	\end{equation}
Moreover,  the following asymptotic formulas hold true 
\begin{eqnarray}
	\label{260} 
	u'(x) &=&\f{2^{\al-2} \al(\al-1) \Ga\left(\f{\al-1}{2}\right)}{\sqrt{\pi} \Ga\left(\f{2-\al}{2}\right)}     |x|^{-1-\al}+O(|x|^{-3}), x>>1\\
	\label{270} 
	u(x) &=&  sgn(x) - sgn(x)  \f{2^{\al-2} (\al-1) \Ga\left(\f{\al-1}{2}\right)}{\sqrt{\pi} \Ga\left(\f{2-\al}{2}\right)}     |x|^{-\al}+O(|x|^{-2})
\end{eqnarray}
\end{proposition}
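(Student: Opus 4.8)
The plan is to reduce everything to an integral equation for $u'$ and then bootstrap. Differentiating \eqref{140} yields $D^\al u' + (3u^2-1)u' = 0$, i.e. $(D^\al+2)u' = 3(1-u^2)u'$; since $u'\in H^\al(\rone)$ by Proposition \ref{prop:40} and $(1-u^2)u'\in L^2(\rone)$, Lemma \ref{g:10} turns this into the fixed-point relation
$$
u'(x) = 3\int_{-\infty}^\infty K_\al(x-y)\,\big(1-u^2(y)\big)\,u'(y)\,dy ,
$$
in which all three factors are positive, $u'\in L^1(\rone)$ with $\int u' = 2$, and $\int_{\rone}(1-u^2)u' = \big[u-\tfrac{u^3}{3}\big]_{-\infty}^{\infty} = \tfrac43$. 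For a first, non-sharp, rate note that $1-u^2$ is even, decreasing on $(0,\infty)$, and lies in $L^2(\rone)$ (because $1-W^2$ decays exponentially and $q=u^2-W^2\in L^2$), so the elementary monotonicity bound $R\,(1-u^2(R))^2\le\int_0^R(1-u^2)^2$ gives $1-u^2(x)\le C(1+|x|)^{-1/2}$, hence $p(R):=1-u(R)\le 1-u^2(R)\le CR^{-1/2}$ for $R>0$. The telescoping identity $(1-u^2)u'=\big(u-\tfrac{u^3}{3}\big)'$ then produces $\int_{|y|>R}(1-u^2)u'\,dy = 2p(R)^2\big(1-\tfrac{p(R)}{3}\big)\le CR^{-1}$ — the \emph{square} of $p$ here being crucial — and splitting the convolution at $|y|=|x|/2$, using $K_\al(x-y)\le C(1+|x|)^{-1-\al}$ on $\{|y|\le|x|/2\}$ (estimate \eqref{g:25}) and $\|K_\al\|_{L^\infty}<\infty$ on the complement, gives the borderline bound $u'(x)\le C(1+|x|)^{-1-\al}+C(1+|x|)^{-1}$.

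The sharp rate now follows by a finite bootstrap. Assume $u'(x)\le C(1+|x|)^{-a}$ and $1-u^2(x)\le C(1+|x|)^{-b}$ with $a\ge1$, $0<b\le\al$, so $a+b>1$; then $(1-u^2)u'\le C(1+|x|)^{-a-b}$ and the convolution inequality \eqref{250} gives $u'(x)=3K_\al*[(1-u^2)u']\le C(1+|x|)^{-1-\min(\al,\,a+b-1)}$. Since the new exponent exceeds $1$, integrating in $x$ improves $1-u(R)$, hence $1-u^2(R)$, to exponent $\min(\al,a+b-1)$. Running this, the pairs $(a,b)$ evolve $(1,\tfrac12)\to(\tfrac32,\tfrac12)\to(2,1)\to(1+\al,\al)$ and then stabilize, so after three steps
$$
0<u'(x)\le C(1+|x|)^{-1-\al},\qquad |u(x)-sgn(x)|\le C(1+|x|)^{-\al},
$$
the second by integrating the first from $x>0$ to $\infty$ (and using oddness, with boundedness handling $|x|\le 1$); this is \eqref{b:21} and \eqref{b:18}.

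For the asymptotics, put $G:=3(1-u^2)u'\ge0$, so $\int_{\rone} G=4$ and now $G(y)\le C(1+|y|)^{-1-2\al}$, in particular $\int|y|\,G(y)\,dy<\infty$. For $x\gg1$ write
$$
u'(x)=K_\al(x)\int_{\rone}G+\int_{\rone}\big(K_\al(x-y)-K_\al(x)\big)G(y)\,dy =: 4K_\al(x)+E(x).
$$
On $\{|y|\le|x|/2\}$ the mean value theorem with $|K_\al'(\xi)|\le C|\xi|^{-3}$ for large $|\xi|$ (estimate \eqref{g:45}) bounds the integrand by $C|x|^{-3}|y|G(y)$, contributing $O(|x|^{-3})$; on $\{|y|>|x|/2\}$ one uses $|K_\al(x-y)-K_\al(x)|\le K_\al(x-y)+K_\al(x)$ together with $G(y)\le C|x|^{-1-2\al}$ there and $\|K_\al\|_{L^1}<\infty$, contributing $O(|x|^{-1-2\al})=O(|x|^{-3})$ since $\al>1$. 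Hence $E(x)=O(|x|^{-3})$, and inserting the expansion \eqref{g:20} of $K_\al$ into $4K_\al(x)$ gives \eqref{260} (note $4\cdot 2^{\al-4}=2^{\al-2}$); integrating \eqref{260} from $x$ to $\infty$ yields \eqref{270}, whose constant is $\tfrac1\al$ times that in \eqref{260}.

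The hardest point is getting off the ground. The soft inputs — $1-u^2\in L^2$ plus half-line monotonicity — only give decay $|x|^{-1/2}$, and at that level the convolution inequality \eqref{250} is inapplicable (its exponent would be negative); it is precisely the quadratic gain in the telescoping identity for $(1-u^2)u'$ that upgrades this to the borderline $|x|^{-1}$ rate from which \eqref{250} can finally take over and iterate to the optimal exponent. A secondary technical point is making the remainder in the asymptotic extraction genuinely $O(|x|^{-3})$, which relies on $G$ — not merely $u'$ — decaying like $|x|^{-1-2\al}$, together with $1+2\al>3$.
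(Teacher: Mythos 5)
Your proposal is correct, and the overall architecture (integral equation $u'=3K_\al*[(1-u^2)u']$, an initial sub-optimal decay rate, a bootstrap through the convolution inequality \eqref{250}, and the extraction $u'=4K_\al(x)+E(x)$ with the error split at $|y|=|x|/2$) matches the paper's. The one step where you take a genuinely different route is the launch of the bootstrap. The paper gets off the ground with a dyadic recursion: it sets $a_k=\max_{x\ge 2^k}u'(x)$, uses only the \emph{qualitative} fact that $1-u^2(x)<\de$ for $x$ large together with Cauchy--Schwarz against $\|u'\|_{L^2}$ on the far region, and iterates $a_k\le C_\al\de\,a_{k-1}+C_\al 2^{-k(\al+1/2)}$ with $\de$ chosen small; this lands directly at $u'\lesssim(1+|x|)^{-\al-1/2}$, so only one further pass through \eqref{250} is needed. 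You instead extract a \emph{quantitative} rate $1-u^2(x)\lesssim(1+|x|)^{-1/2}$ from monotonicity of $1-u^2$ on the half-line plus $1-u^2\in L^2$, and then exploit the telescoping identity $\int_{|y|>R}(1-u^2)u'=2p(R)^2(1-p(R)/3)$ to get the tail mass $O(R^{-1})$ and hence $u'\lesssim(1+|x|)^{-1}$; this starts lower and costs you three bootstrap iterations instead of one, but it avoids the smallness parameter and the recursion entirely and is arguably more elementary. Both launches use legitimately available inputs ($u'\in L^2$ for the paper; $\|q\|_{L^2}<\infty$ hence $1-u^2\in L^2$, oddness and monotonicity for you). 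Your diagnosis of why the quadratic gain in the telescoping identity is indispensable — the $|x|^{-1/2}$ rate alone puts \eqref{250} out of reach — is accurate, and your error analysis in the asymptotic extraction, including the observation that $G\lesssim(1+|y|)^{-1-2\al}$ with $1+2\al>3$ is what makes the remainder $O(|x|^{-3})$, coincides with the paper's.
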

\begin{proof}
	We first show the bounds on $u'$. We have already shown that $\|u'\|_{L^\infty}<\infty$. Going forward, we assume that $x>>1$, the case of a large negative $x$ is handled in a symmetric way. 
	
Observe that  by construction, we have that  $\lim_{x\to +\infty} u(x)=1$. Thus, for every $\delta>0$, there is $k_0=k_0(\de)$, so that $0<1-u^2(x)<\de$, as long as $x>2^{k_0}$. Fix  $\de>0$ for the moment, to be determined momentarily. 

Observe that since $\cl[u']=0$, we may rewrite this as an integral equation (noting that  all the quantities inside the integral, including $K_\al$ are positive). We obtain 
\begin{eqnarray*}
	0<u'(x) &=& 3(D^\al+2)^{-1}[(1-u^2)u']=3\int_\rone K_\al(x-y)(1-u^2(y))u'(y) dy\leq \\
	&\leq & C_\al \int_\rone \f{1}{(1+|x-y|)^{1+\al}}(1-u^2(y))u'(y) dy
\end{eqnarray*}
Since $\lim_{x\to \infty} u'(x)=0$ as well, we may define $a_k:=\max_{x\geq 2^k} u'(x)=u'(x_k)$ for some $x_k\geq 2^k$. Take $k>k_0$ and evaluate at $x_k$. We obtain 
	\begin{eqnarray*}
		a_k &\leq & C_\al \int_\rone \f{1}{(1+|x_k-y|)^{1+\al}}(1-u^2(y))u'(y) dy\leq 
		C_\al \de \int_\rone \f{1}{(1+|x_k-y|)^{1+\al}} u'(y) dy \\
		&\leq & C_\al \de \left[\int_{-\infty}^{\f{x_k}{2}}  \f{1}{(1+|x_k-y|)^{1+\al}} u'(y) dy+ \int_{\f{x_k}{2}}^\infty \f{1}{(1+|x_k-y|)^{1+\al}} u'(y) dy\right].
	\end{eqnarray*}
	We estimate the first integral by Cauchy-Schwartz 
	$$
	\int_{-\infty}^{\f{x_k}{2}}  \f{1}{(1+|x_k-y|)^{1+\al}} u'(y) dy\leq \left(	\int_{-\infty}^{\f{x_k}{2}}  \f{1}{(1+|x_k-y|)^{2(1+\al)}}\right)^{\f{1}{2}} \|u'\|\leq C_\al x_k^{-\al-\f{1}{2}},
	$$
	while 
	the second is bounded 
	$$
	 \int_{\f{x_k}{2}}^\infty \f{1}{(1+|x_k-y|)^{1+\al}} u'(y) dy\leq a_{k-1}  \int_{\f{x_k}{2}}^\infty \f{1}{(1+|x_k-y|)^{1+\al}} dy\leq C_\al a_{k-1}.
	$$
	Altogether, we have 
	\begin{equation}
		\label{200} 
		a_k \leq C_\al\de a_{k-1} + C_\al 2^{-k(\al+\f{1}{2})}. 
	\end{equation}
	Letting $b_k:=C_\al^{-1} a_k$, we have 
	\begin{equation}
		\label{210} 
		b_k \leq C_\al\de b_{k-1} +  2^{-k(\al+\f{1}{2})}. 
	\end{equation}
	At this point, we select $\de=\de(\al): C_\al\de<\f{1}{100}$, and then the corresponding $k_0=k_0(\al)$. With this choices, and for  $k>k_0$, the estimate \eqref{200}can be iterated as follows 
\begin{eqnarray*}
	b_k &\leq &  \f{1}{100}b_{k-1}+2^{-k(\al+\f{1}{2})}\leq \f{1}{100}\left( \f{1}{100} b_{k-2}+2^{-(k-1)(\al+\f{1}{2})}\right)+ 2^{-k(\al+\f{1}{2})}\leq \ldots \\
	&\leq & \f{1}{100^{k-k_0}} b_{k_0}+2^{-k(\al+\f{1}{2})} \sum_{l=0}^{k-k_0} \left(\f{2^{(\al+\f{1}{2})}}{100}\right)^l\leq 
	C_\al 2^{-k(\al+\f{1}{2})}. 
\end{eqnarray*}
	It follows that $\max_{x>2^k} u'(x)\leq C 2^{-k(\al+\f{1}{2})}$, which yield the intermediate bound

	\begin{equation}
		\label{220}
	u'(x)\leq \f{C}{(1+|x|)^{\al+\f{1}{2}}}.
\end{equation}
	Here, note that the bound just established is weaker than \eqref{b:21}. We can however bootstrap it as follows. First off, 
	one can write 
		\begin{equation}
		\label{230}
	1-u(x)=\int_x^\infty u'(y) dy\leq C \int_x^\infty \f{1}{(1+|y|)^{\al+\f{1}{2}}}dy \leq \f{C}{(1+|x|)^{\al-\f{1}{2}}}. 
\end{equation}
	Thus, going to the integral equation, we can enter the new improved estimates, for $u'$ in \eqref{220},  and for $1-u$ in \eqref{230}, to obtain 
	$$
	u'(x)\leq C \int_{\rone} \f{1}{(1+|x-y|)^{1+\al}}  \f{1}{(1+|y|)^{2\al}}dy\leq \f{C}{(1+|x|)^{1+\al}},
	$$
	where in the last step, we used \eqref{250}. This is   \eqref{b:21}. Integrating, as before, but with the improved bound \eqref{b:21}, we obtain 
	$$
	1-u(x)=\int_x^\infty u'(y) dy\leq C \int_x^\infty \f{1}{(1+|y|)^{\al+1}}dy \leq \f{C}{(1+|x|)^{\al}},
	$$
	which is \eqref{b:18}. 
	
	We now show that these are sharp bounds. Specifically, we move onto the asymptotic formula \eqref{260}. We may write 
	\begin{eqnarray*}
		u'(x) &=& 3 \int_{\rone} K_\al(x-y) (1-u^2(y)) u'(y) dy=3 K_\al(x) \int_{\rone} (1-u^2(y)) u'(y) dy+\\
		&+& 3 \int_{\rone} (K_\al(x-y)-K_\al(x))  (1-u^2(y)) u'(y) dy
	\end{eqnarray*}
	For the first term, we have by \eqref{g:20} and $\int_{\rone} (1-u^2(y)) u'(y) dy=(u-\f{u^3}{3})|_{-\infty}^\infty=\f{4}{3}$, 
	\begin{eqnarray*}
		 3 K_\al(x) \int_{\rone} (1-u^2(y)) u'(y) dy &=&  3\left(c_\al \f{1}{|x|^{1+\al}}+O(|x|^{-3})\right)\int_{\rone} (1-u^2(y)) u'(y) dy=\\
		 &=& \f{4c_\al}{|x|^{1+\al}}+O(|x|^{-3}).
	\end{eqnarray*}
where $c_\al$ is the constant in the asymptotic \eqref{g:20}. 

We will now show that the second term in the formula for $u'$ is also an error term. Indeed, we split the integration there in two regions: inside of the interval  $|y|<\f{|x|}{2}$ and outside of it $|y|\geq \f{|x|}{2}$. For the first region, using the estimates \eqref{g:45}, \eqref{b:18}, \eqref{b:21}, 
	\begin{eqnarray*}
& & 	\int_{|y|<\f{|x|}{2}} |K_\al(x-y)-K_\al(x)|  (1-u^2(y)) u'(y) dy   \\
& &\leq  \int_0^1 \int_{|y|<\f{|x|}{2}} 
	|K_\al'(x-\tau y)| |y| (1-u^2(y)) u'(y) dy d\tau \\
	& &\leq  C |x|^{-3} \int_\rone 
 |y| (1-u^2(y)) u'(y) dy= C |x|^{-3} \int_\rone  \f{|y|}{(1+|y|)^{2\al+1}} dy\leq C |x|^{-3}.
\end{eqnarray*}
For the other region, we have 
\begin{eqnarray*}
	  	\int_{|y|\geq \f{|x|}{2}} (K_\al(x-y)+K_\al(x)) (1-u^2(y)) u'(y) dy  &\leq &    C \int_{|y|\geq \f{|x|}{2}} \left(\f{K_\al(x-y)}{(1+|x|)^{2\al+1}}+ \f{|x|^{-3}}{(1+|y|)^{2\al+1}} \right)) dy\\
	& & \leq \f{C}{(1+|x|)^{1+2\al}}.
\end{eqnarray*}
Therefore, we conclude that 
$$
u'(x)=\f{4c_\al}{|x|^{1+\al}}+O(|x|^{-3}), |x|>>1,
$$
which is \eqref{260}. The formula \eqref{270} is obtained by integrating the exact asymptotic \eqref{260}, for $x>>1$, 
$$
u(x)=1-\int_x^\infty u'(y) dy= 1-\int_x^\infty \left(\f{4c_\al}{y^{1+\al}}+O(y^{-3})\right)dy= 1- \f{4c_\al}{\al} x^{-\al} +O(x^{-2}). 
$$


\end{proof}

\section{Asymptotic stability of the kinks: Proof of Theorem \ref{theo:20}} 
Let us start by reviewing the spectral information and the appropriate estimates for the linearized operator $\cl$. 
\subsection{Preparatory steps}

Recall that the operator $\cl$ is non-negative, with a simple and isolated eigenvalue at zero, $Ker[\cl]=span[\phi']$. Introduce the self-adjoint projection away from the kernel,  $P_{\{\phi'\}^\perp}$, defined via 
$$
P_{\{\phi'\}^\perp}=f- \|\phi'\|^{-2} \dpr{f}{\phi'}\phi'.
$$
One may define a self-adjoint restriction of $\cl$ away from the kernel, 
$$
\cl_0:=P_{\{\phi'\}^\perp} \cl P_{\{\phi'\}^\perp},\ \ \  \cl_0^*=\cl_0, 
$$ 
Due to \eqref{340}, we have that $\cl_0\geq \ka>0$. In addition, by the spectral theorem, the $C_0$ semigroup generated by $-\cl_0$ obeys the bound 
\begin{equation}
	\label{350} 
	\|e^{-t \cl_0} g\|_{L^2}\leq e^{- t \ka} \|g\|_{L^2}.
\end{equation}
We now use the non-negativity of $\cl$ to deduce an equivalence of the standard Sobolev spaces and the one provided by powers of $\cl$. To start with, the operator $(1+\cl)^\ga, \Re \ga>0$ is standard to define through the functional calculus for positive operators, with $D((1+\cl)^\ga))=H^{\al \ga}$. Since $(1+\cl)^\ga: H^{\al \ga}\to L^2$ is invertible, it is clear that 
$$
	\|g\|_{H^{\al \ga}}\sim \|(1+\cl)^\ga g\|_{L^2}.
$$
In fact, for the operator $\cl_0$, which is already invertible on $\{\phi'\}^\perp$, we have 
\begin{equation}
	\label{500} 
	\|g\|_{H^{\al \ga}}\sim \|\cl_0^\ga   g\|_{L^2}.
\end{equation}
for all functions $g:  P_{\{\phi'\}^\perp}  g=g$. So, one can establish the boundedness of $e^{-t \cl_0}$ between any two Sobolev spaces. More precisely, for any $s>0$, we claim that 
\begin{equation}
	\label{351} 
		\|e^{-t \cl_0} g\|_{H^s}\leq e^{- t \ka} \|g\|_{H^s}.
\end{equation}
Indeed, by the equivalence \eqref{500} and the commutation of any power of $\cl_0$ with the semigroup $e^{-t\cl_0}$, 
\begin{eqnarray*}
	\|e^{-t \cl_0} g\|_{H^s}\sim \|\cl_0^{\f{s}{\al}} e^{-t \cl_0} g\|_{L^2}=  \|e^{-t \cl_0} \cl_0^{\f{s}{\al}}  g\|_{L^2}\leq 
	e^{-t \ka} \| \cl_0^{\f{s}{\al}} g\|_{L^2}  \sim 	e^{-t \ka}  \| g\|_{H^s}
\end{eqnarray*}

Next, we state a standard lemma,   which guarantees a suitable decomposition of the solution $u$ of \eqref{10}. 
\begin{lemma}
	\label{le:45} 
	
Let   $T>0$ and $s\geq 0$.  Then, 	there exists $\eps_0>0$ and a constant $C$, so that for any 
$w\in C^1([0,T), H^s(\rone)): \|w\|_{C^1([0,T),H^s(\rone))}<\eps_0$, one can find uniquely   $v_w\in C^1([0,T), H^s)\cap \{\phi'\}^\perp$ and $\si_w\in C^1(\rone_+)$, so that 
\begin{eqnarray}
	\label{365} 
& & 	\phi(x)+w(t,x)=\phi(x+\si(t))+v(t,x) \\
	\label{370}
& & 	\|v\|_{C^1([0,T) H^s(\rone))}+\|\si\|_{C^1[0,T)}\leq C\eps_0
\end{eqnarray}
\end{lemma}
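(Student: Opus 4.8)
The plan is to apply the implicit function theorem to a map that encodes the orthogonality condition $v(t,\cdot)\perp\phi'$. For each fixed $t$, write $\Phi_\si(x):=\phi(x+\si)$ and define
$$
F:\{w\in H^s(\rone):\|w\|_{H^s}<\eps_0\}\times(-\de_0,\de_0)\to\rone,\qquad
F(w,\si):=\dpr{\phi+w-\Phi_\si}{\phi'}.
$$
At $w=0,\si=0$ one has $F(0,0)=0$, and $\p_\si F(0,0)=-\dpr{\p_\si\Phi_\si|_{\si=0}}{\phi'}=-\dpr{\phi''}{\phi'}$... which vanishes since $\phi'$ is even and $\phi''$ is odd. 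So the naive choice of functional is degenerate and must be replaced. The standard fix is to project the \emph{decomposition} rather than the raw difference: require $v=\phi+w-\Phi_\si$ to satisfy $\dpr{v}{\phi'}=0$, i.e. use $F(w,\si):=\dpr{\phi+w-\phi(\cdot+\si)}{\phi'}$, and note $\p_\si F(0,0)=-\dpr{\phi'(\cdot+\si)|_{\si=0}}{\phi'}=-\|\phi'\|^2\neq0$. (The earlier computation was with $\Phi_\si$ replaced by its $\si$-derivative; here the relevant derivative is of $\si\mapsto\phi(\cdot+\si)$, whose derivative at $0$ is $\phi'$, not $\phi''$.) This is nondegenerate, so the scalar implicit function theorem yields, for $\|w\|_{H^s}$ small, a unique $\si=\si(w)$ near $0$ solving $F(w,\si(w))=0$, depending $C^1$ (indeed as smoothly as $F$) on $w\in H^s$, with $\si(0)=0$ and the Lipschitz bound $|\si(w)|\leq C\|w\|_{H^s}$.

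Next I would promote this from a fixed-$t$ statement to the time-dependent one. Given $w\in C^1([0,T),H^s)$ with $\|w\|_{C^1([0,T),H^s)}<\eps_0$, set $\si(t):=\si(w(t,\cdot))$ using the map just constructed. Since $\si(\cdot)$ is a $C^1$ function on a neighborhood of $0$ in $H^s$ and $t\mapsto w(t,\cdot)\in H^s$ is $C^1$, the composition $\si\in C^1([0,T))$ by the chain rule; differentiating the identity $F(w(t),\si(t))=0$ in $t$ gives $\si'(t)=-\p_\si F^{-1}\,\p_w F[w_t(t)]$, which is controlled by $\|w_t(t)\|_{H^s}$, hence $\|\si\|_{C^1[0,T)}\leq C\eps_0$. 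Then define $v(t,x):=\phi(x)+w(t,x)-\phi(x+\si(t))$; by construction $\dpr{v(t,\cdot)}{\phi'}=0$, i.e. $v(t,\cdot)\in\{\phi'\}^\perp$, and $v\in C^1([0,T),H^s)$ because $\phi(\cdot+\si(t))$ is $C^1$ in $t$ with values in $H^s$ (here one uses that $\phi-\mathrm{sgn}$ and all its derivatives lie in $H^s$, which follows from the decay estimates \eqref{300} and $\phi'\in\cap_{s\geq0}\dot H^s$; translation is strongly continuous and differentiable on $H^s$ along the direction $\phi'$). Finally the bound \eqref{370}: $\|v\|_{C^1([0,T),H^s)}\leq\|w\|_{C^1([0,T),H^s)}+\|\phi(\cdot+\si)-\phi\|_{C^1([0,T),H^s)}\leq\eps_0+C\|\si\|_{C^1}\leq C\eps_0$, using that $\si\mapsto\phi(\cdot+\si)-\phi$ is Lipschitz from a neighborhood of $0$ in $\rone$ into $H^s$.

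For uniqueness: if $(v_1,\si_1)$ and $(v_2,\si_2)$ are two decompositions satisfying \eqref{365}--\eqref{370} with $\eps_0$ small, then $\phi(\cdot+\si_1)-\phi(\cdot+\si_2)=v_2-v_1\in\{\phi'\}^\perp$; pairing with $\phi'$ and using that $\si\mapsto\dpr{\phi(\cdot+\si)}{\phi'}$ has nonvanishing derivative $\|\phi'\|^2$ at $0$ (and the $\si_i$ are $O(\eps_0)$ small) forces $\si_1=\si_2$, hence $v_1=v_2$. This is essentially the injectivity half of the IFT and can be folded into the same application.

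The main obstacle is not the implicit function theorem itself — which is routine once the right functional is chosen — but verifying the mapping properties of the translation action on $H^s$ for the \emph{non-decaying} profile $\phi$: one must check that $\si\mapsto\phi(\cdot+\si)-\phi$ is a $C^1$ curve into $H^s(\rone)$ (not merely into $\dot H^s$ or $L^\infty$), which requires the quantitative tail information $\phi-\mathrm{sgn}\in H^s$ together with $\phi'\in\cap_{s\geq0}\dot H^s\cap L^2$ (both from Theorem \ref{theo:10}), and then bookkeeping the $C^1$-in-$t$ regularity through the composition. Everything else is a standard perturbative modulation argument.
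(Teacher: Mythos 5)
Your proof is correct, but it takes a genuinely different route from the paper's. The paper applies the implicit function theorem once, in the Banach space $C^1([0,T],X_0)\times C^1[0,T]$, to the map
$\mathbb{G}(w,v,\si)=\phi(\cdot+\si(t))-\phi(\cdot)+v-w$
encoding the full decomposition, with the key step being the invertibility of the linearization $(\tilde v,\tilde\si)\mapsto \phi'\,\tilde\si+\tilde v$, i.e.\ the orthogonal splitting $H^s=\mathrm{span}\{\phi'\}\oplus\{\phi'\}^\perp$. You instead apply the \emph{scalar} implicit function theorem, at each fixed $t$, to the single orthogonality equation $\dpr{\phi+w-\phi(\cdot+\si)}{\phi'}=0$, observe that its $\si$-derivative at the origin is $-\|\phi'\|^2\neq0$, then define $v$ explicitly and propagate $C^1$ regularity in $t$ via the chain rule. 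Both proofs hinge on the same nondegeneracy $\|\phi'\|^2\neq 0$; yours is a bit more elementary (a one-variable IFT rather than an IFT on a product of curve spaces) and exposes the mechanism more transparently, at the cost of having to do the time-regularity bookkeeping by hand, which the paper's formulation in $C^1_t$ spaces absorbs automatically.

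Two remarks on the exposition. First, your opening paragraph contains a false start that should be deleted: you compute $\p_\si\Phi_\si|_{\si=0}=\phi''$, which is simply a miscalculation (since $\Phi_\si(x)=\phi(x+\si)$, the derivative is $\phi'$), and your ``naive'' and ``fixed'' functionals $F$ are in fact identical; the corrected computation $\p_\si F(0,0)=-\|\phi'\|^2$ is the right one. Second, you correctly flag the subtle but necessary point that $\si\mapsto\phi(\cdot+\si)-\phi$ must be a $C^1$ curve into $H^s(\rone)$ — not obvious since $\phi\notin L^2$ — which uses the representation $\phi(\cdot+\si)-\phi=\si\int_0^1\phi'(\cdot+\tau\si)\,d\tau$ together with $\phi'\in\cap_{s\geq0}H^s$; the paper uses exactly this representation but does not dwell on it, so it is good that you called it out.
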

{\bf Remark:} The uniqueness of the decomposition is with respect to the smallness condition in \eqref{370}. 
We provide the relatively  standard proof of Lemma \ref{le:45} for completeness in the Appendix. 

\subsection{Setup for the proof}
We take initial data for \eqref{10} in the form $u_0=\phi+w_0$, where \\  $\|w_0\|_{H^\al(\rone)}<<1$. This means that we can setup a Cauchy problem for $w(t,x)=u(t,x)-\phi(x)$ in the form 
\begin{equation}
	\label{400} 
	w_t+D^\al w - w+  3\phi^2 w+3 \phi w^2+w^3=0, \ \ w(0,x)=w_0(x)
\end{equation}
We can rewrite it as an equivalent integral equation 
\begin{equation}
	\label{401} 
	w=e^{-t D^\al} w_0-\int_0^t e^{-(t-\tau) D^\al}[- w(\tau, \cdot)+  3\phi^2 w(\tau, \cdot)+3 \phi w^2(\tau, \cdot)+w^3(\tau, \cdot)] d\tau. 
\end{equation}
Standard estimates for the semigroup $e^{-t D^\al}$ show that there is a local solution in any $H^s, s\geq 0$ space (see for example the related \eqref{351}), local well-posedness for \eqref{400}, which guarantees 
$w\in C([0,T), H^s(\rone))$. Choose $s=\al$. From \eqref{400}, we can conclude that $w_t\in C^1([0,T), L^2(\rone))$, since 
$$
\|w_t\|_{L^2}\leq \|D^\al w\|_{L^2}+\|- w+  3\phi^2 w+3 \phi w^2+w^3\|_{L^2}\leq C \|w\|_{H^\al}(1+ \|w\|_{H^\al}^2),
$$
where we have used the Sobolev embedding $H^\al\hookrightarrow L^2\cap L^6(\rone)$. 

So, at least for a short time, 
$\|w(t, \cdot)\|_{H^\al(\rone)}$ will exist and it will be small, due to the fact that $\|w_0\|_{H^\alpha}$ is small. Also, $\|w_t\|_{L^2}$ is small as well. 
Thus, Lemma \ref{le:45} applies with $s=0$. 

Specifically, we may write the solution in the ansatz $y=\phi(x)+w(t,x)=\phi(x+\si(t))+v(t,x)$. In this new ansatz, we have the Cauchy problem, 
\begin{equation}
	\label{410} 
	 \left\{\begin{array}{l}
	 	v_t+D^\al v - v+  3\phi^2(x+\si(t)) v+3 \phi(x+\si(t)) v^2+v^3+\si'\phi'(x+\si(t))=0,\ \ v\perp \phi' \\
	  v(0,x)=w_0(x)=:v_0(x)\in H^\al(\rone)
	\end{array}
\right.
\end{equation}
We can further rewrite  as follows 
$$
		v_t+\cl v + 3(\phi^2(x+\si(t)) -\phi^2(x)) v+3 \phi(x+\si(t)) v^2+v^3+\si'(t) \phi'(x+\si(t))=0, 
$$
But, $\phi^2(x+\si(t)) -\phi^2(x)=
2 \si(t) \int_0^t \phi \phi'(x+\tau \si(t)) d\tau$. Altogether, 
\begin{equation}
	\label{420} 
		 \left\{\begin{array}{l}
	v_t+\cl v +\si'(t) \phi'(x+\si(t))+N=0, \ \ v\perp \phi'.\\ 
	 v(0,x)=v_0(x)
\end{array}
\right.
\end{equation}
where 
$$
N(t,x)=2 \si(t) v(x) \int_0^t \phi \phi'(x+\tau \si(t)) d\tau+3 \phi(x+\si(t)) v^2(x)+v^3(x).
$$
Thus, our task is to show persistence of regularity for  the Cauchy problem \eqref{420} with small data.  
\subsection{Persistence of  regularity}
We state the main result in the following Proposition. 
\begin{proposition}
	\label{prop:129}
	There exists $\eps_0>0$ and a constant $C$, so that whenever $\|v_0\|_{H^\al(\rone)}<\eps$, the Cauchy problem \eqref{420} is globally well-posed and 
	\begin{eqnarray}
		\label{450} 
		\|v(t,\cdot)\|_{H^\al(\rone)}\leq C \eps e^{-\ka t} \\
		\label{460} 
		|\si'(t)|\leq C \eps  e^{-2 \ka t}, |\si(t)|\leq C \eps.
	\end{eqnarray}
\end{proposition}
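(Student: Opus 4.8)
The proof is a continuity (bootstrap) argument anchored on the spectral gap estimate \eqref{351} for the restricted propagator $e^{-t\cl_0}$. Since $\al>1$, $H^\al(\rone)$ is an algebra, so with $\si$ slaved to $v$ as below the map $v\mapsto N$ is locally Lipschitz on $H^\al$; combined with \eqref{351} this yields, by a standard contraction argument, a unique maximal solution $(v,\si)\in C([0,T^*),H^\al)\times C^1([0,T^*))$ of \eqref{420}, with the blow-up alternative $T^*<\infty\Rightarrow\limsup_{t\to T^*}\|v(t)\|_{H^\al}=\infty$. Fix a large constant $A$ and let $[0,T)$ be the largest subinterval of $[0,T^*)$ on which both $\|v(t,\cdot)\|_{H^\al}\le A\eps\,e^{-\ka t}$ and $|\si(t)|\le A\eps$ hold. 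It suffices to improve both bounds to $A/2$ on $[0,T)$: by continuity this forces $T=T^*$, and then $\|v\|_{H^\al}\le A\eps$ on $[0,T^*)$ rules out blow-up, so $T^*=\infty$ and \eqref{450}--\eqref{460} follow.

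For the modulation parameter, differentiate the constraint $\dpr{v(t)}{\phi'}=0$ and insert \eqref{420}: the term $\dpr{\cl v}{\phi'}=\dpr{v}{\cl\phi'}$ vanishes by self-adjointness and $\cl\phi'=0$, leaving $\si'(t)=-\dpr{N(t,\cdot)}{\phi'}/\dpr{\phi'(\cdot+\si(t))}{\phi'}$. For $|\si|$ small the denominator equals $\|\phi'\|^2+O(\si^2)$, hence is bounded below; the structure of $N$ (one term of size $O(|\si|)\cdot v$, one $O(v^2)$, one $O(v^3)$), with $\phi,\phi'\in L^\infty$ and Cauchy--Schwarz, gives $|\si'(t)|\le C\big(|\si(t)|\,\|v(t)\|_{L^2}+\|v(t)\|_{L^2}^2\big)$. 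Feeding in the bootstrap bounds and running a Gronwall estimate on $t\mapsto|\si(t)|$, starting from $|\si(0)|\le C\eps$ given by Lemma \ref{le:45}, upgrades this to $|\si(t)|\le C\eps$ and then to $|\si'(t)|\le C\eps^2 e^{-2\ka t}$ with $C$ independent of $A$; integrating, $\si_\infty:=\lim_{t\to\infty}\si(t)$ exists and $|\si(t)-\si_\infty|\le C\eps^2 e^{-2\ka t}$.

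For $v$ itself, projecting \eqref{420} onto $\{\phi'\}^\perp$ and writing it in Duhamel form,
$$v(t)=e^{-t\cl_0}v_0-\int_0^t e^{-(t-\tau)\cl_0}P_{\{\phi'\}^\perp}\big[\si'(\tau)\phi'(\cdot+\si(\tau))+N(\tau,\cdot)\big]\,d\tau,$$
apply \eqref{351} with $s=\al$, use $\|\phi'(\cdot+\si)\|_{H^\al}=\|\phi'\|_{H^\al}$, the bound on $\si'$ above, and (by the algebra property) $\|N(\tau)\|_{H^\al}\le C\big(|\si(\tau)|\,\|v(\tau)\|_{H^\al}+\|v(\tau)\|_{H^\al}^2(1+\|v(\tau)\|_{H^\al})\big)$ to obtain
$$\|v(t)\|_{H^\al}\le \eps e^{-\ka t}+C\int_0^t e^{-(t-\tau)\ka}\big(\eps^2 e^{-2\ka\tau}+\eps\|v(\tau)\|_{H^\al}+\|v(\tau)\|_{H^\al}^2\big)\,d\tau.$$
A Gronwall/continuity argument with $A$ large and $\eps$ small closes the bootstrap and gives \eqref{450}. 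Finally, if $w_0$ is odd the odd subspace is invariant and $\phi'$ is even, so $\dpr{N}{\phi'}\equiv0$, whence $\si\equiv0$, which is the last assertion of Theorem \ref{theo:20}.

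The main obstacle is closing the $v$-bootstrap at the sharp rate $\ka$. The term $\eps\|v(\tau)\|_{H^\al}$ in the last display originates from the potential $3(\phi^2(\cdot+\si)-\phi^2)v$, which is linear in $v$ with only $O(|\si|)=O(\eps)$-small coefficient; after convolution with $e^{-(t-\tau)\ka}$ it produces a contribution of size $\eps\,t\,e^{-\ka t}$, marginally worse than $e^{-\ka t}$. This is handled either by absorbing that potential into the linear propagator — the operator $\cl+3(\phi^2(\cdot+\si(t))-\phi^2)$ retains a spectral gap $\ge\ka-C\eps$ over the relevant orthogonal complement, and one then tracks the $\eps$-dependence of the rate — or, more cheaply, by first establishing decay at an arbitrary rate $\ka'<\ka$ and bootstrapping. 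A secondary point is the compatibility of the modulation ODE with the orthogonality constraint and the $C^1$ regularity of $\si$, which rests on Lemma \ref{le:45} with $s=0$ together with the $H^\al$-regularity of $v$ from the local theory.
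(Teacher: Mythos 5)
Your proposal follows the same route as the paper: the Duhamel representation \eqref{480} for $v$, the modulation ODE \eqref{430} for $\si$, and an $H^\al$-bootstrap driven by the spectral-gap estimate \eqref{351}. Where you are genuinely more careful is in naming the real obstruction: the piece of $N$ coming from $3\bigl(\phi^2(\cdot+\si(t))-\phi^2\bigr)v$ contributes $O\bigl(|\si(\tau)|\,\|v(\tau)\|_{H^\al}\bigr)=O(\eps^2 e^{-\ka\tau})$ under the bootstrap hypotheses, and after the Duhamel convolution this produces $\eps^2\, t\, e^{-\ka t}$, a marginal loss that does not close the bootstrap at the sharp rate $\ka$. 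Your diagnosis in fact exposes a gap in the paper's own argument: the paper asserts $\|N(\tau)\|_{H^\al}\leq C\eps^2 e^{-2\ka\tau}$ (and, in the $\si'$-estimate, $|\si(\tau)|\,\|v(\tau)\|_{L^2}\lesssim\eps^2 e^{-3\ka\tau}$), which would require $|\si(\tau)|\lesssim\eps e^{-\ka\tau}$; but the bootstrap hypothesis \eqref{510} only gives $|\si(\tau)|\lesssim\eps$, and $\si(t)$ cannot decay since $\si(t)\to\si_\infty$, which is generically nonzero. A similar discrepancy appears in your own conclusion $|\si'(t)|\leq C\eps^2 e^{-2\ka t}$: the very $\si v$-term you flag only yields $\eps^2 e^{-\ka t}$ in the modulation equation as written, so this exponent is not justified either.

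Your two remedies, absorbing the slowly-varying potential into the propagator (gap $\ka-C\eps$) or first proving decay at a rate $\ka'<\ka$ and iterating, are standard and correct as far as they go, but, as you acknowledge, they deliver a rate strictly below $\ka$, not the sharp $e^{-\ka t}$ and $e^{-2\ka t}$ asserted in \eqref{450}--\eqref{460}. The device that does recover the sharp rates is the passage to the moving frame $y=x+\si(t)$, $\tilde v(t,y):=v(t,y-\si(t))$. This removes the potential $3\bigl(\phi^2(\cdot+\si)-\phi^2\bigr)$ entirely, and the equation becomes $\tilde v_t+\cl\tilde v-\si'(t)\p_y\tilde v+3\phi\,\tilde v^2+\tilde v^3+\si'(t)\phi'=0$ with $\tilde v\perp\phi'$. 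Pairing with $\phi'$, the only linear-in-$\tilde v$ term is $\si'\dpr{\tilde v}{\phi''}$, which is absorbed into the denominator as an $O(\eps)$ correction to $\|\phi'\|^2$; the genuine source for $\si'$ is then quadratic in $\tilde v$, giving an honest $|\si'(t)|\lesssim\eps^2 e^{-2\ka t}$. In the $\tilde v$-equation the new term $\si'\p_y\tilde v$ costs one derivative, but it carries the fast weight $\si'$ and, more importantly, the loss is compensated by the parabolic smoothing of $e^{-(t-\tau)\cl_0}$, which gains $\al>1$ derivatives at the price of an integrable singularity $(t-\tau)^{-1/\al}$. This frame change (or an equivalent device) is the missing ingredient that neither your proposal nor the paper's proof makes explicit; without it, one obtains decay at rate $\ka-O(\eps)$ rather than at $\ka$.
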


\begin{proof}
We decompose  \eqref{420} along the directions $\{\phi'\}^\perp$ and along $\phi'$. Taking the inner product of \eqref{420} with $\phi'$ yields a non-linear equation for $\si'$,  
	\begin{equation}
		\label{s:10} 
	-\dpr{N}{\phi'} = 	\si'(t) \dpr{\phi'(\cdot)}{\phi'(\cdot+\si(t))}=	\si'(t) \left(\|\phi'\|^2+ \dpr{\phi'(\cdot)}{\phi'(\cdot+\si(t))-\phi'(\cdot)} \right)
\end{equation}
 Note that 
 $$
 |\dpr{\phi'(\cdot)}{\phi'(\cdot+\si(t))-\phi'(\cdot)}|\leq |\si(t)| \|\phi'\|_{L^1} \|\phi''\|_{L^\infty}.
$$
where $\si(t)$  is expected to be small along the evolution. Under the appropriate smallness assumptions on $\si(t)$ then, 
$$
\|\phi'\|^2+ \dpr{\phi'(\cdot)}{\phi'(\cdot+\si(t))-\phi'(\cdot)} \geq \f{\|\phi'\|^2}{2}, 
$$
 and so 
one can properly solve \eqref{s:10}, 
	\begin{equation}
		\label{430} 
 	\si'(t) = \f{-\dpr{N(t, \cdot)}{\phi'}}{\|\phi'\|^2+  \dpr{\phi'(\cdot)}{\phi'(\cdot+\si(t))-\phi'(\cdot)}  }=:\La_\si[v,\si]
	\end{equation}
Next, 	as $P_{\{\phi'\}^\perp}$  is a projection, projecting perpendicular to an eigenfunction, it commutes with $\cl$, so $P_{\{\phi'\}^\perp} \cl=P_{\{\phi'\}^\perp} \cl P_{\{\phi'\}^\perp}=\cl_0$, we have 
\begin{equation}
	\label{470} 
	v_t+\cl_0 v +\si'(t) P_{\{\phi'\}^\perp}[\phi'(\cdot+\si(t))]+P_{\{\phi'\}^\perp}[N]=0.
\end{equation}
	Applying the Duhamel's formula for \eqref{470}, we arrive at the corresponding  integral equation, 
	\begin{equation}
		\label{480} 
		v=e^{-t \cl_0} v_0 +\int_0^t e^{-(t-s) \cl_0}\left[ \si'(s) P_{\{\phi'\}^\perp}[\phi'(\cdot+\si(s))]+P_{\{\phi'\}^\perp}[N]\right] ds=:\La_v[v,\si]
	\end{equation}

	We now show the required persistence of regularity properties. Specifically, we claim that both $v_0$ and $\si'(t)$  will remain small  and exhibit exponential decay $e^{-\ka t}$, while  $\si(t)$ will just remain small (without the exponential decay of course), under \eqref{430} and \eqref{480}.  
	
	Specifically, assume that  for some small enough  $\eps$, we have 
	\begin{equation}
		\label{510} 
			\|v(t, \cdot)\|_{L^2}\leq \eps e^{-\ka t}, 
		|\si'(t)|\leq C \eps e^{-2 \ka t}, |\si(t)|<C \eps
	\end{equation}
	for all $t\in (0,T)$. We will show that the estimates \eqref{510} persist going forward under the time evolution.
	
	 To this end, note that, as we pointed out above, the denominator of \eqref{430} is at least $\f{\|\phi\|^2}{2}$, 
	 so that 
	\begin{eqnarray*}
		|\La_\si[v, \si](t)| &\leq &  \f{2}{\|\phi'\|^2}|\dpr{N}{\phi'}|\leq \f{2}{\|\phi'\|}\|N\|_{L^2} \leq C (|\si(t)| \|v\|_{L^2} \|\phi\phi'\|_{L^\infty}+\|\phi\|_{L^\infty} \|v\|_{L^4}^2+ \|v\|_{L^3}^3)\\
		&\leq & C(\eps^2  e^{-3\ka t} + \eps^2 e^{-2\ka t}+\eps^3 e^{-3\ka t}).
	\end{eqnarray*}
where we have used the {\it a priori} bounds \eqref{510} as well as the Sobolev embedding  $H^s(\rone)\hookrightarrow L^4(\rone), L^3(\rone)$. Clearly, such an expression on the right guarantees the propagation of the bound. 

For $v$, we have by taking $H^\al(\rone)$ norms in \eqref{480}, 
	\begin{eqnarray*}
\|\La_v(v, \si)\|_{H^\al(\rone)}& \leq &  \|e^{-t \cl_0} v_0\|_{H^\al(\rone)}+\int_0^t |\si'(\tau)| \|e^{-(t-\tau) \cl_0}  P_{\{\phi'\}^\perp}[\phi'(\cdot+\si(\tau))]\|_{H^\al} d\tau +\\
&+& \int_0^t \| e^{-(t-\tau) \cl_0} P_{\{\phi'\}^\perp}[N(\tau, \cdot)]\|_{H^\al} d\tau
\end{eqnarray*}
{\bf Estimate on initial data:} \\ 
	By \eqref{351}, with $s=\al$, 
	$$
	 \|e^{-t \cl_0} v_0\|_{H^\al(\rone)}\leq   C_0 e^{- \ka t}  \|v_0\|_{H^\al(\rone)} 
	$$
	{\bf Estimate on the term $ P_{\{\phi'\}^\perp}[\phi'(\cdot+\si(\tau))]$:}
	$$
	\|e^{-(t-\tau) \cl_0}  P_{\{\phi'\}^\perp}[\phi'(\cdot+\si(\tau))]\|_{H^\al} \ 
	 \leq C e^{-\ka (t-\tau)} \|P_{\{\phi'\}^\perp} [\phi'(\cdot+\si(\tau))]\|_{H^\al}.
$$
However, 
$$
	 P_{\{\phi'\}^\perp} [\phi'(\cdot+\si(\tau))] (x)= \phi'(x+\si(t)) - \phi'(x) - \|\phi'\|^{-2}\dpr{\phi'(\cdot)}{\phi'(\cdot+\si(\tau))-\phi'(\cdot)},
$$
whence 
$$
\|P_{\{\phi'\}^\perp} [\phi'(\cdot+\si(\tau))]\|_{H^\al}\leq C \|\phi'(\cdot+\si(t))-\phi'(\cdot)\|_{H^\al}\leq C |\si(\tau)|\leq C \eps.
$$
		{\bf Estimate on the nonlinear term $P_{\{\phi'\}^\perp}[N]$:}
		We have by the estimates on the semigroup in Sobolev spaces, \eqref{351} and the product estimates in $H^\al$, \eqref{li:10}, 
		\begin{eqnarray*}
			\| e^{-(t-\tau) \cl_0} P_{\{\phi'\}^\perp}[N(\tau, \cdot)]\|_{H^\al} &\leq & 	C e^{-\ka(t-\tau)} 
			\|N(\tau, \cdot)\|_{H^\al}\\
			&\leq &  C(|\si(\tau)| \|v(\tau)\|_{H^\al} + \|v(\tau)\|_{H^\al}^2+\|v(\tau)\|_{H^\al}^3)\leq \\
			&\leq & C\eps^2 e^{-2\ka\tau}.
		\end{eqnarray*}
	Collecting all of these entries in the estimate for $	\|\La_v[v, \si](t)\|_{H^\al} $ allows us to conclude
	\begin{eqnarray*}
		\|\La_v[v, \si](t)\|_{H^\al} \leq C e^{-\ka t} \|v_0\|_{H^\al}+ 
		\int_0^t e^{-\ka(t-\tau)} \eps^2 e^{-2\ka \tau}   d\tau \leq C_0 \eps e^{-\ka t} + C\eps^2  e^{-\ka t}. 
	\end{eqnarray*} 
	This shows that for small enough $\eps$, we can ensure that 
	$$
	\|v(t, \cdot)\|_{H^\al(\rone)}\leq 2C_0 \eps e^{-\ka t}, \ \ |\si'(t)| \leq C\eps e^{-2\ka t},|\si(t)| \leq C\eps. 
	$$

\end{proof}


\section{Kinks for the fractional wave equation: Proof of Theorem \ref{theo:30}} 
We start our considerations by indicating the main steps in the existence proof for \eqref{35}. Fix $c: c\in (-1,1)$. 
\subsection{Existence of the kinks}
Most of the existence results in this section follow verbatim the proofs in the case of \eqref{30}. Indeed, we set up our variational problem, similar to \eqref{48}, 
 $$
 \f{1}{2} \left((1-c^2) \|u'\|^2+ \|D^{\al/2} u\|^2\right) + \f{1}{4} \int_{\rone} (1-u^2)^2 dx\to \min.
 $$
Then, we pivot to the following version of the approximate problem \eqref{50}, 
 $$
  \f{1}{2} \left((1-c^2) \|u'\|^2+ \|D^{\al/2} u\|^2\right) + \epsilon \int_\rone (u-W)^2 \ln(e+x^2) dx + \f{1}{4} \int_{\rone} (u^2-1)^2 dx\to \min.
 $$
The arguments from here are identical - one proves (with the additional harmless term $-(1-c^2)\p_{xx}$) a version of Proposition \ref{prop:10}, which establishes approximate kink solutions. The appropriate version of Proposition \ref{prop:20}, which proceeds in an identical manner, establishes  uniform estimates for $q _\epsilon$ in $H^{\al/2}$ and $v_\eps$ in $L^\infty\cap \dot{H}^{\al/2}$.  

Then, a version of Proposition \ref{prop:40} establishes the validity of the limit $\lim_{\epsilon\to 0+} v_\eps=v$, together with the smoothness and decay property  $u\in L^\infty(\rone)\cap \dot{H}^{\f{\al}{2}}(\rone)\cap_{l=1}^\infty \dot{H}^l(\rone), u'\in \cap_{l=0}^\infty H^l(\rone)$,  together with the non-negativity of the linearized operator $\cl=-(1-c^2)\p_{xx}+D^\al +2 - 3(1-u^2)\geq 0$, with simple and single eigenvalue at zero, spanned by $u'$. Note that in this argument, one needs  that $\cl$ has the Perron-Frobenius property, which is equivalent, \cite{FL},  to the positivity of the kernel of the semigroup $e^{-t \cl}$. Standard arguments, see for example \cite{FL}, reduce this to checking that $e^{-t(-(1-c^2)\p_{xx}+D^\al)}$ has positive kernel. But since $\p_{xx}$ commutes with $D^\al$, we have that 
$$
e^{-t(-(1-c^2)\p_{xx}+D^\al)}=e^{t(1-c^2)\p_{xx}} e^{-tD^\al}.
$$
 So, it follows that the kernel of $e^{-t(-(1-c^2)\p_{xx}+D^\al)}$ is positive as a convolution of two positive kernels: the standard heat kernel  and the fractional heat kernel (recall $\al<2$), see for example  \cite{FL} for this last fact. 
 
 Next, we address the decay rates. 
 \subsection{Decay rates}
The proof of the decay rates proceeds identically to Section \ref{sec:4}, once we have the appropriate version of Lemma \ref{g:10}. 
\begin{lemma}
	\label{g:11} 
	Let $\al\in (1,2)$. Then, the resolvent operator $(-(1-c^2)\p_{xx}+D^\al+2)^{-1}$  is represented as 
	$$
	(-(1-c^2)\p_{xx}+D^\al+2)^{-1} F(x)=\int_{-\infty}^\infty \tilde{K}_\al(x-y) F(y) dy, 
	$$
	with a positive kernel $\tilde{K}_\al$ with 
	\begin{eqnarray}
		\label{g:251} 
		& & 0<\tilde{K}_\al(x)\leq \f{C_\al}{(1+|x|)^{1+\al}}, \\
		\label{g:201} 
	& & 	\tilde{K}_\al(x)= \f{2^{\al-4} \al(\al-1) \Ga\left(\f{\al-1}{2}\right)}{\sqrt{\pi} \Ga\left(\f{2-\al}{2}\right)}     |x|^{-1-\al}+O(|x|^{-3}), \ \  |x|>1
	\end{eqnarray}
	Also 
	\begin{equation}
		\label{g:451} 
		|\tilde{K}_\al'(x)|\leq C |x|^{-3}, |x|>>1.
	\end{equation}
\end{lemma}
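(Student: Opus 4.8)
\textbf{Proof plan for Lemma \ref{g:11}.} The strategy mirrors the proof of Lemma \ref{g:10}, the only new feature being the extra second-order term $-(1-c^2)\p_{xx}$ in the symbol. The plan is to write the kernel via its Fourier representation
\[
\tilde K_\al(x)=\int_{\rone} \f{e^{2\pi i x\xi}}{(1-c^2)(2\pi\xi)^2+(2\pi|\xi|)^\al+2}\,d\xi,
\]
and then split the symbol's reciprocal into a principal homogeneous piece plus a more regular remainder. Concretely, since $1<\al<2$, near $\xi=0$ the dominant non-analytic contribution to $\f{1}{(1-c^2)(2\pi\xi)^2+(2\pi|\xi|)^\al+2}$ comes from expanding
\[
\f{1}{2+(2\pi|\xi|)^\al+(1-c^2)(2\pi\xi)^2}=\f12-\f14\big[(2\pi|\xi|)^\al+(1-c^2)(2\pi\xi)^2\big]+O(|\xi|^{2\al}),
\]
so that the leading singular term is $-\tfrac14(2\pi|\xi|)^\al$, exactly as in the case $c=0$; the additional $(1-c^2)(2\pi\xi)^2$ term is smooth and, being a multiple of $\xi^2$, contributes only a distributional derivative of $\delta$ (i.e.\ nothing to the decay at $x\neq 0$). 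Taking inverse Fourier transforms of the homogeneous piece using Lemma \ref{le:126} (the formula $\wh u_z=u_{-1-z}$ with $z=\al$, which produces the $|x|^{-1-\al}$ profile with constant $\f{2^{\al-4}\al(\al-1)\Ga(\f{\al-1}{2})}{\sqrt\pi\,\Ga(\f{2-\al}{2})}$) yields \eqref{g:201}; the faster-decaying remainder, coming from the $O(|\xi|^{2\al})$ and the smooth high-frequency cutoff, is $O(|x|^{-3})$ (in fact $O(|x|^{-2-\al})$), giving \eqref{g:251} and \eqref{g:451} after differentiating in $x$.

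The positivity statement $\tilde K_\al>0$ is obtained not from the oscillatory integral but from the semigroup subordination identity
\[
(-(1-c^2)\p_{xx}+D^\al+2)^{-1}=\int_0^\infty e^{-2t}\,e^{-t(-(1-c^2)\p_{xx}+D^\al)}\,dt,
\]
together with the factorization $e^{-t(-(1-c^2)\p_{xx}+D^\al)}=e^{t(1-c^2)\p_{xx}}e^{-tD^\al}$ (valid because $\p_{xx}$ and $D^\al$ commute as Fourier multipliers), already invoked in the existence part of this section. Each of $e^{t(1-c^2)\p_{xx}}$ (the Gaussian heat kernel) and $e^{-tD^\al}$ (the symmetric $\al$-stable kernel, $\al<2$) has a strictly positive convolution kernel, hence so does their composition and so does the $t$-integral; this gives $\tilde K_\al>0$ on all of $\rone$.

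The step I expect to require the most care is the remainder estimate: one must justify that, after subtracting the explicit homogeneous leading term, the difference
\[
\tilde K_\al(x)-\f{2^{\al-4}\al(\al-1)\Ga(\f{\al-1}{2})}{\sqrt\pi\,\Ga(\f{2-\al}{2})}|x|^{-1-\al}
\]
decays like $|x|^{-3}$. This is done exactly as in Lemma \ref{g:10}: write the symbol's reciprocal as $\tfrac12-\tfrac14(2\pi|\xi|)^\al\chi(\xi)+g(\xi)$ where $\chi$ is the smooth cutoff from Section \ref{sec:2} and $g$ is the remainder; the term $g$ is $C^2$ away from $0$, vanishes to order $|\xi|^{\min(2,2\al)}=|\xi|^2$ at the origin and is Schwartz-class at infinity, so integration by parts twice in $\xi$ produces the $|x|^{-2}$ bound and a more careful accounting (using that $g$ actually vanishes like $|\xi|^{\al}\cdot|\xi|^{2-\al}$ from the cross terms, or like $|\xi|^{2}$ from the quadratic term whose inverse transform is a derivative of $\delta$ supported at the origin) upgrades this to $|x|^{-3}$; the presence of the harmless quadratic term $(1-c^2)(2\pi\xi)^2$ does not affect this analysis since it only shifts $g$ by a polynomial, which contributes nothing at $x\ne 0$. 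The derivative bound \eqref{g:451} follows by the same argument applied to $x\tilde K_\al(x)$ or equivalently by differentiating the oscillatory integral representation. Since every estimate here is the $c$-uniform analogue of one already carried out for $\tilde K_\al\big|_{c=0}=K_\al$, we only indicate the modifications and refer to the proof of Lemma \ref{g:10} in the Appendix for the routine details.
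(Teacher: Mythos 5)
Your proposal is correct in its conclusions and in the positivity argument (which matches the paper exactly: semigroup subordination plus the factorization $e^{-t(-(1-c^2)\p_{xx}+D^\al)}=e^{t(1-c^2)\p_{xx}}e^{-tD^\al}$ into a Gaussian and an $\al$-stable kernel). For the asymptotics, however, you take a genuinely different route from the paper. You redo the low-frequency power-series analysis of Lemma~\ref{g:10} from scratch, treating the extra quadratic term $(1-c^2)(2\pi\xi)^2$ as an additional smooth perturbation inside the expansion $\f{1}{2+t}=\f12-\f{t}{4}+O(t^2)$ and then arguing that its inverse transform contributes only rapidly decaying terms. The paper instead never touches the expansion again: it writes
\[
K_\al(x)-\tilde K_\al(x)=4\pi^2(1-c^2)\int_{\rone}\f{\xi^2}{\bigl(2+4\pi^2(1-c^2)\xi^2+(2\pi)^\al|\xi|^\al\bigr)\bigl(2+(2\pi)^\al|\xi|^\al\bigr)}\cos(2\pi\xi x)\,d\xi,
\]
observes that the factor $\xi^2$ in the numerator makes the integrand regular enough to integrate by parts four times (the fourth derivative is $O(\xi^{-(2-\al)}(1+\xi^{2+\al})^{-1})$, hence integrable), and concludes $|K_\al(x)-\tilde K_\al(x)|\le C(1+|x|)^{-4}$. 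Since $|x|^{-4}$ is smaller than both the leading term $|x|^{-1-\al}$ and the claimed error $|x|^{-3}$, the entire asymptotic statement of Lemma~\ref{g:10} transfers verbatim to $\tilde K_\al$. That comparison argument is shorter, avoids re-deriving the Gamma-function constant, and sidesteps the somewhat delicate bookkeeping of the cross-terms $|\xi|^{\al+2}$, $\xi^4$, $|\xi|^{2\al}$ that your expansion generates. Your version is more self-contained but duplicates work; one small imprecision to flag is that the phrase about the $\xi^2$ term ``contributing only a derivative of $\delta$, i.e. nothing at $x\neq 0$'' is only literally true before you insert the cutoff $\chi(16\xi)$---after the cutoff it contributes a Schwartz function, which is still harmless but not identically zero away from the origin.
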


\subsection{Spectral stability of the kinks $\phi_c, c\in (-1,1)$}
In this section, we need to show that the eigenvalue problem \eqref{lin:30} has only non-trivial solutions if $\Re \la=0$. It is worth noting that the said spectral stability follows from a general instability index theory, once we take into account the positivity of the linearized operator $\ch:=\begin{pmatrix}
	\cl & 0 \\ 0 & Id
\end{pmatrix}$.  

Nevertheless, we provide an instructive direct proof of this fact. 
Suppose that \eqref{lin:30} has  a solution $\vec{f}\neq 0$  for some $\la:\Re\la\neq 0$. 
	$$
	\begin{pmatrix}
	0 & Id \\  -Id & 2c\p_x 
\end{pmatrix} \begin{pmatrix}
	\cl & 0 \\ 0 & Id
\end{pmatrix} \vec{f}=\la \vec{f},
$$
and apply the operator $\cj:=\begin{pmatrix}
	2c\p_x & -Id \\  Id & 0
\end{pmatrix}=\begin{pmatrix}
0 & Id \\  -Id & 2c\p_x 
\end{pmatrix}^{-1}$ to both sides. We get 
\begin{equation}
	\label{800}
	\begin{pmatrix}
		\cl & 0 \\ 0 & Id
	\end{pmatrix} \vec{f} = \la \cj \vec{f}.
\end{equation}
 Taking dot product with $\vec{f}$ yields 
$$
 \dpr{\ch \vec{f}}{\vec{f}}=\la \dpr{\cj \vec{f}}{\vec{f}}.
$$
Noting that $\ch^*=\ch$, while $\cj^*=-\cj$, we obtain 
\begin{eqnarray*}
& & 	\dpr{\ch \vec{f}}{\vec{f}}=\dpr{ \vec{f}}{\ch \vec{f}}=\overline{\dpr{\ch  \vec{f}}{\vec{f}}}\\
& & 	\dpr{\cj \vec{f}}{\vec{f}}=-\dpr{ \vec{f}}{\cj \vec{f}}=-\overline{\dpr{\cj \vec{f}}{\vec{f}}}
\end{eqnarray*}
whence $\dpr{\cj \vec{f}}{\vec{f}}$ is purely imaginary, whereas $\dpr{\ch \vec{f}}{\vec{f}}$ is real. We claim that $\dpr{\cj \vec{f}}{\vec{f}}\neq 0$. Supposing for a contradiction that 
$ \dpr{\cj \vec{f}}{\vec{f}}=0$.  It follows that 
$$
0=\dpr{\ch \vec{f}}{\vec{f}}=\dpr{\cl f_1}{f_1}+\|f_2\|^2,
$$
which since $\cl\geq 0$ implies that $f_2=0$. Going back to the second equation in \eqref{800}, this means that $0=\la f_1$, so $f_1=0$, a trivial solution, thus a contradiction. 
This means that $\dpr{\cj \vec{f}}{\vec{f}}\neq 0$. But then 
$$
\la=\f{\dpr{\ch \vec{f}}{\vec{f}}}{\dpr{\cj \vec{f}}{\vec{f}}}
$$
is purely imaginary as ratio of real and purely imaginary. A contradiction, which implies that the spectrum is only purely imaginary, i.e. 
$$
\si(\begin{pmatrix}
	0 & Id \\  -Id & 2c\p_x 
\end{pmatrix} \begin{pmatrix}
	\cl & 0 \\ 0 & Id
\end{pmatrix})\subset \{\la\in {\mathbb C}:  \Re\la=0\}. 
$$

\section{Existence of kinks and asymptotic in the super-Laplacian case: Proof of Theorem \ref{theo:40}}
The existence proof for the case $\al\in (1,2)$ goes without any difficulties for the case $\al\in (2,4)$. Specifically, Proposition \ref{prop:10} provides solutions to the approximate minimization problem, Proposition \ref{prop:20} provides uniform {\it a priori} estimates in $0<\epsilon<1$, while Proposition \ref{prop:40} justifies  the limits $u_\eps$ toward $u$, $u\in L^\infty\cap H^\infty(\rone), u'\in H^\infty(\rone)$, the limit $\lim_{x\to \pm \infty} u(x)=\pm 1$ together with the positivity of the linearized operator $\cl$ and , along with $\cl[u']=0$. 
\subsection{Sharp asymptotics of the Green's function}

Regarding the asymptotics of $u(x), |x|>>1$, the key is again at the kernel of the resolvent $(D^\al+2)^{-1}$. This kernel can no longer be expected to be positive (and in fact, it is not). As we have learned, the rate of decay of the derivative $u'$ is inherited from the decay of the kernel of $(D^\al+2)^{-1}$. We have the following version of Lemma \ref{g:10}. 
\begin{lemma}
	\label{g:12} 
	Let $\al\in (2,4)$. Then, the resolvent operator $(D^\al+2)^{-1}$  is represented as 
	$$
	(D^\al+2)^{-1} F(x)=\int_{-\infty}^\infty K_\al(x-y) F(y) dy, 
	$$
	for which we have  that 
	\begin{equation}
		\label{fr:50} 
		\|K_\al(x)\|_{L^\infty}\leq C<\infty,
	\end{equation}
and the asymptotic formula, 
	\begin{equation}
		\label{fr:60} 
		K_\al(x)= -\f{2^{\al-5}\al(\al-1)(\al-2)}{\sqrt{\pi}} 
		\f{\Ga\left(\f{\al-1}{2}\right)}{\Ga\left(\f{4-\al}{2}\right)}|x|^{-1-\al}+O(|x|^{-5}). 
	\end{equation}
	Also 
	\begin{equation}
		\label{fr:70} 
		|K_\al'(x)|\leq C |x|^{-5}, |x|>>1.
	\end{equation}
    For $\al=2, 4$, we have the precise formulas, using Mathematica, 
    \begin{eqnarray}
        \label{p:10}
        K_2(x) &=& \f{e^{-\sqrt{2}|x|}}{2\sqrt{2}}\\
        \label{p:11}
        K_4(x) &=& \sqrt[4]{2} \f{e^{-\f{|x|}{\sqrt[4]{2}}}}{4 }\sin\left(\f{|x|}{\sqrt[4]{2}}+\f{\pi}{4}\right).
    \end{eqnarray}
\end{lemma}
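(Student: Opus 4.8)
Write $K_\al=\mathcal{F}^{-1}[m_\al]$ with the radial multiplier $m_\al(\xi):=\bigl((2\pi|\xi|)^\al+2\bigr)^{-1}$. Since $\al>1$ we have $m_\al\in L^1(\rone)$, so \eqref{fr:50} is immediate from $\|K_\al\|_{L^\infty}\le\|m_\al\|_{L^1}$; note also that, in contrast with the range $\al\in(1,2)$, $K_\al$ is no longer an average of (ordinary and fractional) heat kernels, hence not sign-definite — indeed \eqref{fr:60} will force $K_\al(x)<0$ for $|x|$ large. The whole point is that the large-$x$ behaviour of $K_\al$ is dictated by the unique singularity of $m_\al$, which sits at $\xi=0$, together with the smoothness and decay of $m_\al$ away from $0$.

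For the asymptotics I would localise that singularity. Expand $m_\al(\xi)=\tfrac12\sum_{k\ge0}(-1)^k2^{-k}(2\pi|\xi|)^{k\al}$ near the origin and split $m_\al(\xi)=\chi(\xi)\sum_{k=0}^{N}c_k|\xi|^{k\al}+R_\al(\xi)$, where $\chi$ is the fixed cutoff supported near $0$, $c_0=\tfrac12$, $c_1=-\tfrac{(2\pi)^\al}{4}$, and $N=N(\al)$ is taken large. Then $R_\al$ vanishes to order $(N+1)\al$ at $0$ and equals $m_\al$ for $|\xi|>2$, hence is $C^{M}$ with $\partial_\xi^{M}R_\al\in L^1(\rone)$ for large $M$ (near $0$ the top derivative is bounded since $(N+1)\al-M>-1$, near $\infty$ it is $O(|\xi|^{-\al-M})$), so $\mathcal{F}^{-1}R_\al(x)=O(|x|^{-5})$ after $M\ge5$ integrations by parts. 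The term $\mathcal{F}^{-1}[c_0\chi]$ is Schwartz, hence $O(|x|^{-\infty})$. For each $k\ge1$ I split $\chi|\xi|^{k\al}=|\xi|^{k\al}-(1-\chi)|\xi|^{k\al}$: the smooth tail $(1-\chi)|\xi|^{k\al}$ carries enough integrable derivatives that $\mathcal{F}^{-1}[(1-\chi)|\xi|^{k\al}]=O(|x|^{-5})$ (for $k=1$ this uses $5>\al+1$, valid since $\al<4$; for $k\ge2$ one integrates by parts more times), and $\mathcal{F}^{-1}[|\xi|^{k\al}]$ is a multiple of $|x|^{-1-k\al}$, which for $k\ge2$ is $O(|x|^{-5})$ because $1+2\al>5$. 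Thus the only contribution at order $|x|^{-1-\al}$ is $c_1\,\mathcal{F}^{-1}[|\xi|^\al]$.

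To evaluate $\mathcal{F}^{-1}[|\xi|^\al]$ I would invoke Lemma \ref{le:126}: since $|\xi|^\al$ is even, $\mathcal{F}^{-1}[|\xi|^\al]=\mathcal{F}[|\xi|^\al]$, and writing $|\xi|^\al=\tfrac{\Ga((\al+1)/2)}{\pi^{(\al+1)/2}}\,u_\al$ and using $\widehat{u}_z=u_{-1-z}$ gives $\mathcal{F}^{-1}[|\xi|^\al](x)=\tfrac{\Ga((\al+1)/2)}{\pi^{\al+1/2}\,\Ga(-\al/2)}\,|x|^{-1-\al}$ for $|x|>0$ (the regularisation at the origin being exactly what the $u_z$ calculus supplies). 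Multiplying by $c_1=-\tfrac{(2\pi)^\al}{4}$ produces the coefficient $-\tfrac{2^{\al-2}\Ga((\al+1)/2)}{\sqrt\pi\,\Ga(-\al/2)}$, and the identities $\Ga(\tfrac{\al+1}{2})=\tfrac{\al-1}{2}\Ga(\tfrac{\al-1}{2})$ and $\Ga(\tfrac{4-\al}{2})=\tfrac{\al(\al-2)}{4}\Ga(-\tfrac\al2)$ rewrite it as $-\tfrac{2^{\al-5}\al(\al-1)(\al-2)}{\sqrt\pi}\,\tfrac{\Ga((\al-1)/2)}{\Ga((4-\al)/2)}$, which is \eqref{fr:60}; for $\al\in(2,4)$ every Gamma factor and $\al(\al-1)(\al-2)$ is positive, so this coefficient is negative, as asserted.

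For \eqref{fr:70} I would run the identical scheme on $2\pi i\xi\,m_\al(\xi)=\pi i\xi-c\,\xi|\xi|^\al+\dots$: the polynomial-in-$\xi$ part contributes $O(|x|^{-\infty})$ after cutoff, the leading singular term $\xi|\xi|^\al$ is odd homogeneous of degree $\al+1$, so its inverse Fourier transform is a multiple of $sgn(x)|x|^{-2-\al}$, and all remaining terms decay faster, giving the claimed pointwise bound. Finally, at the endpoints $\al=2,4$ the multiplier is, after rescaling $\xi$, $(\xi^2+a^2)^{-1}$ respectively $(\xi^4+b^4)^{-1}$, whose inverse Fourier transforms are classical (partial fractions and residues), yielding \eqref{p:10}--\eqref{p:11}. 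The main obstacle is the bookkeeping in the two middle paragraphs: choosing $N$ large enough that both $R_\al$ and the tails $(1-\chi)|\xi|^{k\al}$ carry the needed number of integrable derivatives, and then pushing the Gamma-function identities through without error to land on the precise constant in \eqref{fr:60}.
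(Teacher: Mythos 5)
Your argument for \eqref{fr:50} and \eqref{fr:60} is correct and follows essentially the same route as the paper: localize at the singularity $\xi=0$, expand $(2+(2\pi|\xi|)^\al)^{-1}$ in powers of $|\xi|^\al$, identify $c_1|\xi|^\al$ as the only term contributing at order $|x|^{-1-\al}$, compute $\mathcal{F}^{-1}[|\xi|^\al]$ via the homogeneous-distribution formula $\hat u_z=u_{-1-z}$ of Lemma \ref{le:126}, and finish with the functional equation for $\Ga$ (your constant lands exactly on \eqref{fr:60}, and your sign discussion correctly uses that $\Ga(-\al/2)>0$ for $\al\in(2,4)$). The one genuine difference is bookkeeping: you truncate the power series at a finite $N$ and absorb the tail into a single remainder $R_\al$ controlled by $M\geq 5$ integrations by parts, whereas the paper keeps the full infinite series under the cutoff and bounds each term $E_k$ by five integrations by parts, tracking the $k^5$ growth of the constants so the series still converges. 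Your finite truncation avoids that summability issue and is arguably cleaner; just make sure the rapid decay of $\mathcal{F}^{-1}[(1-\chi)|\xi|^{k\al}]$ for $|x|>1$ is justified carefully (the integral is not absolutely convergent, so one should use the dyadic decomposition as in the paper's treatment of the $k=1$ tail, or an equivalent regularization, rather than a bare integration by parts).

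There is, however, an internal inconsistency in your treatment of \eqref{fr:70}. You correctly identify the leading singular contribution to $K_\al'$ as a nonzero multiple of $\mathrm{sgn}(x)|x|^{-2-\al}$, but for $\al\in(2,3)$ this decays strictly slower than $|x|^{-5}$, so it cannot ``give the claimed pointwise bound'': what your argument actually proves is $|K_\al'(x)|\leq C|x|^{-2-\al}$. (The paper's appendix is silent on \eqref{fr:70} in this range, and its own remark after Lemma \ref{g:10} acknowledges $O(|x|^{-2-\al})$ as the natural rate; that rate is also all that is needed in the error estimate of Proposition \ref{prop:55}, where one only requires the commutator term to be $o(|x|^{-1-\al})$.) You should therefore state and prove the bound $|K_\al'(x)|\leq C|x|^{-2-\al}$ and observe that it suffices downstream, rather than asserting \eqref{fr:70} as written. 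The remaining items are fine: the endpoint formulas \eqref{p:10}--\eqref{p:11} by partial fractions and residues are standard (the paper simply cites Mathematica).
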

We postpone the proof of Lemma \ref{g:12} for the Appendix. 
Some remarks are however in order: 
{\bf Remarks:} 
\begin{itemize}
    \item We note  that 
in \cite{LePel}, the authors established a related result for the kernel of the {\it periodic} Green's function. In it, they have conjectured that the kernel $K_\al$ of the Green's function on $\rone$, when $\al\in (2,4)$, is sign-changing for large $|x|$. The asymptotic formula \eqref{fr:60} establishes precisely this:  it shows that the kernel $K_\al$ is negative for large $|x|$, while  $K_\al(0)=\int (2+(2\pi|\xi|)^\al)^{-1} d\xi>0$. In fact, we also establish another observation made in \cite{LePel}, that the kernel $K_4$ vanishes at infinitely many points. This is also in line with numerical explorations,
such as those of~\cite{DeckerCNSNS2021}.
\item Another notable fact is the exponential decay at $\al=2,4$ (and in fact for all even integers $\al$), see \eqref{p:10}, \eqref{p:11}. One might wonder about the asymptotics in \eqref{fr:60} viz. a viz the \eqref{p:10}, \eqref{p:11}. One can see that the asymptotic formulas \eqref{fr:60} collapse as $\al\to 2+$ and $\al\to 4-$, as the coefficient in front of $|x|^{-1-\al}$ tends to zero. This allows for the exponential rates to take over at these particular values. 
\end{itemize}

 We are now ready for the proof of the asymptotics of the kink. 

\subsection{Sharp asymptotics for the kink solution}
As expected the proof follows along the ideas of Proposition \ref{prop:50}. The details are stated in the next Proposition. 
\begin{proposition}
	\label{prop:55} 
	 For the solution $u$ constructed in Theorem \ref{theo:40}, 
	  there is the sharp decay rate 
	\begin{equation}
		\label{c:182} 
		|u(x)-sgn(x)|\leq C (1+|x|)^{-\al}.
	\end{equation}
In addition, 
	\begin{equation}
		\label{c:21} 
		|u'(x)|< \f{C}{(1+|x|)^{1+\al}}
	\end{equation}
	Moreover,   
	\begin{eqnarray}
		\label{c:26} 
		u'(x) &=&-\f{2^{\al-3}\al(\al-1)(\al-2)}{\sqrt{\pi}} 
		\f{\Ga\left(\f{\al-1}{2}\right)}{\Ga\left(\f{4-\al}{2}\right)}|x|^{-1-\al}+O(|x|^{-5}), x>>1  \\
		\label{c:27} 
		u(x) &=&  1+ \f{2^{\al-3}(\al-1)(\al-2)}{\sqrt{\pi}} 
		\f{\Ga\left(\f{\al-1}{2}\right)}{\Ga\left(\f{4-\al}{2}\right)}|x|^{-\al}+O(|x|^{-4}), x>>1
	\end{eqnarray}
\end{proposition}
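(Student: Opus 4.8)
The plan is to transcribe the argument of Proposition~\ref{prop:50} almost verbatim, adjusting only for the two features that distinguish the super-Laplacian regime: the kernel $K_\al$ of $(D^\al+2)^{-1}$ is now sign-changing rather than positive, and the kink $u$ is no longer monotone, so the estimates cannot be phrased in terms of $u'$ itself but must use $|u'|$. The single input that replaces the positive decay bound \eqref{g:25} is the uniform estimate $|K_\al(x)|\le C_\al(1+|x|)^{-1-\al}$ valid on all of $\rone$; this follows at once by combining \eqref{fr:50} (boundedness near the origin) with \eqref{fr:60} (the $|x|^{-1-\al}$ asymptotic for $|x|>1$) from Lemma~\ref{g:12}, whose proof is deferred to the Appendix and which I will take for granted. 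I would first record, exactly as before, that differentiating \eqref{30} gives $\cl[u']=0$, hence the integral identity $u'(x)=3\int_{\rone}K_\al(x-y)(1-u^2(y))u'(y)\,dy$.

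Next I would run the decay iteration. Since $u\in H^\infty(\rone)$ and $\lim_{x\to+\infty}u(x)=1$, both $u'$ and $1-u^2$ tend to $0$ at $+\infty$ (the $-\infty$ side being symmetric). Fixing $\de>0$ and $k_0$ with $|1-u^2(y)|<\de$ for $y>2^{k_0}$, setting $a_k:=\max_{x\ge 2^k}|u'(x)|$, evaluating the identity at the maximizing point, estimating $|K_\al|\le C_\al(1+|\cdot|)^{-1-\al}$, splitting the $y$-integral at $x_k/2$, and using $|1-u^2|\le 2$ with Cauchy--Schwarz and $\|u'\|_{L^2}<\infty$ on the far part while $|1-u^2|<\de$ on the near part, I would reproduce the recursion $a_k\le C_\al\de\,a_{k-1}+C_\al 2^{-k(\al+1/2)}$ of \eqref{200}. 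Choosing $\de$ with $C_\al\de<\tfrac1{100}$ and iterating gives $|u'(x)|\le C(1+|x|)^{-\al-1/2}$; integrating (legitimate since $u(x)\to1$) gives $|1-u(x)|\le C(1+|x|)^{-\al+1/2}$; feeding both back into the identity and applying the convolution inequality \eqref{250} with exponents $\al$ and $2\al-1$ (so that $\min=\al$, as $\al>2$) upgrades this to \eqref{c:21}, and one further integration yields \eqref{c:182}.

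For the sharp asymptotics I would split, for $x\gg1$,
$$
u'(x)=3K_\al(x)\int_{\rone}(1-u^2(y))u'(y)\,dy+3\int_{\rone}\big(K_\al(x-y)-K_\al(x)\big)(1-u^2(y))u'(y)\,dy.
$$
Since $u$ is odd with $u(\pm\infty)=\pm1$, the total mass $\int_{\rone}(1-u^2)u'\,dy=[u-\tfrac{u^3}{3}]_{-\infty}^{\infty}=\tfrac43$, so the first term is $4K_\al(x)$, and \eqref{fr:60} (with $4\cdot 2^{\al-5}=2^{\al-3}$) produces the stated coefficient in \eqref{c:26} up to $O(|x|^{-5})$. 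The correction term I would show is $O(|x|^{-5})$ by splitting at $|y|=|x|/2$: on $|y|<|x|/2$ the mean value theorem and \eqref{fr:70} give $|K_\al(x-y)-K_\al(x)|\le C|x|^{-5}|y|$, while $\int_{\rone}|y|\,|1-u^2(y)|\,|u'(y)|\,dy<\infty$ by \eqref{c:182}, \eqref{c:21}; on $|y|\ge|x|/2$ I would use $|1-u^2(y)||u'(y)|\le C(1+|y|)^{-1-2\al}$ together with $K_\al\in L^1(\rone)$ and $|K_\al(x)|\le C|x|^{-1-\al}$ to bound this part by $C(1+|x|)^{-1-2\al}+C|x|^{-1-3\al}$, which is $O(|x|^{-5})$ since $\al>2$. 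Finally \eqref{c:27} follows by integrating \eqref{c:26} from $x$ to $+\infty$; the resulting coefficient is strictly positive for $\al\in(2,4)$, which re-confirms that $u(x)>1$ for all large $x$.

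As for difficulty: inside this proposition essentially everything is a rote transcription of Proposition~\ref{prop:50}, and the only genuine point of care is that the sign structure used there — positivity of $K_\al$, monotonicity of $u$, positivity of $u'$ — is now unavailable, so the iteration must be carried out in absolute values and no conclusion of the form $u'>0$ is drawn (indeed it is false). The real work has been localized in Lemma~\ref{g:12}: producing the exact constant and the $O(|x|^{-5})$ remainder in the asymptotics of the sign-changing Green's kernel $K_\al$ for $\al\in(2,4)$ via the homogeneous-distribution calculus of Lemma~\ref{le:126}.
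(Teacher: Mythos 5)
Your proposal reproduces the paper's proof essentially step by step: the same absolute-value iteration $a_k\le C_\al\de\,a_{k-1}+C_\al 2^{-k(\al+1/2)}$ with the $|K_\al|$ decay from Lemma~\ref{g:12}, the same two-stage bootstrap via \eqref{250}, and the same $K_\al(x)\cdot\int(1-u^2)u'\,dy$ plus inner/outer remainder split for the sharp asymptotics, with all the heavy lifting correctly deferred to Lemma~\ref{g:12}. The argument is correct and matches the paper's route.
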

\begin{proof}
	Even though the proof follows almost verbatim the proof of Proposition \ref{prop:50}, we indicate the main points. 
	
	We start by writing  an integral equation for $u'$. Recall however that $u'$ is no longer positive, nor do we expect $|u(x)|<1$. In fact,  we show rigorously,  that this is not the case for large $|x|$.   
    After an estimate with absolute value, we obtain  
	\begin{eqnarray*}
		|u'(x)| &=& 3|(D^\al+2)^{-1}[(1-u^2)u']|=3\int_\rone |K_\al(x-y)||1-u^2(y)| |u'(y)| dy\leq \\
		&\leq & C_\al \int_\rone \f{1}{(1+|x-y|)^{1+\al}}|1-u^2(y) |u'(y)| dy,
	\end{eqnarray*}
As $u'\in H^\infty(\rone)$, $\lim_{x\to \infty} u'(x)=0$. Also, $\lim_{x\to \infty} u(x)=1$, so for all $\delta>0$, there is $k_0: |1-u^2(x)|<\de$, whenever $x>2^{k_0}$. 
Define again,  $a_k:=\max_{x\geq 2^k} |u'(x)|$. Taking $\max_{x\geq 2^k}$ in the inequality for $|u'(x)|$ above leads, as before, to \eqref{200}, namely 
$$
a_k \leq C_\al \de a_{k-1} +C_\al 2^{-k(\al+\f{1}{2})}.
$$
Bootstrapping this estimate, identically as in the proof of Proposition \ref{prop:50} leads to the intermediate bound \eqref{220}, $|u'(x)|\leq C(1+|x|)^{-\al-\f{1}{2}}$, which falls short of the optimal one \eqref{c:21}. However, it implies 
$$
|1-u(x)|\leq \int_x^\infty |u'(y)| dy\leq \frac{C}{(1+|x|)^{\al-\f{1}{2}}}. 
$$
Going back to the integral inequality for $|u'(x)|$ with these improved bounds, for 
$|u'(y)|, |1-u(y)|$ leads to the estimate 
$$
|u'(x)|\leq C \int_{\rone} \f{1}{(1+|x-y|)^{1+\al}} \f{1}{(1+|y|)^{2\al}} dy\leq C (1+|x|)^{-1-\al},
$$
whence the estimate \eqref{c:182} follows immediately. 

For the leading order term in $u'$, let $x>>1$ and  write as in Proposition \ref{prop:50}, 
	\begin{eqnarray*}
	u'(x) &=& 3 \int_{\rone} K_\al(x-y) (1-u^2(y)) u'(y) dy=3 K_\al(x) \int_{\rone} (1-u^2(y)) u'(y) dy+\\
	&+& 3 \int_{\rone} (K_\al(x-y)-K_\al(x))  (1-u^2(y)) u'(y) dy
\end{eqnarray*}
The first term has, due to the asymptotic \eqref{fr:60}, 
\begin{eqnarray*}
	 3 K_\al(x) \int_{\rone} (1-u^2(y)) u'(y) dy &=& 3 K_\al(x) \left(u-\f{u^3}{3}\right)|_{-\infty}^\infty=4 K_\al(x)= \\
	&=& -\f{2^{\al-3}\al(\al-1)(\al-2)}{\sqrt{\pi}} 
	\f{\Ga\left(\f{\al-1}{2}\right)}{\Ga\left(\f{4-\al}{2}\right)}|x|^{-1-\al}+O(|x|^{-5}). 
\end{eqnarray*}
The second term is an error term. Indeed, splitting the integrals 
in the regions $|y|<\f{|x|}{2}$, $|y|>\f{|x|}{2}$, we have by \eqref{c:21}, \eqref{c:182} and \eqref{fr:70}, 
$$
\int_{|y|<\f{|x|}{2}} |K_\al(x-y)-K_\al(x)|  |1-u^2(y)| |u'(y)| dy\leq C |x|^{-5} \int_{\rone} 
\f{|y|}{(1+|y|)^{2\al+1}} dy\leq C |x|^{-5}
$$
For the outer region, we estimate by 
\begin{eqnarray*}
	\int_{|y|>\f{|x|}{2}} [|K_\al(x-y)|+|K_\al(x)|]  |1-u^2(y)| |u'(y)| dy 	&\leq & 
C 	\int_{|y|>\f{|x|}{2}} \f{|K_\al(x-y)|+|x|^{-1-\al}}{(1+|y|)^{2\al+1}}dy \leq \\
&\leq &  \f{C}{(1+|x|)^{1+2\al}}. 
\end{eqnarray*}
Thus, we conclude \eqref{c:26}. For \eqref{c:27}, we have 
\begin{eqnarray*}
	u(x)=1-\int_x^\infty u'(y) dy= 1-\int_x^\infty \left(-\f{c_\al}{y^{1+\al}}+O(y^{-5})\right)dy= 1+ \f{c_\al}{\al} x^{-\al} +O(x^{-4}), 
\end{eqnarray*}
where $c_\al$ is the constant in \eqref{c:26}. Formula \eqref{c:27} follows.

\end{proof}

\section{Conclusions and Outlook}

In the present work we revisited the features of models of the
$\phi^4$ class (either of the parabolic type, as in the Allen-Cahn
case~\cite{Bartels2015}, or of the wave type~\cite{p4book}) in the
presence of a fractional spatial derivative of the Riesz type
with variable exponent $\alpha$. We presented a fundamental
distinction between the monotonic nature of the kink nonlinear
waveform in the case of $1< \alpha \leq  2$ and the non-monotonic,
unity-crossing case of $2<\alpha<4$. In each one of these
cases, we presented rigorous asymptotic estimates
of the tails of the kinks with the corresponding $\alpha$-dependent
power laws and the appropriate sharp prefactors based on our
analysis. These estimates were firmly corroborated by our
numerical computations in each of the two above scenaria. Additionally,
we offered a proof of the uniqueness of the resulting kinks, 
contigent on an eigenvalue estimate that was subsequently
confirmed numerically. Hence, we have ensured that the relevant
coherent structures are indeed unique. Moreover, we have
illustrated their asymptotic stability in the parabolic
model case example. In the wave setting, we have showcased the
spectral stability of the relevant states.

On the other hand, there are numerous emerging directions
that merit further exploration. In the realm of single
structures, while the understanding of the wave problem may be
significantly advanced, fewer results are rigorously available
for the corresponding (nonlinear) Schr{\"o}dinger case. Recall that the
latter is the one which is more directly experimentally
accessible, as per the recent work of~\cite{hoang2024observationfractionalevolutionnonlinear}.
While the two may share, practically, the same steady state problem,
the stability results need to be suitably adapted to the latter. 
Furthermore, earlier works such as that of~\cite{decker2024fractionalsolitonshomotopiccontinuation} have
examined the possibility of multi-solitary wave bound states
in the realm of $2 < \alpha < 4$. Presently, we are not
aware of any rigorous results, potentially validating 
this numerical picture. Of course, all of the above
constitute considerations chiefly applicable in one
spatial dimension. Naturally, extending relevant findings
to higher dimensional settings is an interesting topic in its own
right. Studies along these veins are presently in progress and will
be reported in future publications.


\appendix

\section{On the fractional derivative of Schwartz functions}
\begin{lemma}
	\label{app:10}
	Let $\Psi$ be a Schwartz function on $\rone$, while  $s>0$. Then, 
	\begin{equation}
		\label{ap:10} 
		|D^s \Psi(x)|\leq C_{s,\Psi} (1+|x|)^{-1-s}.
	\end{equation}
\end{lemma}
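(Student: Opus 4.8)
The plan is to represent $D^s$ as a finite-difference singular integral of high enough order and then split that integral into a near-diagonal piece and a far piece, in the same spirit as the proofs of Lemmas \ref{g:10} and \ref{g:12}. Since $D^s$ acts on $\rone$ through the Fourier multiplier $(2\pi|\xi|)^s$, the case of an even integer $s$ is trivial: then $D^s\Psi=(-\p_{xx})^{s/2}\Psi\in\cs(\rone)$, which decays faster than any power of $|x|$, so \eqref{ap:10} holds with room to spare. Henceforth I assume $s>0$ is not an even integer and let $m$ be the smallest even integer with $m>s$ (so $m-s\in(0,2)$).

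First I would record the representation
$$
D^s\Psi(x)=c_{s,m}\int_{\rone}\f{\delta_z^m\Psi(x)}{|z|^{1+s}}\,dz,
\qquad
\delta_z^m\Psi(x):=\sum_{l=0}^m(-1)^l\binom{m}{l}\Psi\!\left(x+\left(\tfrac{m}{2}-l\right)z\right),
$$
obtained by a direct Fourier computation: one has $\widehat{\delta_z^m\Psi}(\xi)=\big(2i\sin(\pi z\xi)\big)^m\hat\Psi(\xi)$, and, because $m$ is even and $0<s<m$, $\int_{\rone}|z|^{-1-s}\sin^m(\pi z\xi)\,dz=A_{m,s}|\xi|^s$ with $A_{m,s}\in(0,\infty)$; hence $c_{s,m}=(2\pi)^s/\big((2i)^mA_{m,s}\big)$ is finite and nonzero. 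The integral on the right converges absolutely for every $\Psi\in\cs(\rone)$ since, by Taylor's theorem (the $m$-th difference annihilates polynomials of degree $<m$), $|\delta_z^m\Psi(x)|\le C\min(|z|^m,1)$.

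Then, for $x\ge 4m$ (the case of large negative $x$ is symmetric, and $|x|\le 4m$ is covered by the trivial bound $\|D^s\Psi\|_{L^\infty}\le\|(2\pi|\cdot|)^s\hat\Psi\|_{L^1}<\infty$), I would split $\int_{\rone}=\int_{|z|\le x/m}+\int_{|z|>x/m}$. On the inner region every point $x+(\tfrac m2-l)z$ lies in $[x/2,3x/2]$, so by Taylor and the rapid decay of $\Psi^{(m)}$ one gets $|\delta_z^m\Psi(x)|\le C_N|z|^m x^{-N}$ for every $N$; integrating $|z|^{m-1-s}$ over $|z|\le x/m$ (finite since $m>s$) and taking $N=m+1$ yields a contribution $\le Cx^{-1-s}$. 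On the outer region, bounding $|\delta_z^m\Psi(x)|\le\sum_l\binom ml|\Psi(x+(\tfrac m2-l)z)|$, the middle term $l=m/2$ contributes $|\Psi(x)|\int_{|z|>x/m}|z|^{-1-s}\,dz\le C_Nx^{-N-s}$, while each term with $l\ne m/2$ is at most $(x/m)^{-1-s}\int_{\rone}|\Psi(x+(\tfrac m2-l)z)|\,dz=C\,x^{-1-s}\|\Psi\|_{L^1}$. Adding the pieces gives $|D^s\Psi(x)|\le C_{s,\Psi}(1+|x|)^{-1-s}$, which is \eqref{ap:10}.

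There is no genuine obstacle here; the only points requiring care are the verification that $c_{s,m}$ is finite and nonzero — which is exactly why $m$ is taken \emph{even} and strictly larger than $s$, and why even-integer $s$ must be excluded and handled separately — and the elementary bookkeeping in the two regions. Alternatively, one could argue via the homogeneous tempered distribution $K_s=\widehat{(2\pi|\cdot|)^s}$, which by Lemma \ref{le:126} coincides with a constant multiple of $|x|^{-1-s}$ away from the origin, and estimate $D^s\Psi=K_s*\Psi$ directly as in the proof of Lemma \ref{g:10}.
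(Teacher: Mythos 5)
Your proof is correct, but it takes a genuinely different route from the paper's.

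The paper works entirely on the Fourier side: writing $D^s\Psi(x)=\int|\xi|^s\hat\Psi(\xi)e^{2\pi ix\xi}\,d\xi$, it cuts the integral at $|\xi|\sim 1$ and discards the high-frequency part as Schwartz; for the low-frequency part it inserts a Littlewood--Paley partition of unity $\chi(\xi)=\chi(\xi)\sum_{j\ge -3}\psi(2^j\xi)$, observes that $|\xi|^s\psi(2^j\xi)=2^{-js}\psi_s(2^j\xi)$ with $\psi_s(z)=|z|^s\psi(z)$ Schwartz, and sums the dyadically rescaled kernels $2^{-j(1+s)}\check\psi_s(2^{-j}\cdot)$ to get the $|x|^{-1-s}$ decay. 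You instead work on the physical side, via the $m$-th order finite-difference representation $D^s\Psi(x)=c_{s,m}\int\delta_z^m\Psi(x)\,|z|^{-1-s}dz$, and split the singular integral at $|z|\sim x/m$, using Taylor's theorem in the inner region and the rapid decay of $\Psi$ in the outer one. Your route is more elementary in that it avoids the Littlewood--Paley machinery and any manipulation of rescaled kernels, at the modest cost of setting up and justifying the finite-difference formula (and of treating even-integer $s$ separately, which you correctly note is trivial). Both arguments are complete; the bookkeeping in your two-region split (the choice $N=m+1$ in the inner region, the $L^1$ bound for the off-center shifts $l\neq m/2$ in the outer region) checks out, and the verification that $A_{m,s}\in(0,\infty)$ relies on precisely the hypotheses you flag, namely $m$ even and $m>s$.
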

\begin{proof}
	For the $L^\infty$ bounds, we have 
	$$
	\sup_x |D^s \Psi(x)|\leq (2\pi)^s \int_\rone |\xi|^s |\hat{\Psi}(\xi)| d\xi<C. 
	$$
	Assume now that $|x|>1$. By the inversion formula for FT, 
	\begin{eqnarray*}
		(2\pi)^{-s} D^s \Psi(x) &=&  \int_{-\infty}^\infty |\xi|^s \hat{\Psi}(\xi) e^{2\pi i x \xi} d\xi=   \\
		&=& 
		\int_{-\infty}^\infty |\xi|^s \hat{\Psi}(\xi) (1-\chi(\xi)) e^{2\pi i x \xi} d\xi+ 
		\int_{-\infty}^\infty |\xi|^s \hat{\Psi}(\xi) \chi(\xi) e^{2\pi i x \xi} d\xi 
	\end{eqnarray*}
	Clearly, the first integral represents the inverse FT of a Schwartz function, so it has power decay of all orders, hence \eqref{ap:10} holds. 
	For the second integral,  introduce a partition of unity 
	$$
	\chi(z)=\chi(z)\chi(2^{-3} z)=\chi(z) \sum_{j=-3}^\infty\psi(2^j \xi),
	$$
	 so that 
	 	\begin{eqnarray*} 
	 	& & \int_{-\infty}^\infty |\xi|^s \hat{\Psi}(\xi) \chi(\xi) e^{2\pi i x \xi} d\xi= \sum_{j=0}^\infty 	\int_{\rone}   |\xi|^s \psi(2^j \xi)  \hat{\Psi}(\xi) \chi(\xi) e^{2\pi i x \xi} d\xi=\\
	 	&=& \sum_{j=-3}^\infty 	2^{-js} \int_{\rone}  \psi_s(2^j \xi)  \hat{\Psi}(\xi) \chi(\xi) e^{2\pi i x \xi} d\xi=
	 	\left(\sum_{j=-3}^\infty 	2^{-j(1+s)} \check{\psi}_s(2^{-j}\cdot)\right)*\Psi*\check{\chi}.
	 \end{eqnarray*}
	where $\psi_s(z):=\psi(z) |z|^s$ is a Schwartz function as well.   $\Psi*\check{\chi}$ is Schwartz, so it has power decay of all orders.  We have that for all integers $N$ (one should select say $N=[s]+10$), 
	$$
	\sum_{j=-3}^\infty 	
	2^{-j(1+s)} |\check{\psi}_s(2^{-j}x)|\leq C \sum_{j=-3}^\infty 	
	2^{-j(1+s)}  \left(\f{1}{1+\f{|x|}{2^j}}\right)^N\leq C |x|^{-1-s}.
	$$
	The result now follows from \eqref{250}. 
\end{proof}
	\section{Decay rate and asymptotics of the fractional Green's function: Proof of Lemma \ref{g:10} and Lemma \ref{g:11}}
	
 We present the proof for Lemma \ref{g:10}, while the proof for Lemma \ref{g:11} requires only minor modifications, which we indicate at the end. 
 
		The positivity of the kernel follows from the Laplace transform representation of the resolvent 
		$$
		(D^\al+2)^{-1}=\int_0^\infty e^{-tD^\al} e^{-2t}  dt, 
		$$
		and the fact that the $C_0$ semigroup $e^{-tD^\al}$ generated by the fractional Laplacian $-D^\alpha, \alpha\in (0,2)$ is a positive semigroup, \cite{FL}. 
		The $L^\infty$ bound is immediate, due to the estimate 
		$$
		\sup_x |K_\al(x)|\leq \int_\rone \f{1}{2+(2\pi |\xi|)^\al} d\xi <\infty.
		$$
		So, assume for the rest of the proof that $x>1$  - the case $x<-1$ is treated similarly as  $K_\al$ is an even kernel. 
		First, decompose 
		\begin{eqnarray*}
			K_\al(x) &=& \int_\rone \f{1}{2+(2\pi |\xi|)^\al} e^{2\pi i x\xi} d\xi=\\
			&=& \int_\rone \f{1}{2+(2\pi |\xi|)^\al} \chi(16 \xi)e^{2\pi i x\xi} d\xi+\int_\rone \f{1}{2+(2\pi |\xi|)^\al} (1-\chi(16 \xi))e^{2\pi i x\xi} d\xi=:K_{\al,1}(x)+K_{\al,2}(x)
		\end{eqnarray*}
		Clearly, the second integral represents the inverse Fourier transform of the   function 
		$$
		\xi\to \f{1}{2+(2\pi |\xi|)^\al} (1-\chi(16 \xi))\in C^\infty(\rone), 
		$$ 
		and in addition, its derivatives are all integrable. Thus, 
		$|K_{\al,2}(x)|\leq C_N (1+|x|)^{-N}$ for each $N$. So, it remains to analyze the first term. 
		 Due to the even integrand and support consideration, note that on the support of $\chi(16\xi)$, we have $|\xi|<1/8$. So,  $\f{(2\pi |\xi|)^\al}{2}<1$, 
		and we can represent by power  series, 
		$$
		 \f{1}{2+(2\pi |\xi|)^\al} \chi(16 \xi) = 
		 \left(\sum_{k=0}^\infty (-1)^k \f{(2\pi)^{k\al}}{2^{k+1}} |\xi|^{k\al}\right)\chi(16 \xi)=\f{1}{2}\chi(16 \xi) - \f{(2\pi)^{\al}}{4} 
		 |\xi|^{\al}\chi(16 \xi)+\ldots 
		$$
		It follows that 
		\begin{eqnarray}
			\label{m:10}
			K_{\al,2}(x) &=& 
			 \int_\rone \f{1}{2+(2\pi |\xi|)^\al} \chi(16 \xi)e^{2\pi i x\xi} d\xi =
			 \f{1}{2}  \int_\rone   \chi(16 \xi)e^{2\pi i x\xi} d\xi \\
			  \nonumber 
			  & - &  \f{(2\pi)^{\al}}{4} 
			\int_\rone   |\xi|^{\al}\chi(16 \xi) e^{2\pi i x\xi} d\xi + 
			\sum_{k=2}^\infty (-1)^k \f{(2\pi)^{k\al}}{2^{k+1}}\int_\rone |\xi|^{k\al} \chi(16 \xi)e^{2\pi i x\xi} d\xi.
		\end{eqnarray}
		The first integral represents the inverse FT of a Schwartz function, whence it has all possible power decay. We claim that the sum also has a faster decay than the one announced in \eqref{g:25}. 
		Indeed, recalling that $k\geq 2$, after reorganizing and integrating by part three times, we obtain 
		\begin{eqnarray*}
			E_k(x) &:= & \left|\int_\rone |\xi|^{k\al} \chi(16 \xi)e^{2\pi i x\xi} d\xi\right|=
			16^{-k\al-1} 	\left|\int_\rone |\eta|^{k\al} \chi(\eta)e^{\pi i x\f{\eta}{8}} d\eta\right|\\
			&\leq &  C 16^{-k\al} 	\left|\int_0^\infty \eta^{k\al} \chi(\eta)\cos(\pi  x\f{\eta}{8})d\eta\right|\leq C \f{16^{-k\al}}{x^3} 
			\left|\int_0^\infty  (\eta^{k\al} \chi(\eta))''' \sin(\pi  x\f{\eta}{8}) d\eta\right| \\
			&\leq & C \f{16^{-k\al}}{x^3} \int_\rone |(\eta^{k\al} \chi(\eta))'''| d\eta.
		\end{eqnarray*}
	Note that this last integral is integrable near zero as $|(\eta^{k\al} \chi(\eta))'''| \sim |\eta|^{k\al-3}, |\eta|<<1$ and $k\al>2\al-3>-1$. 
	
		But, we also have to worry about the sum in $k: \sum_{k=2}^\infty |E_k(x)|$. The worst term (i.e., the fastest growing in $k$) in the estimate for $\int_\rone |(\eta^{k\al} \chi(\eta))'''| d\eta$ occurs, when all three derivatives fall on $\eta^{k\al}$, which grows like $k^3$. Indeed, note that as $k\geq 2$, we have that $(\eta^{k\al})'''=k\al (k\al-1)(k\al-2)\eta^{k\al-3}$, where $k\al-3\geq 2\al-3>-1$, so it is still integrable near zero. 
        In all, 
		$$
		E_k(x)\leq C (2^{k\al} 16^{-k\al} k^3)x^{-3}=C (8^{-k\al} k^3)x^{-3}
		$$
		It follows that 
		$$
		\left|\sum_{k=2}^\infty (-1)^k \f{(2\pi)^{k\al}}{2^{k+1}}\int_\rone |\xi|^{k\al} \chi(16 \xi)e^{2\pi i x\xi} d\xi\right|\leq C x^{-3} \sum_{k=2}^\infty \f{(2\pi)^{k\al}8^{-k\al} k^3}{2^{k+1}}=C_1 x^{-3}.
		$$
		Thus, it remains to analyze the second integral in $K_{\al,2}(x)$. We have, by \eqref{126}, 
			\begin{eqnarray*}
			 	\int_\rone   |\xi|^{\al}\chi(16 \xi) e^{2\pi i x\xi} d\xi &=&   \int_\rone   |\xi|^{\al}e^{2\pi i x\xi} d\xi+ \int_\rone   |\xi|^{\al}(1-\chi(16 \xi))
			e^{2\pi i x\xi} d\xi =\\
			&=&  -\f{\pi^{-\al}}{\sqrt{\pi}} \f{\Ga\left(\f{\al+1}{2}\right)}{\Ga\left(-\f{\al}{2}\right)}|x|^{-1-\al} +\sum_{j=5}^\infty  \int_\rone   |\xi|^{\al} \psi(2^{-j} \xi)
			e^{2\pi i x\xi} d\xi=  \\
			&=& -\f{\pi^{-\al}}{\sqrt{\pi}} \f{\Ga\left(\f{\al+1}{2}\right)}{\Ga\left(-\f{\al}{2}\right)}|x|^{-1-\al} +\sum_{j=5}^\infty 2^{j(1+\al)} \hat{\psi}_\al (2^j x).
		\end{eqnarray*}
		where $\psi_\al(\xi):=|\xi|^\al \psi(\xi)$. Clearly, the sum is an error term, since for $|x|>1$ and every $N$, 
		$$
		|\sum_{j=5}^\infty 2^{j(1+\al)} \hat{\psi}_\al (2^j x)|\leq C_N  \sum_{j=5}^\infty \f{2^{j(1+\al)}}{(2^j |x|)^N} \leq C_N |x|^{-N}.
		$$
		This establishes \eqref{g:20}, because one can deploy the formula $\Ga(z+1)=z\Ga(z)$ and rewrite 
		$$
		-\f{\pi^{-\al}}{\sqrt{\pi}} \f{\Ga\left(\f{\al+1}{2}\right)}{\Ga\left(-\f{\al}{2}\right)}=
		-\f{\pi^{-\al}}{\sqrt{\pi}} \f{\Ga\left(\f{\al-1}{2}\right)\f{\al-1}{2}}{\f{\Ga\left(1-\f{\al}{2}\right)}{-\f{\al}{2} }}=-
		\f{\pi^{-\al}\al(\al-1)}{4\sqrt{\pi}} \f{\Ga\left(\f{\al-1}{2}\right)}{\Ga\left(\f{2-\al}{2}\right)},
		$$
		as in \eqref{g:20}. Regarding $K_\al$, we use the formula 
		$$
		K_\al(x)=-\f{(2\pi)^\al}{4} \int_\rone   |\xi|^{\al}\chi(16 \xi) e^{2\pi i x\xi}d\xi =\f{2^{\al-4}\al(\al-1)}{\sqrt{\pi}} \f{\Ga\left(\f{\al-1}{2}\right)}{\Ga\left(\f{2-\al}{2}\right)} +O(x^{-3}).
		$$

	Finally, regarding \eqref{g:45}, we need estimates on the following $\int_0^\infty \f{\xi}{2+(2\pi \xi)^\al} \sin(2\pi x \xi) d\xi$. After three integrations by parts, we can estimate 
	$$
	|\int_0^\infty \f{\xi}{2+(2\pi \xi)^\al} \sin(2\pi x \xi) d\xi|\leq  \f{C}{x^3} \left(\int_{|\xi|<1} \xi^{\al-2} d\xi+
	\int_{|\xi|\geq 1} \xi^{-\al-2} d\xi\right)\leq C |x|^{-3}. 
	$$
	\subsection{Proof of Lemma \ref{g:11}}
	For the proof of Lemma \ref{g:11}, we have already discussed the positivity of the kernel $\tilde{K}_\al$. For the $L^\infty$ bound, we have 
	$$
	\sup_x |\tilde{K}_\al(x)|\leq \int_\rone \f{1}{2+4\pi^2 (1-c^2)\xi^2+ (2\pi |\xi|)^\al} d\xi <\infty.
	$$
	Assume that $x>1$. We consider the difference 
	\begin{eqnarray*}
		K_\al(x)-\tilde{K}_\al(x)=4\pi^2(1-c^2) \int_{\rone} \f{\xi^2}{(2+4\pi^2(1-c^2)\xi^2+(2\pi)^\al \xi^\al)(2+(2\pi)^\al \xi^\al)} \cos(2\pi \xi x) d\xi.
	\end{eqnarray*}
The expression on the right allows four times integration by parts, as 
$$
|\left(\f{\xi^2}{(2+4\pi^2(1-c^2)\xi^2+(2\pi)^\al \xi^\al)(2+(2\pi)^\al \xi^\al)}\right)^{''''}|\leq \f{C}{\xi^{2-\al}(1+\xi^{2+\al})},
$$
	which is integrable. So, 
	$$
		|K_\al(x)-\tilde{K}_\al(x)|\leq C (1+|x|)^{-4}, 
	$$
	well within the error range. It follows that the asymptotic for $\tilde{K}_\al$ are the same as those for $K_\al(x)$. 
	
	\subsection{Proof of Lemma \ref{g:12}}
	The proof of the bound \eqref{fr:60} proceeds similarly to the bound \eqref{g:20}. Indeed, we first split $K_\al=K_{\al,1}+K_{\al,2}$. After the realization that $K_{\al,2}$ has integrable derivatives of all orders (and hence part of the error term), we perform an identical power series expansion (as in the proof of  Lemma \ref{g:10}), which yields the representation \eqref{m:10}. The first function there is Schwartz, the main term is again, 
	\begin{equation}
		\label{fr:100} 
		 -\f{(2\pi)^{\al}}{4} 
		\int_\rone   |\xi|^{\al}\chi(16 \xi) e^{2\pi i x\xi} d\xi 
	\end{equation}
	while the term $\sum_{k=2}^\infty (-1)^k \f{(2\pi)^{k\al}}{2^{k+1}} \int_\rone |\xi|^{k\al} \chi(16 \xi)e^{2\pi i x\xi} d\xi$ again  is an error term.   Indeed, 
	we have, after reorganizing and five integration by parts, 
\begin{eqnarray*}
	|E_k(x)| &= & \left|\int_\rone |\xi|^{k\al} \chi(16 \xi)e^{2\pi i x\xi} d\xi\right|=
	16^{-k\al-1} 	\left|\int_\rone |\eta|^{k\al} \chi(\eta)e^{\pi i x\f{\eta}{8}} d\eta\right|\\
	&\leq &  C 16^{-k\al} 	\left|\int_0^\infty \eta^{k\al} \chi(\eta)\cos(\pi  x\f{\eta}{8})d\eta\right|\leq C \f{16^{-k\al}}{x^5} 
	\left|\int_0^\infty  (\eta^{k\al} \chi(\eta))''''' \sin(\pi  x\f{\eta}{8}) d\eta\right| \\
	&\leq & C \f{16^{-k\al}}{x^5} \int_\rone |(\eta^{k\al} \chi(\eta))'''''| d\eta\leq 
	C \f{8^{-k\al} k^5}{x^5},
\end{eqnarray*}
where we have used $||(\eta^{k\al} \chi(\eta))'''''|\leq C k^5 |\eta|^{k\al-5}$ and  $k\al-5\geq 2\al-5>-1$, which makes this function inegrable near zero.  Thus, 
$$
\sum_{k=2}^\infty |\f{(2\pi)^{k\al}}{2^{k+1}} E_k(x)|\leq C |x|^{-5}, 
$$
	well within the margin of error. 	It remains to extract the leading order term out of \eqref{fr:100}. 
	
	As in the case $\al\in (1,2)$, we have the formula
$$
		\int_\rone   |\xi|^{\al}\chi(16 \xi) e^{2\pi i x\xi} d\xi =  \f{\pi^{-\al}}{\sqrt{\pi}}  \f{\Ga\left(\f{\al+1}{2}\right)}{\Ga\left(-\f{\al}{2}\right)} +O(|x|^{-N}), 
$$
	Reworking the denominator, we obtain
	\begin{eqnarray*}
			K_\al(x) &=& -\f{(2\pi)^\al}{4} 	\int_\rone   |\xi|^{\al}\chi(16 \xi) e^{2\pi i x\xi} d\xi+O(|x|^{-5})= \\
			&=& -\f{2^{\al-5}\al(\al-1)(\al-2)}{\sqrt{\pi}} 
			\f{\Ga\left(\f{\al-1}{2}\right)}{\Ga\left(\f{4-\al}{2}\right)}|x|^{-1-\al} +O(|x|^{-5}).
	\end{eqnarray*}

	\section{Proof of Lemma \ref{le:45}}
	Define 
	$$
	X_0=\{f\in H^s(\rn): \dpr{f}{\phi'}=0\}.
	$$
	Consider a function 
	$$
	{\mathbb G}(w,v,\si)(t,x)=\phi(x+\si(t))-\phi(x)+v(t,x)-w(t,x).
	$$
	We would like to show that for every small $\|w\|_{H^s}<<1$, one can find $v=v_w\in X_0, \si=\si_w\in C$.
	
	To this end,  we show that $G:C^1([0,T], H^s)\times C([0,]), X_0)\times C^1[0,T]\to C([0,]), H^s)$. Indeed,  one can write 
	 $$
	 	{\mathbb G}(w,v,\si) = \si(t)\int_0^1 \phi'(x+\tau \si(t)) d\tau +v(t,x)-w(t,x)\in C([0,T], H^s). 
	 $$
	 Also ${\mathbb G}(0,0,0)=0$.  By the implicit function theorem, it suffices to prove that 
	 $$
	 d{\mathbb G}(0,0,0) (\tilde{v}, \tilde{\si})=\phi'(x) \tilde{\si}(t)+\tilde{v}(t,x)
	 $$
	is an isomorphism on $C^1([0,T], H^s)$. Specifically, for any $h\in C^1([0,T], H^s)$, we need to resolve the relation 
	\begin{equation}
		\label{700} 
			\phi'(x) \tilde{\si}(t)+\tilde{v}(t,x)=h(t,x).
	\end{equation}
	By taking inner product with $\phi'$ and taking $P_{\{\phi'\}^\perp}$ respectively,  we obtain the unique solution 
	$$
	\tilde{\si}(t)=\|\phi'\|^{-2} \dpr{h(t, \cdot)}{\phi'}, \ \ \tilde{v}=P_{\{\phi'\}^\perp}[h].
	$$
	Moreover, 
	\begin{eqnarray*}
	& & 		\max_{0\leq t\leq T}\|\tilde{\si}(t)\|\leq \|\phi'\|^{-1} \max_{0\leq t\leq T} \|h(t, \cdot)\|, 
		\max_{0\leq t\leq T}\|\tilde{\si'}(t)\|\leq \|\phi'\|^{-1} \max_{0\leq t\leq T} \|h'(t, \cdot)\| \\ 
		& & \max_{0\leq t\leq T} \|\tilde{v}(t, \cdot)\|_{H^s}\leq (1+ \|\phi'\|^{-1}) 
		 \max_{0\leq t\leq T} \|h(t, \cdot)\|_{H^s},  \\
		 & & \max_{0\leq t\leq T} \|\p_t \tilde{v}(t, \cdot)\|_{H^s}\leq (1+ \|\phi'\|^{-1}) 
		 \max_{0\leq t\leq T} \|\p_t h(t, \cdot)\|_{H^s}.
	\end{eqnarray*}
Conversely, for any $\tilde{\si}\in  C^1[0,T], \tilde{v}\in C^1([0,T], H^s)$, we obtain a unique $h$ through the formula \eqref{700}, and also 
$$
\|h\|_{ C^1([0,T], H^s)}\leq \|\phi\| \|\tilde{\si}\|_{C^1[0,T]}+ \|\tilde{v}\|_{C^1([0,T], X_0)},
$$
	as required.

 \bibliographystyle{plain}  
\bibliography{refs1}  

\begin{thebibliography}{10}

\bibitem{AkhmedievOC1994}
N.N. Akhmediev, A.V. Buryak, and M.~Karlsson.
\newblock Radiationless optical solitons with oscillating tails.
\newblock {\em Optics Communications}, 110(5):540 -- 544, 1994.

\bibitem{BandaraPRA2021}
Ravindra~I. Bandara, Andrus Giraldo, Neil G.~R. Broderick, and Bernd Krauskopf.
\newblock Infinitely many multipulse solitons of different symmetry types in
  the nonlinear schr\"odinger equation with quartic dispersion.
\newblock {\em Phys. Rev. A}, 103:063514, Jun 2021.

\bibitem{Bartels2015}
S{\"o}ren Bartels.
\newblock {\em The Allen--Cahn Equation}, pages 153--182.
\newblock Springer International Publishing, Cham, 2015.

\bibitem{BCN2}
H.~Berestycki, L.~A. Cafarelli, and L.~Nirenberg.
\newblock Monotonicity for elliptic equations in unbounded lipschitz domains.
\newblock {\em Comm. Pure Appl. Math.}, 50:1089--1111, 1997.

\bibitem{BlancoRedondoNC2016}
Andrea Blanco-Redondo, C.~Martijn de~Sterke, J.E. Sipe, Thomas~F. Krauss,
  Benjamin~J. Eggleton, and Chad Husko.
\newblock Pure-quartic solitons.
\newblock {\em Nature Communications}, 7:10427, 2016.

\bibitem{CW}
F.~M. Christ and M.~Weinstein.
\newblock Dispersion of small amplitude solutions of the generalized
  korteweg-de vries equation.
\newblock {\em J. Funct. Anal.}, 100(1):87--109, 1991.

\bibitem{p4book}
J.~Cuevas-Maraver and P.~G. Kevrekidis~(eds.).
\newblock {\em A dynamical perspective on the $\phi^4$ model}.
\newblock Nonlinear Systems and Complexity. Springer International Publishing,
  1st edition, 2019.

\bibitem{decker2024fractionalsolitonshomotopiccontinuation}
Robert~J. Decker, A.~Demirkaya, T.~J. Alexander, G.~A. Tsolias, and P.~G.
  Kevrekidis.
\newblock Fractional solitons: A homotopic continuation from the biharmonic to
  the harmonic $\phi^4$ model, 2024.

\bibitem{DeckerCNSNS2021}
Robert~J. Decker, A.~Demirkaya, P.G. Kevrekidis, Digno Iglesias, Jeff Severino,
  and Yonatan Shavit.
\newblock Kink dynamics in a nonlinear beam model.
\newblock {\em Communications in Nonlinear Science and Numerical Simulation},
  97:105747, 2021.

\bibitem{DeckerJPA2020}
Robert~J Decker, A~Demirkaya, N~S Manton, and P~G Kevrekidis.
\newblock Kink{\textendash}antikink interaction forces and bound states in a
  biharmonic $\phi$ 4 model.
\newblock {\em Journal of Physics A: Mathematical and Theoretical},
  53(37):375702, aug 2020.

\bibitem{DSV}
S.~Dipierro, N.~Soave, and E.~Valdinoci.
\newblock On fractional elliptic equations in lipschitz sets and epigraphs:
  regularity, monotonicity and rigidity results.
\newblock {\em Math. Ann.}, 369:1283--1326, 2017.

\bibitem{FL}
R.~Frank and E.~Lenzmann.
\newblock Uniqueness of non-linear ground states for fractional laplacians in
  $\mathbb{R}^1$.
\newblock {\em Acta Math.}, 210(2):261--318, 2013.

\bibitem{Graf}
L.~Grafakos.
\newblock {\em Classical Fourier Analysis}, volume 249 of {\em Graduate Texts
  in Mathematics}.
\newblock Springer, New York, second edition, 2008.

\bibitem{hoang2024observationfractionalevolutionnonlinear}
Van~Thuy Hoang, Justin Widjaja, Y.~Long Qiang, Maxwell Liu, Tristram~J.
  Alexander, Antoine F.~J. Runge, and C.~Martijn de~Sterke.
\newblock Observation of fractional evolution in nonlinear optics, 2024.

\bibitem{IonescuCNSNS2017}
C.~Ionescu, A.~Lopes, D.~Copot, J.~A.~T. Machado, and J.~H.~T. Bates.
\newblock The role of fractional calculus in modeling biological phenomena:
  {{A}} review.
\newblock 51:141--159.

\bibitem{KarlssonOC1994}
Magnus Karlsson and Anders H\"{o}\"{o}k.
\newblock Soliton-like pulses governed by fourth order dispersion in optical
  fibers.
\newblock {\em Optics Communications}, 104(4):303 -- 307, 1994.

\bibitem{KarpmanPLA1994}
V.I. Karpman.
\newblock Solitons of the fourth order nonlinear schr\"{o}dinger equation.
\newblock {\em Physics Letters A}, 193(4):355--358, 1994.

\bibitem{cuevas}
P.G. Kevrekidis and J.C. Cuevas.
\newblock {\em Fractional Dispersive Models and Applications}.
\newblock Springer Nature, Berlin, 2024.

\bibitem{LePel}
Uyen Le and Dmitry~E. Pelinovsky.
\newblock Green's function for the fractional kdv equation on the periodic
  domain via mittag-leffler function.
\newblock {\em Fract. Calc. Appl. Anal.}, 24(5):1507–1534, 2021.

\bibitem{Li}
D.~Li.
\newblock On kato-ponce and fractional leibnitz.
\newblock {\em Rev. Mat. Iberoam.}, 35(1):23--100, 2019.

\bibitem{malomed}
Shilong Liu, Yingwen Zhang, Boris~A. Malomed, and Ebrahim Karimi.
\newblock Experimental realisations of the fractional schrödinger equation in
  the temporal domain.
\newblock {\em Nature Communications}, 14(1):222, 2023.

\bibitem{Longhi:15}
Stefano Longhi.
\newblock Fractional schr\&\#xf6;dinger equation in optics.
\newblock {\em Opt. Lett.}, 40(6):1117--1120, Mar 2015.

\bibitem{Mihalache2021}
Dumitru Mihalache.
\newblock Localized {{Structures}} in {{Optical}} and {{Matter-Wave Media}}:
  {{A Selection}} of {{Recent Studies}}.
\newblock {\em Romanian Reports in Physics}, 73(2):403, 2021.

\bibitem{ming2019application}
Hao Ming, JinRong Wang, and Michal Fe{\v{c}}kan.
\newblock {The application of fractional calculus in Chinese economic growth
  models}.
\newblock {\em Mathematics}, 7(8):665, 2019.

\bibitem{muslih2010riesz}
Sami~I Muslih and Om~P Agrawal.
\newblock Riesz fractional derivatives and fractional dimensional space.
\newblock {\em International Journal of Theoretical Physics}, 49(2):270--275,
  2010.

\bibitem{aceves}
Ross Parker and Alejandro Aceves.
\newblock Multi-pulse solitary waves in a fourth-order nonlinear schrödinger
  equation.
\newblock {\em Physica D: Nonlinear Phenomena}, 422:132890, 2021.

\bibitem{podlubny1999fractional}
Igor Podlubny.
\newblock {\em Fractional differential equations: an introduction to fractional
  derivatives, fractional differential equations, to methods of their solution
  and some of their applications}, volume 198.
\newblock Elsevier, 1998.

\bibitem{qureshi2020real}
Sania Qureshi.
\newblock Real life application of caputo fractional derivative for measles
  epidemiological autonomous dynamical system.
\newblock {\em Chaos, Solitons \& Fractals}, 134:109744, 2020.

\bibitem{RungeNP2020}
Antoine F.~J. Runge, Darren~D. Hudson, Kevin K.~K. Tam, C.~Martijn de~Sterke,
  and Andrea Blanco-Redondo.
\newblock The pure-quartic soliton laser.
\newblock {\em Nature Photonics}, 14(8):492--497, Aug 2020.

\bibitem{Samko}
S.G. Samko, A.A. Kilbas, and O.I. Marichev.
\newblock {\em Fractional integrals and derivatives. Theory and applications}.
\newblock Gordon and Breach, Amsterdam, 1993.

\bibitem{singh2018fractional}
Jagdev Singh, Devendra Kumar, Zakia Hammouch, and Abdon Atangana.
\newblock A fractional epidemiological model for computer viruses pertaining to
  a new fractional derivative.
\newblock {\em Applied Mathematics and Computation}, 316:504--515, 2018.

\bibitem{TamOL2019}
Kevin K.~K. Tam, Tristram~J. Alexander, Andrea Blanco-Redondo, and C.~Martijn
  de~Sterke.
\newblock Stationary and dynamical properties of pure-quartic solitons.
\newblock {\em Opt. Lett.}, 44(13):3306--3309, Jul 2019.

\bibitem{TamPRA2020}
Kevin K.~K. Tam, Tristram~J. Alexander, Andrea Blanco-Redondo, and C.~Martijn
  de~Sterke.
\newblock Generalized dispersion kerr solitons.
\newblock {\em Phys. Rev. A}, 101:043822, Apr 2020.

\bibitem{tarasov2006continuous}
Vasily~E Tarasov.
\newblock Continuous limit of discrete systems with long-range interaction.
\newblock {\em Journal of Physics A: Mathematical and General}, 39(48):14895,
  2006.

\bibitem{TsoliasJPA2021}
G~A Tsolias, Robert~J Decker, A~Demirkaya, Tristram~J Alexander, and P~G
  Kevrekidis.
\newblock Kink{\textendash}antikink interaction forces and bound states in a
  $\phi$ 4 model with quadratic and quartic dispersion.
\newblock {\em Journal of Physics A: Mathematical and Theoretical},
  54(22):225701, may 2021.

\bibitem{TSOLIAS2023107362}
G.A. Tsolias, Robert~J. Decker, A.~Demirkaya, T.J. Alexander, Ross Parker, and
  P.~G. Kevrekidis.
\newblock Kink–antikink interaction forces and bound states in a nonlinear
  schrödinger model with quadratic and quartic dispersion.
\newblock {\em Communications in Nonlinear Science and Numerical Simulation},
  125:107362, 2023.

\bibitem{chen}
L.~Wu and W.~Chen.
\newblock Monotonicity of solutions for fractional equations with de giorgi
  type nonlinearities.
\newblock {\em SCIENTIA SINICA Mathematica}, 50:1--22, 2020.

\end{thebibliography}

  

\end{document}